\tikzset{
  commutative diagrams/.cd,
  arrow style=tikz,
  diagrams={line width=.54pt},
  labels={font=\footnotesize},
}
\theoremstyle{definition}
\newtheorem{definition}{Definition}[section]
\newtheorem{example}[definition]{Example}
\newtheorem{question}[definition]{Question}
\newtheorem{remark}[definition]{Remark}
\theoremstyle{plain}
\newtheorem{proposition}[definition]{Proposition}
\newtheorem{lemma}[definition]{Lemma}
\newtheorem{theorem}[definition]{Theorem}
\newtheorem{corollary}[definition]{Corollary}
\numberwithin{equation}{section}
\def\fullref#1#2{%
  {#1\ \penalty 200\relax\ref{#2}}%
}
\newcommand{\gp}{\mathcal{P}}
\newcommand{\bbe}{\mathbb{E}}
\newcommand{\bbp}{\mathbb{P}}
\newcommand{\bbq}{\mathbb{Q}}
\newcommand{\bbr}{\mathbb{R}}
\newcommand{\bbs}{\mathbb{S}}
\newcommand{\bbf}{\mathbb{F}}
\newcommand\fdtpath[1]{#1}
\DeclarePairedDelimiter{\abs}{\lvert}{\rvert}
\DeclarePairedDelimiter{\parens}{\lparen}{\rparen}
\DeclarePairedDelimiter{\set}{\{}{\}}
\DeclarePairedDelimiterX{\gset}[2]{\{}{\}}{\,#1:#2\,}
\newcommand{\defterm}[1]{\textit{#1}}
\newcommand{\fsa}[1]{\mathcal{#1}}
\newcommand{\cgen}[1]{#1^{\#}}
\newcommand{\pres}[2]{\left\langle #1\:|\:#2 \right\rangle}
\newcommand{\lb}{\langle}
\newcommand{\rb}{\rangle}
\newcommand{\thue}{\leftrightarrow^*}
\newcommand{\nset}{\mathbb{N}}
\newcommand{\emptyword}{\varepsilon}
\newcommand{\rel}[1]{\mathcal{#1}}
\newcommand{\ZM}{{\mathbb{Z}M}}
\newcommand{\imreduces}{\rightarrow}
\newcommand{\reduces}{\rightarrow^*}
\newcommand{\reducesrev}{\leftarrow^*}
\newcommand{\equivrl}{\leftrightarrow}
\newcommand{\rev}{\mathrm{rev}}
\newcommand\fcrs{{\normalfont\scshape fcrs}}
\newcommand\fdt{{\normalfont\scshape fdt}}
\newcommand\nonfdt{{\normalfont\scshape nonfdt}}
\newcommand\biauto{{\normalfont\scshape biauto}}
\newcommand\auto{{\normalfont\scshape auto}}
\newcommand\nonauto{{\normalfont\scshape nonauto}}
\newcommand\aprop{{\normalfont\scshape a}}
\newcommand\bprop{{\normalfont\scshape b}}
\newcommand\cprop{{\normalfont\scshape c}}
\newcommand\dprop{{\normalfont\scshape d}}
\newcommand{\monly}{{\mathsf{M}}}
\newcommand{\m}[2]{\monly_{\text{#2}}^{\text{#1}}}
\newcommand{\powerset}{\mathbb{P}}
\let\cal\mathcal
\begin{document}

\title[On homogeneous monoids]{On finite complete rewriting systems, finite derivation type, and automaticity for
  homogeneous monoids}

\author{Alan J. Cain}
\address{%
Centro de Matem\'{a}tica e Aplica\c{c}\~{o}es\\
Faculdade de Ci\^{e}ncias e Tecnologia\\
Universidade Nova de Lisboa\\
2829--516 Caparica\\
Portugal
}
\email{%
a.cain@fct.unl.pt
}
\thanks{The first author was supported by an {\sc FCT} Ci\^{e}ncia 2008 fellowship and later by an Investigador
  {\sc FCT} fellowship ({\sc IF}/01622/2013/{\sc CP}1161/{\sc CT}0001).}

\author{Robert D. Gray}
\address{%
School of Mathematics\\
University of East Anglia\\
Norwich NR4 7TJ\\
United Kingdom
}
\email{%
Robert.D.Gray@uea.ac.uk
}
\thanks{The second author was partially supported by the {\scshape EPSRC} grant EP/N033353/1 `Special inverse
  monoids: subgroups, structure, geometry, rewriting systems and the word problem'.}

\author{Ant\'{o}nio Malheiro}
\address{%
Departamento de Matem\'{a}tica \&\ Centro de Matem\'{a}tica e Aplica\c{c}\~{o}es\\
Faculdade de Ci\^{e}ncias e Tecnologia\\
Universidade Nova de Lisboa\\
2829--516 Caparica\\
Portugal}
\email{%
ajm@fct.unl.pt
}
\thanks{This work was partially supported by the Funda\c{c}\~{a}o para a
Ci\^{e}ncia e a Tecnologia
(Portuguese   Foundation   for   Science   and   Technology)
through   the project
UID/MAT/00297/2013 (Centro de Matem\'{a}tica e Aplica\c{c}\~{o}es).}

\begin{abstract}
  This paper investigates the class of finitely presented monoids defined by homogeneous (length-preserving) relations
  from a computational perspective. The properties of admitting a finite complete rewriting system, having finite
  derivation type, being automatic, and being biautomatic are investigated for this class of monoids. The first main
  result shows that for any consistent combination of these properties and their negations, there is a homogeneous monoid
  with exactly this combination of properties. We then introduce the new concept of abstract Rees-commensurability (an
  analogue of the notion of abstract commensurability for groups) in order to extend this result to show that the same
  statement holds even if one restricts attention to the class of $n$-ary homogeneous monoids (where every side of every
  relation has fixed length $n$). We then introduce a new encoding technique that allows us to extend the result
  partially to the class of $n$-ary multihomogenous monoids.
\end{abstract}

\maketitle

\section{Introduction}
\label{sec_intro}

Numerous interesting algebras arise as semigroup algebras $K[S]$, where $K$ is a field and $S$ is a homogeneous
semigroup (that is, a semigroup that is defined by a presentation where all relations are length-preserving); examples
include algebras yielding set-theoretic solutions to the Yang--Baxter equation and quadratic algebras of skew type (see
for example \cite{Etingof1999, JespersBook2007, Cedo2010(3)} and \cite{cedo_alternatingtype, Gateva-Ivanova2003, Jespers2006}),
algebras related to Young diagrams, representation theory and algebraic combinatorics such as the plactic and Chinese
algebras (see \cite[Ch.~5]{lothaire_algebraic}, \cite{cedo_plactic, lascoux_plaxique} and \cite{cassaigne_chinese,
  jaszunska_chinese, cedo_minimal}), and algebras defined by permutation relations (see \cite{cedo_permutation, cedo_alternatingtype,
  cedo_abeliantype1}). In these examples, there are strong connections between the structure of the algebra $K[S]$ and that of
the underlying semigroup $S$.  Further motivation for studying this class comes from other important semigroups in the
literature that admit homogeneous presentations, such as the hypoplactic monoid \cite{novelli_hypoplactic}, shifted
plactic monoid \cite{serrano_shifted}, monoids with the same multihomogeneous growth as the plactic monoid
\cite{duchamp_placticgrowth}, trace monoids \cite{Diekert1997}, divisibility monoids \cite{Kuske2001}, queue monoids
\cite{huschenbett_queue}, and positive braid monoids \cite{Birman1998, dehornoy_gaussiangarside}.

When investigating a semigroup $S$ defined by homogeneous relations, and its associated semigroup algebra $K[S]$, a
useful first step is to find a good set of normal forms (canonical representatives over the generating set) for the
elements of the monoid, and thus for elements of the algebra. (See the list of open problems in
\cite[Section~3]{cedo_permutation} for more on the importance of this problem in the context of semigroups defined by
permutation relations.)  Specifically we would like a set of normal forms that is a regular language, and we want to be
able to compute effectively with these normal forms. Two situations where such a good set of normal forms does exist are
for monoids that admit presentations by finite complete rewriting systems (see \cite{book_srs}), and for monoids and
semigroups that are automatic (see \cite{epstein_wordproc,campbell_autsg}). Each of these properties also has
implications for properties of the corresponding semigroup algebra. Indeed, if the semigroup admits a finite complete
rewriting system, then the semigroup algebra admits a finite Gr\"{o}bner--Shirshov basis (see \cite{heyworth_rewriting}
for an explanation of the connection between Gr\"obner--Shirshov bases and complete rewriting systems), while the
automaticity of the semigroup implies that the algebra is an automaton algebra in the sense of Ufnarovskij; see
\cite{Ufnarovskij} and \cite[Section~1]{cedo_alternatingtype}.

Many of the examples of homogeneous semigroups mentioned above have been shown to admit presentations by finite complete
rewriting systems, and have been shown to be biautomatic; see for example \cite{cgm_plactic, cgm_chineseetc,
  cedo_grobner, guzelkarpuz_complete, kubat_grobner, chen_grobner}.  It is natural to ask to what extent these results
generalise to arbitrary homogeneous semigroups. One can ask: Does every homogeneous semigroup admit a presentation by a
finite complete rewriting system?  Is every such semigroup biautomatic? Within the class of homogeneous semigroups, what
is the relationship between admitting a finite complete rewriting system and being biautomatic? (For general semigroups,
these properties are independent; see \cite{otto_automonversus}). The aim of this paper is to make a comprehensive
investigation of these questions. In fact, we shall consider two different strengths of automaticity, called
automaticity and biautomaticity, and we shall also investigate the homotopical finiteness property of finite derivation
type (FDT) in the sense of Squier \cite{squier_finiteness}, which is a finiteness property that is satisfied by monoids that
admit presentations by finite complete rewriting systems (full definitions of all of these concepts will be given in
\fullref{Section}{sec_prelims}).

There are various degrees of homogeneity that one can impose on a semigroup presentation. We shall consider finite
presentations $\pres{A}{\rel{R}}$ which are:
\begin{itemize}
\item homogeneous: relations are length-preserving;
\item multihomogeneous: for each letter $a$ in the alphabet $A$, and for every relation $u=v$ in $\rel{R}$, the number
  of occurrences of the letter $a$ in $u$ equals the number of occurrences of the letter $a$ in $v$;
\item $n$-ary homogeneous: there is a fixed global constant $n$ such that for every relation $u=v$ in $\rel{R}$ the
  lengths of the words $u$ and $v$ are both $n$;
\item $n$-ary multihomogeneous: simultaneously $n$-ary homogeneous and multihomogeneous.
\end{itemize}
Of course, the most restricted class listed here is the class of $n$-ary multihomogeneous presentations.

For brevity, we introduce the following terminology for the four properties we are interested in: a monoid is
\begin{itemize}
\item \fcrs\ if it admits a presentation via a finite complete rewriting system (with respect to some finite generating set);
\item \fdt\ if it has finite derivation type;
\item \biauto\ if it is biautomatic;
\item \auto\ if it is automatic.
\end{itemize}
We will also use the natural negated terms: non-\fcrs, non-\fdt, non-\biauto, and non-\auto.

\begin{table}[t]
  \centering
  \begin{tabular}{c@{}c@{}cc@{}c@{}cll} %
    \toprule
    \fcrs & $\implies$ & \fdt & \biauto & $\implies$ & \auto & Example                                       & See                                           \\
    \midrule
    Y     &            & Y    & Y       &            & Y     & Plactic monoid                                & \cite{cgm_plactic}                            \\
    \midrule
    Y     &            & Y    & N       &            & Y     & $\m{\fcrs}{\auto}$                            & \fullref{Example}{eg:fcrsfdtnonbiautoauto}    \\
    Y     &            & Y    & N       &            & N     & $\m{\fcrs}{\nonauto}$                         & \fullref{Example}{eg:fcrsfdtnonbiautononauto} \\
    N     &            & Y    & Y       &            & Y     & $\m{\fdt}{\biauto}$                           & \fullref{Example}{eg:nonfcrsfdtbiautoauto}    \\
    N     &            & N    & Y       &            & Y     & $\m{\nonfdt}{\biauto}$                        & \fullref{Example}{eg:nonfcrsnonfdtbiautoauto} \\
    \midrule
    N     &            & Y    & N       &            & Y     & $\m{\fcrs}{\auto}\ast\m{\fdt}{\biauto} $      & \fullref{Section}{sec_freeproducts}           \\
    N     &            & Y    & N       &            & N     & $\m{\fdt}{\biauto}\ast\m{\fcrs}{\nonauto}$    & \fullref{Section}{sec_freeproducts}           \\
    N     &            & N    & N       &            & Y     & $\m{\fcrs}{\auto}\ast\m{\nonfdt}{\biauto}$    & \fullref{Section}{sec_freeproducts}           \\
    N     &            & N    & N       &            & N     & $\m{\fcrs}{\nonauto}\ast\m{\nonfdt}{\biauto}$ & \fullref{Section}{sec_freeproducts}           \\
    \bottomrule
  \end{tabular}
  \caption{Summary of examples of homogeneous monoids exhibiting all consistent combinations of the properties \fcrs, \fdt, \biauto, and \auto. Examples with the same combinations of properties also exist in the class of $n$-ary homogeneous monoids (see \fullref{Section}{sec:extending}).}
  \label{tb:resume_properties}
\end{table}

\begin{figure}[t]
  \centering
  \begin{tikzpicture}[every node/.style={align=center}]
    \path[rounded corners,fill=lightgray,opacity=.5] (2.75,5.9)--(-1.15,2)--(1,-0.15)--(4.9,3.75)--cycle;
    \path[rounded corners,fill=lightgray,opacity=.5] (2.5,5.5)--(-1,2)--(0,1)--(3.5,4.5)--cycle;
    \path[rounded corners,fill=lightgray,opacity=.5] (1.25,5.9)--(5.15,2)--(3,-0.15)--(-0.9,3.75)--cycle;
    \path[rounded corners,fill=lightgray,opacity=.5] (1.5,5.5)--(5,2)--(4,1)--(0.5,4.5)--cycle;
    \node[rotate=-45] at (2.85,4.85) {\fcrs};
    \node[rotate=-45] at (3.68,4.68) {\fdt};
    \node[rotate=45] at (1.15,4.85) {\biauto};
    \node[rotate=45] at (0.28,4.68) {\auto};
    \node[align=center,font=\small] (1) at (2,4) {Plactic\\[-1mm] monoid};
    \node(2) at (1,3) {$\m{\fcrs}{\auto}$} edge (1);
    \node(4) at (3,3) {$\m{\fdt}{\biauto}$} edge (1);
    \node(3) at (0,2) {$\m{\fcrs}{\nonauto}$} edge (2);
    \node(5) at (2,2) {$\ast$} edge (2) edge (4);
    \node(7) at (4,2) {$\m{\nonfdt}{\biauto}$} edge (4);
    \node(6) at (1,1) {$\ast$} edge (3) edge (5);
    \node(8) at (3,1) {$\ast$} edge (5) edge (7);
    \node(9) at (2,0) {$\ast$} edge (6) edge (8);
  \end{tikzpicture}
  \caption{The semilattice showing the relationship between examples.
    By taking the free product of two examples, one obtains a new monoid
    whose properties are given by taking the logical conjunction (that
    is, the `and' operation) of corresponding properties of the original
    example monoids. This corresponds to the meet operation in this semilattice.}
  \label{fig:examplerelationships}
\end{figure}
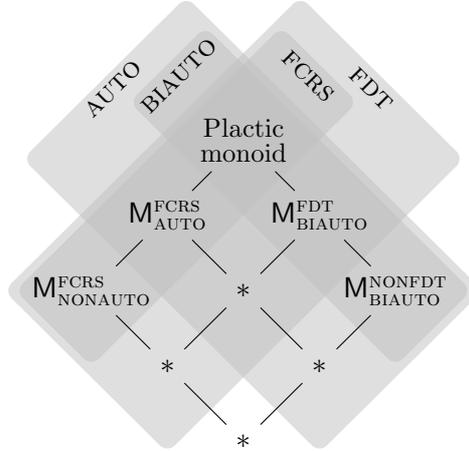

\begin{table}[t]
  \centering
  \begin{tabular}{c@{}c@{}cc@{}c@{}ccl} %
    \toprule
    \fcrs & $\implies$ & \fdt & \biauto & $\implies$ & \auto & Exists? & See                                             \\
    \midrule
    Y     &            & Y    & Y       &            & Y     & Y       & \cite{cgm_plactic}                              \\
    \midrule
    Y     &            & Y    & N       &            & Y     & Y       & \fullref{Theorem}{thm:from_homog_to_multihomog1} \\
    Y     &            & Y    & N       &            & N     & Y       & \fullref{Theorem}{thm:from_homog_to_multihomog1} \\
    N     &            & Y    & Y       &            & Y     & ?       & \fullref{Question}{question:multi_fdt_non-fcrs} \\
    N     &            & N    & Y       &            & Y     & Y       & \fullref{Theorem}{thm:from_homog_to_multihomog2} \\
    \midrule
    N     &            & Y    & N       &            & Y     & ?       & \fullref{Question}{question:multi_fdt_non-fcrs} \\
    N     &            & Y    & N       &            & N     & ?       & \fullref{Question}{question:multi_fdt_non-fcrs} \\
    N     &            & N    & N       &            & Y     & Y       & \fullref{Theorem}{thm:from_homog_to_multihomog2} \\
    N     &            & N    & N       &            & N     & Y       & \fullref{Theorem}{thm:from_homog_to_multihomog2} \\
    \bottomrule
  \end{tabular}
  \caption{Summary of the existence of examples of multihomogeneous monoids with consistent combinations of the properties
    \fcrs, \fdt, \biauto, and \auto. Examples with the same combinations of properties also exist in the class of $n$-ary
    multihomogeneous monoids.}
  \label{tbl:summarymultihomogeneous}
\end{table}

We are interested in which combinations of these properties a homogeneous monoid can have. Since in general \fcrs \
implies \fdt, and \biauto \ implies \auto, not all combinations will be possible. We refer to any combination of
properties that satisfies these restrictions, and that does not contain a property and its negation, as
\defterm{consistent}.  Our first main result shows that any consistent combination is possible within the class of
homogeneous monoids. We show this by constructing examples of homogeneous monoids with each consistent combination of
properties. We adopt the following naming scheme: in the example monoid $\m{\aprop}{\bprop}$, the superscript \aprop\
will be one of \fcrs, \fdt, or \nonfdt, indicating that the monoid is respectively \fcrs\ (and thus also \fdt), \fdt\
but not \fcrs, or non-\fdt\ (and thus also non-\fcrs); while the subscript \bprop\ will be one of \biauto, \auto, or
\nonauto, indicating that the monoid is respectively \biauto\ (and thus also \auto), \auto\ but not \biauto, or
non-\auto\ (and thus also non-\biauto). In \fullref{Section}{sec:fundexs}, we presents the fundamental examples
$\m{\fcrs}{\auto}$, $\m{\fdt}{\biauto}$, $\m{\fcrs}{\nonauto}$, and
$\m{\nonfdt}{\biauto}$. \fullref{Section}{sec_freeproducts} contains general results about the behaviour of the various
properties under free products of monoids, which we then use to construct the remaining examples. These results are
summarised in \fullref{Table}{tb:resume_properties} and the relationship between the various examples is illustrated in
\fullref{Figure}{tb:resume_properties}.

In \fullref{Sections}{sec:abstractrees} and \ref{sec:embedding} we introduce new concepts and prove new results, in
order to study the combinations of properties can occur in even more restricted classes. We first introduce and
investigate the notion of abstract Rees-commensurability (an analogue of abstract commensurability for groups
\cite[\S\S~iv.27ff.]{delaharpe_geometric}), which allows us to show that every consistent combination can arise within the
class of $n$-ary homogeneous monoids. (Thus \fullref{Table}{tb:resume_properties} and
\fullref{Figure}{tb:resume_properties} could also describe the situation for $n$-ary homogeneous monoids.) We then
develop a new encoding technique that embeds a homogeneous monoid into a $2$-generated multihomogeneous monoid. This
encoding technique allows us to obtain most of the consistent combinations of properties in the class of
multihomogeneous or $n$-ary multihomogenous monoids. Specifically, it allows us to construct $n$-ary multihomogeneous
monoids with any possile combination of the properties \fcrs, \biauto, and \auto, or any combination of the properties
\fdt, \biauto, and \auto. However, it does not allow us to construct examples to separate the properties \fcrs\ and
\fdt\ within the class of multihomogeneous or $n$-ary multihomogeneous monoids.
% The results from \fullref{Section}{sec_freeproducts} then allow
% us to use free products to construct a multihomogeneous monoid that is non-\fdt\ (and thus non-\fcrs), \auto\ but not
% \biauto, and a multihomogeneous monoid that is non-\fdt\ and non-\auto. We have been unable to prove that the remaining
% three consistent combinations of properties can arise.
\fullref{Table}{tbl:summarymultihomogeneous} summarises the known
consistent combinations of properties in the class of multihomogeneous monoids. Using the results for abstract Rees
commensurability, the same table also describes the situation for the class of $n$-ary multihomogeneous monoids.

\section{Preliminaries}
\label{sec_prelims}

The subsection below on derivation graphs, homotopy bases and finite derivation type is self-contained, but it can be
complemented with \cite{otto_properties,Kobayashi5}. There is an alternative formulation of the same concepts in terms
of strict monoidal categories/groupoids and higher-dimensional variations of them, and homotopical algebra in higher
categories \cite{Lafont1995,GM2009}. However, our approach, using Squier complexes, is the same one used in papers by
Otto \cite{otto_modular,otto_modular_tr}, Wang \cite{wang_fcrs}, Pride and the second and third authors
\cite{gray_propertiesnotinherited}, and the third author \cite{malheiro_fdtlargeideals}; we will require methods and
results from these papers in \fullref{Sections}{sec_freeproducts}, \ref{sec:abstractrees}, and \ref{sec:embedding}.

For further information on automatic semigroups, see~\cite{campbell_autsg}. We assume familiarity with basic notions of
automata and regular languages (see, for example, \cite{hopcroft_automata}) and transducers and rational relations (see,
for example, \cite{berstel_transductions}), although we will recall some key results that we use frequently.  For
background on string rewriting systems we refer the reader to \cite{baader_termrewriting,book_srs}.

\subsection{Words, rewriting systems, and presentations}
\label{subsection_words}

We denote the empty word (over any alphabet) by $\emptyword$. For an
alphabet $A$, we denote by $A^*$ the set of all words over $A$. When
$A$ is a generating set for a monoid $M$, every element of $A^*$ can
be interpreted either as a word or as an element of $M$. For words
$u,v \in A^*$, we write $u=v$ to indicate that $u$ and $v$ are equal
as words and $u=_M v$ to denote that $u$ and $v$ represent the same
element of the monoid $M$. The length of $u \in A^*$ is denoted $|u|$,
and, for any $a \in A$, the number of symbols $a$ in $u$ is denoted
$|u|_a$. We denote by $u^\rev$ the reversal of a word $u$;
that is, if $u=a_1\cdots a_{n-1}a_n$ then $u^\rev =
a_na_{n-1}\cdots a_1$, with $a_i\in A$. If $\rel{R}$ is a relation on
$A^*$, then $\cgen{\rel{R}}$ denotes the smallest monoid congruence generated by
$\rel{R}$.

We use standard terminology and notation from the theory of string
rewriting systems; see \cite{book_srs} or \cite{baader_termrewriting}
for background reading.

If $M$ is a monoid, a presentation of $M$ is a pair $\pres{A}{\rel{R}}$ such that $M$ is isomorphic to the quotient
$A^*/\cgen{\rel{R}}$, in which case, the elements of $\rel{R}$ are called the defining relations. We write $[u]_M$ for
the set of words in $A^*$ equal to $u$ in $M$. The presentation $\pres{A}{\rel{R}}$ is \defterm{homogeneous}
(respectively, \defterm{multihomogeneous}) if for every $(u,v) \in \rel{R}$ and $a \in A$, we have $|u| = |v|$
(respectively, $|u|_a = |v|_a$). That is, in a homogeneous presentation, defining relations preserve length; in a
multihomogenous presentation, defining relations preserve the number of each symbol. A monoid is \defterm{homogeneous}
(respectively, \defterm{multihomogeneous}) if it admits a homogeneous (respectively, multihomogeneous)
presentation. Note that homogeneous and multihomogeneous presentations are not required to be finite presentations.

A \defterm{string rewriting system}, or simply a \defterm{rewriting
  system}, is a pair $(A,\rel{R})$, where $A$ is a finite alphabet and
$\rel{R}$ is a set of pairs $(\ell,r)$, usually written $\ell
\imreduces r$, known as \defterm{rewriting rules} or simply
\defterm{rules}, drawn from $A^* \times A^*$. The single reduction
relation $\imreduces_{\rel{R}}$ is defined as follows: $u
\imreduces_{\rel{R}} v$ (where $u,v \in A^*$) if there exists a
rewriting rule $(\ell,r) \in \rel{R}$ and words $x,y \in A^*$ such
that $u = x\ell y$ and $v = xry$. That is, $u \imreduces_{\rel{R}} v$
if one can obtain $v$ from $u$ by substituting the word $r$ for a
subword $\ell$ of $u$, where $\ell \imreduces r$ is a rewriting
rule. The reduction relation $\reduces_{\rel{R}}$ is the reflexive and
transitive closure of $\imreduces_{\rel{R}}$. The subscript $\rel{R}$
is omitted when it is clear from context. The process of replacing a
subword $\ell$ by a word $r$, where $\ell \imreduces r$ is a rule, is
called \defterm{reduction} by application of the rule $\ell \imreduces
r$; the iteration of this process is also called reduction. A word $w
\in A^*$ is \defterm{reducible} if it contains a subword $\ell$ that
forms the left-hand side of a rewriting rule in $\rel{R}$; it is
otherwise called \defterm{irreducible}.

The rewriting system $(A,\rel{R})$ is \defterm{finite} if both $A$ and
$\rel{R}$ are finite. The rewriting system $(A,\rel{R})$ is
\defterm{noetherian} if there is no infinite sequence $u_1,u_2,\ldots$ of words from
$A^*$ such that $u_i \imreduces u_{i+1}$ for all $i \in
\nset$. That is, $(A,\rel{R})$ is noetherian if any process of
reduction must eventually terminate with an irreducible word. The
rewriting system $(A,\rel{R})$ is \defterm{confluent} if, for any
words $u, u',u'' \in A^*$ with $u \reduces u'$ and $u
\reduces u''$, the pair $u'$ and $u''$ \defterm{resolves}, that is, there
exists a word $v \in A^*$ such that $u'
\reduces v$ and $u'' \reduces v$.
It is well known that a noetherian system is confluent if and only if
all critical pairs resolve, where critical pairs are obtained by considering
overlaps of left-hand sides of the rewrite rules in $\rel{R}$; see
\cite{book_srs} for more details.
A rewriting system
that is both confluent and noetherian is \defterm{complete}.
If a monoid admits a presentation with respect to some generating set $A$
that forms a finite complete rewriting system $\rel{R}$, the monoid is \fcrs.
In
that case, the irreducible elements form a set of unique normal forms, over
$A$, for the elements of the monoid.

The \defterm{Thue congruence} $\thue_{\rel{R}}$ is the equivalence
relation generated by $\imreduces_{\rel{R}}$. The elements of the monoid
presented by $\pres{A}{\rel{R}}$ are the $\thue_{\rel{R}}$-equivalence
classes. The relations $\thue_{\rel{R}}$ and $\cgen{\rel{R}}$ coincide.

Let $M$ be a homogeneous monoid. Let $\pres{A}{\rel{R}}$ be a
homogeneous presentation of $M$. Without lost of generality, we can assume
that $\rel{R}$ has no trivial relations of the form $a=a'$, for letters
$a$, $a'$ in $A$. Since none of the generators
represented by $A$ can be non-trivially decomposed, the alphabet $A$
represents a unique minimal generating set for $M$, and any generating
set must contain this minimal generating set. Any two words over $A$
representing the same element of $M$ must be of the same length. So
there is a well-defined function $\lambda : M \to \nset$ where
$x\lambda$ is defined to be the length of any word over $A$
representing $x$.
(Here and elsewhere we shall write the function symbol on the right.)
It is easy to see that $\lambda$ is a homomorphism. Following
\cite[Definition 2.1 in \S 4]{Droste2009} the function $\lambda$ is called a
\defterm{grading} of $M$, and so homogeneous monoids are \defterm{graded
monoids}.

\subsection{Derivation graphs, homotopy bases, and finite derivation type}

Associated with any monoid presentation $\pres{A}{\rel{R}}$ is a $2$-complex $\mathcal{D}$, called the \defterm{Squier
  complex}, whose $1$-skeleton has vertex set $A^*$ and edges corresponding to applications of relations from
$\mathcal{R}$, and that has $2$-cells adjoined for each instance of ``non-overlapping'' applications of relations from
$\rel{R}$ (see below for a formal definition of non-overlapping relations). The free monoid $A^*$ acts on $\mathcal{D}$
in a natural way via left and right multiplication. A collection of closed paths in $\mathcal{D}$ is called a
\defterm{homotopy base} if the complex obtained by adjoining cells for each of these paths, and those that they generate
under the action of the free monoid on the Squier complex, has trivial fundamental groups. A monoid defined by a
presentation is said to have \defterm{finite derivation type} (or \fdt\ for short) if the corresponding Squier complex
admits a finite homotopy base. It was shown by Squier \cite{squier_finiteness} that the property \fdt\ is independent of
the choice of a finite presentation, so we may speak of \fdt\ monoids. The original motivation for studying this notion
is Squier's result \cite{squier_finiteness} which says that if a monoid admits a presentation by a finite complete
rewriting system then the monoid must have finite derivation type. The study of these concepts is motivated further by
the fact that the fundamental groups of connected components of Squier complexes, which are called \defterm{diagram
  groups}, have turned out to be a very interesting class of groups; see \cite{Guba}. In recent important work
\cite{guiraud_higher}, acyclic polygraphs have been used to define a higher-dimensional homotopical finiteness condition
for higher categories. In particular, this work gives rise to a definition of $\text{\fdt}_n$ that extends the notion of
finite derivation type to arbitrary dimensions.

In more detail, with any monoid presentation $\gp = \pres{A}{\rel{R}}$ we associate a
graph (in the sense of Serre \cite{serre_trees}) as follows. The
\defterm{derivation graph} of $\gp $ is an infinite graph $\Gamma =
\Gamma(\gp) = (V,E,\iota, \tau, ^{-1})$ with \emph{vertex set} $V =
A^*$, and \emph{edge set $E$} consisting of the collection of
$4$-tuples
\[
\{ (w_1, r, \epsilon, w_2): \ w_1, w_2 \in A^*, r \in \rel{R}, \ \mbox{and} \
\epsilon \in \{ +1, -1 \} \}.
\]
The functions $\iota, \tau : E \rightarrow V$ associate with each edge
$\bbe = (w_1, r, \epsilon, w_2)$ (with $r=(r_{+1},r_{-1}) \in \rel{R}$) its
initial and terminal vertices $\iota \bbe = w_1 r_{\epsilon} w_2$ and
$\tau \bbe = w_1 r_{- \epsilon} w_2$, respectively. The mapping $^{-1}
: E \rightarrow E$ associates with each edge $\bbe = (w_1, r,
\epsilon, w_2)$ an inverse edge $\bbe^{-1} = (w_1, r, -\epsilon,
w_2)$.

A non-empty path in $\Gamma$ is a sequence of edges $\bbp = \bbe_1 \circ \bbe_2
\circ \ldots
\circ \bbe_n$, written in the diagrammatic order, where $\tau \bbe_i = \iota
\bbe_{i+1}$ for $i=1, \ldots,
      {n-1}$. Here $\bbp$ is a path from $\iota \bbe_1$ to $\tau
      \bbe_n$ and we extend the mappings $\iota$ and $\tau$ to paths
      by defining $\iota \bbp = \iota \bbe_1$ and $\tau \bbp = \tau
      \bbe_n$.  The inverse of a path $\bbp = \bbe_1 \circ \bbe_2
      \circ \ldots \circ \bbe_n$ is the path $\bbp^{-1} = \bbe_n^{-1}
      \circ \bbe_{n-1}^{-1} \circ \ldots \circ \bbe_1^{-1}$, which is
      a path from $\tau \bbp$ to $\iota \bbp$. A \emph{closed path} is
      a path $\bbp$ satisfying $\iota \bbp = \tau \bbp$. For two paths
      $\bbp$ and $\bbq$ with $\tau \bbp = \iota \bbq$ the composition
      $\bbp \circ \bbq$ is defined.

We denote the set of paths in $\Gamma$ by $P(\Gamma)$, where for each
vertex $w \in V$ we include a path $1_w$ with no edges, called the
\emph{empty path} at $w$.
The free monoid
$A^*$ acts on both sides of the set of edges $E$ of $\Gamma$ by
\[
x \cdot \bbe \cdot y = (x w_1, r, \epsilon, w_2 y)
\]
where $\bbe = (w_1, r, \epsilon, w_2)$ and $x, y \in A^*$. This
extends naturally to a two-sided action of $A^*$ on $P(\Gamma)$ where
for a path $\bbp = \bbe_1 \circ \bbe_2 \circ \ldots \circ \bbe_n$ we
define
\[
x \cdot \bbp \cdot y = (x \cdot \bbe_1 \cdot y) \circ (x \cdot \bbe_2 \cdot y)
\circ \ldots \circ (x \cdot \bbe_n \cdot y).
\]
If $\bbp$ and $\bbq$ are paths such that $\iota \bbp = \iota \bbq$ and
$\tau \bbp = \tau \bbq$ then we say that $\bbp$ and $\bbq$ are
\emph{parallel}, and write $\bbp \parallel \bbq$. We use $\parallel$ to denote
the subset of $P(\Gamma) \times P(\Gamma)$  of all pairs
of parallel paths.

An equivalence relation $\sim$ on $P(\Gamma)$ is called a
\emph{homotopy relation} if it is contained in $\parallel$ and
satisfies the following four conditions.

\begin{figure}
\centering
\begin{tikzcd}
\null & \iota \bbe_1 \iota \bbe_2 \arrow[swap]{dl}{\bbe_1 \cdot \iota \bbe_2}  \arrow{dr}{\iota \bbe_1 \cdot \bbe_2} & \\
\tau \bbe_1 \iota \bbe_2 \arrow[swap]{dr}{\tau \bbe_1 \cdot \bbe_2} & & \iota \bbe_1 \tau \bbe_2 \arrow{dl}{\bbe_1 \cdot \tau \bbe_2} \\
& \tau \bbe_1 \tau \bbe_2 &
\end{tikzcd}
\caption{Disjoint derivations in $\Gamma$.}
\label{fig_H1}
\end{figure}
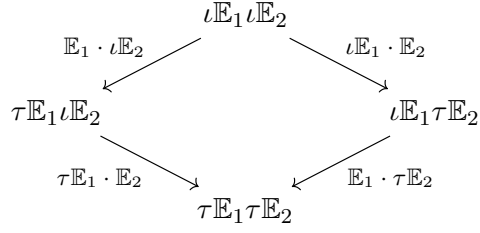

\begin{enumerate}
\item If $\bbe_1$ and $\bbe_2$ are edges of $\Gamma$, then \[
(\bbe_1 \cdot \iota \bbe_2) \circ (\tau \bbe_1 \cdot \bbe_2) \sim
(\iota \bbe_1 \cdot \bbe_2) \circ (\bbe_1 \cdot \tau \bbe_2 ).
\]
\item For any $\bbp, \bbq \in P(\Gamma)$ and $x,y \in A^*$
\[
\bbp \sim \bbq \ \ \mbox{implies} \ \  x \cdot \bbp \cdot y \sim x \cdot \bbq \cdot y.
\]
\item For any $\bbp, \bbq, \bbr, \bbs \in P(\Gamma)$ with $\tau \bbr = \iota \bbp = \iota \bbq$ and $\iota \bbs = \tau \bbp = \tau \bbq$
\[
\bbp \sim \bbq \ \ \mbox{implies} \ \ \bbr \circ \bbp \circ \bbs \sim \bbr \circ \bbq \circ \bbs.
\]
\item If $\bbp \in P(\Gamma)$ then $\bbp \bbp^{-1} \sim 1_{\iota \bbp}$, where $1_{\iota \bbp}$ denotes the empty path at the vertex $\iota \bbp$.
\end{enumerate}
The idea behind condition~1 is the following. Suppose that a word
$w$ has two disjoint occurrences of rewriting rules in the sense that
$ w = \alpha r_\epsilon \beta \alpha' r_{\epsilon'}' \beta' $ where
$\alpha, \beta, \alpha', \beta' \in A^*$, $r, r' \in R$ and $\epsilon,
\epsilon' \in \{ -1, +1 \}$. Let $\bbe_1 = (\alpha, r, \epsilon,
\beta)$ and $\bbe_2 = (\alpha', r', \epsilon', \beta')$. Then the
paths
\[
\bbp = (\bbe_1 \cdot \iota \bbe_2)\circ (\tau \bbe_1 \cdot \bbe_2),
\quad
\bbp' = (\iota \bbe_1 \cdot \bbe_2)\circ (\bbe_1 \cdot \tau \bbe_2)
\]
give two different ways of rewriting the word $w = \alpha r_\epsilon \beta \alpha' r_{\epsilon'}' \beta'$ to the word
$w = \alpha r_{-\epsilon} \beta \alpha' r_{-\epsilon'}' \beta'$, where in $\bbp$ we first apply the left-hand relation
and then the right-hand, while in $\bbp'$ the relations are applied in the opposite order; see
\fullref{Figure}{fig_H1}. We want to regard these two paths as being essentially the same, and this is achieved by
condition~1. Equivalent paths under this condition are said to be homotopic by \defterm{disjoint derivations}. This
relation is also often refereed to as the exchange relation or the interchange law in the literature; see \cite{GM2009}.

For a subset $C$ of $\parallel$, the homotopy relation \emph{$\sim_C$
  generated by $C$} is the smallest (with respect to inclusion)
homotopy relation containing $C$.
The homotopy relation generated by the empty set $\varnothing$ is
denoted by $\sim_0$.
% The relation $\sim_0 =
% \sim_\varnothing$ generated by the empty set $\varnothing$ is the
% smallest homotopy relation.
If $\sim_C$ coincides with $\parallel$,
then $C$ is called a \defterm{homotopy base} for $\Gamma$. The
presentation $\pres{A}{\rel{R}}$ is said to have \emph{finite
  derivation type} if the derivation graph $\Gamma$
of $\pres{A}{\rel{R}}$ admits a finite homotopy base.  A finitely
presented monoid $M$ is said to have \emph{finite derivation type}, or
to be \fdt, if some (and hence any by \cite[Theorem~4.3]{squier_finiteness})
finite presentation for $M$ has finite derivation type.

It is not difficult to see that a subset $C$  of $\parallel$ is a homotopy base
of $\Gamma$ if and only if the set \[\{(\bbp \circ \bbq^{-1}, 1_{\iota \bbp}):
(\bbp,\bbq) \in C \}\] is a homotopy base for $\Gamma$. Thus we say that a set
$D$ of closed paths is a homotopy base if the corresponding set $\{ (\bbp,
1_{\iota \bbp}) : \bbp \in D \}$ is a homotopy base.

\subsection{Rational relations}

For references purposes, we briefly recall here some basic definitions and results regarding rational relations; we only
consider relations of the form $R \subseteq A^* \times B^*$. The set of rational relations between $A^*$ and $B^*$ is
the smallest subset of $\powerset{A^* \times B^*}$ that contains the empty set $\emptyset$, all singleton sets
$\set{(u,v)}$, and is closed under the operations of union, product, and the Kleene star
\[
X \mapsto X^* = \bigcup_{i=0}^\infty X^i.
\]
Note that the set of rational relations is also closed under the Kleene plus operation $X \mapsto X^+ = X^*X$.

\begin{proposition}[{\cite[Examples~5.1 \& 5.5]{berstel_transductions}}]
  \label{prop:idrational}
  For any regular language $L \subseteq A^*$, the relation
  \[
    \gset{(u,u)}{u \in L} \subseteq A^* \times A^*
  \]
  is rational.
\end{proposition}

\begin{proposition}
  \label{prop:intersectionwithregular}
  Let $K \subseteq A^*$ and $L \subseteq B^*$ be regular languages. If $R \subseteq A^* \times B^*$ is a rational
  relation, then $R \cap (K \times L)$ is a rational relation. In particular, $K \times L$ is a rational relation.
\end{proposition}

\begin{proof}
  Let $\fsa{T}$ be a transducer recognizing $R$ and let $\fsa{M}$ and $\fsa{N}$ be finite automata recognizing $K$ and
  $L$ respectively. Adapt $\fsa{T}$ to simulate $\fsa{M}$ and $\fsa{N}$ on the inputs from its first and second tapes,
  respectively, and to accept only if the simulated copies of $\fsa{M}$ and $\fsa{N}$ are in accept states. This adapted
  transducer recognizes $R \cap (K \times L)$.
\end{proof}

\subsection{Automaticity and biautomaticity}

\begin{definition}
Let $A$ be an alphabet and let $\$$ be a new symbol not in $A$. Define
the mapping $\textvisiblespace^\$ : A^* \times A^* \to ((A\cup\{\$\})\times (A\cup
\{\$\}))^*$ by
\begin{align*}
&(u_1\cdots u_m,v_1\cdots v_n) \mapsto\\
&\qquad\qquad\begin{cases}
(u_1,v_1)\cdots(u_m,v_n) & \text{if }m=n,\\
(u_1,v_1)\cdots(u_n,v_n)(u_{n+1},\$)\cdots(u_m,\$) & \text{if }m>n,\\
(u_1,v_1)\cdots(u_m,v_m)(\$,v_{m+1})\cdots(\$,v_n) & \text{if }m<n,
\end{cases}
\intertext{and the mapping ${}^\$\textvisiblespace : A^* \times A^* \to ((A\cup\{\$\})\times (A\cup \{\$\}))^*$ by}
&(u_1\cdots u_m,v_1\cdots v_n) \mapsto \\
&\qquad\qquad\begin{cases}
(u_1,v_1)\cdots(u_m,v_n) & \text{if }m=n,\\
(u_1,\$)\cdots(u_{m-n},\$)(u_{m-n+1},v_1)\cdots(u_m,v_n) & \text{if }m>n,\\
(\$,v_1)\cdots(\$,v_{n-m})(u_1,v_{n-m+1})\cdots(u_m,v_n) & \text{if }m<n,
\end{cases}
\end{align*}
where $u_i,v_i \in A$.
\end{definition}

\begin{definition}
\label{def:autstruct}
Let $M$ be a finitely generated monoid. Let $A$ be a finite set
of generators for $M$ and let $L \subseteq A^*$ be a regular language such
that every element of $M$ has at least one representative in $L$.  For
each $a \in A \cup \{\emptyword\}$, define the relations
\begin{align*}
L_a &= \{(u,v): u,v \in L, {ua} =_M {v}\}\\
{}_aL &= \{(u,v) : u,v \in L, {au} =_M {v}\}.
\end{align*}
The pair $(A,L)$ is an \defterm{automatic structure} for $M$ if
$L_a^\$$ is a regular language over $(A\cup\{\$\}) \times
(A\cup\{\$\})$ for all $a \in A \cup \{\emptyword\}$. A monoid $M$ is
\defterm{automatic}, or \auto, if it admits an automatic structure
with respect to some finite generating set.

The pair $(A,L)$ is a \defterm{biautomatic structure} for $M$ if
$L_a^\$$, ${}_aL^\$$, ${}^\$L_a$, and ${}_a^\$L$ are regular
languages over $(A\cup\{\$\}) \times (A\cup\{\$\})$ for all $a \in A
\cup \{\emptyword\}$. A monoid $M$ is \defterm{biautomatic}, or
\biauto, if it admits a biautomatic structure with respect to some finite
generating set. [Note that \biauto\ implies \auto.]
\end{definition}

Hoffmann \& Thomas have made a careful study of biautomaticity for
semigroups \cite{hoffmann_biautomatic}. They distinguish four notions
of biautomaticity for semigroups that require at least one of
$L_a^\$$ and ${}^\$L_a$ and at least one of ${}_aL^\$$ and
${}_a^\$L$ to be regular. These notions are all equivalent for
groups and more generally for cancellative semigroups
\cite[Theorem~1]{hoffmann_biautomatic} but distinct for semigroups
\cite[Remark~1 \& \S~4]{hoffmann_biautomatic}. \biauto\ clearly
implies all four Hoffmann--Thomas notions of biautomaticity. However,
we shall shortly prove that, within the class of homogeneous
monoids, any of the Hoffmann--Thomas notions of biautomaticity implies
\biauto\ (see \fullref{Proposition}{prop:htbiautoimpliesbiauto}).

In proving that $R^\$$ or ${}^\$R$ is regular, where $R$ is
a relation on $A^*$, a useful strategy is to prove that $R$ is a
rational relation (that is, a relation recognized by a finite
transducer \cite[Theorem~6.1]{berstel_transductions}) and then apply
the following result, which is a combination of
\cite[Corollary~2.5]{frougny_synchronized} and
\cite[Proposition~4]{hoffmann_biautomatic}:

\begin{proposition}
\label{prop:rationalbounded}
If $R \subseteq A^* \times A^*$ is rational relation and there is a
constant $k$ such that $\bigl||u|-|v|\bigr| \leq k$ for all $(u,v) \in
R$, then $R^\$$ and ${}^\$R$ are regular.
\end{proposition}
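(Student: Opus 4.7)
The plan is to combine the two cited results by exploiting the fact that a rational relation is the behaviour of a finite transducer $T$, and the universal length-difference bound $k$ allows me to simulate $T$ by a finite automaton over the product alphabet $(A\cup\{\$\})\times (A\cup\{\$\})$ using only a bounded ``buffer'' in the state.

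First I would handle $R\rpad$. Let $T$ be a finite transducer recognising $R$, with transitions labelled by pairs $(a,b)\in (A\cup\{\emptyword\})\times (A\cup\{\emptyword\})$. Construct a finite automaton $\mathcal{A}$ whose states are pairs $(q,(\alpha,\beta))$ where $q$ is a state of $T$ and $(\alpha,\beta)\in A^{\leq k}\times A^{\leq k}$ with $\min(|\alpha|,|\beta|)=0$; the buffer $(\alpha,\beta)$ records by how much the input tape of $T$ has outrun its output tape (or vice versa) relative to what $\mathcal{A}$ has so far consumed in each coordinate. When $\mathcal{A}$ reads a letter $(a,b)$ of $(A\cup\{\$\})^2$ it nondeterministically fires a finite sequence of $T$-transitions that consume $a$ into the first buffer (unless $a=\$$) and $b$ into the second, and then cancels matching prefixes. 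The hypothesis $\bigl||u|-|v|\bigr|\leq k$ ensures that along every accepting run both buffers remain of length at most $k$, so the state set is finite. Acceptance is at states $(q,(\emptyword,\emptyword))$ with $q$ final in $T$, subject to the $\rpad$ discipline that $\$$ may only be read in one coordinate after all $A$-letters of that coordinate are exhausted. This is essentially the synchronisation argument of \cite[Corollary~2.5]{frougny_synchronized}.

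For $R\lpad$ I would appeal to \cite[Proposition~4]{hoffmann_biautomatic}, or equivalently observe that since $\bigl||u|-|v|\bigr|\leq k$, the words $(u,v)\rpad$ and $(u,v)\lpad$ differ only by a finite shift of at most $k$ padding symbols $(\$,\cdot)$ or $(\cdot,\$)$ from the suffix to the prefix (in the appropriate coordinate), an operation realised by a finite transducer on the product alphabet, and regular languages are closed under such rational transductions. Alternatively, one can rerun the construction above with $R$ replaced by the reversed relation $\{(u^\rev,v^\rev):(u,v)\in R\}$, using closure of rational relations and of regular languages under reversal.

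The main obstacle is purely the bookkeeping in defining $\mathcal{A}$: one must specify the transitions so that the asynchronous transducer $T$ is correctly simulated while reading the synchronised padded pairs, and ensure that $\$$ is read only in the permitted trailing positions. Once this is in place, finiteness of the state set follows immediately from the bound $k$, and correctness is a straightforward induction on the length of computations.
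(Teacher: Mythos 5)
The paper itself does not prove \fullref{Proposition}{prop:rationalbounded}; it simply cites it as a combination of \cite[Corollary~2.5]{frougny_synchronized} and \cite[Proposition~4]{hoffmann_biautomatic}. Your attempt to reconstruct a proof is reasonable in outline, but it has a genuine gap at the crux.

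The problematic step is the assertion that ``the hypothesis $\bigl||u|-|v|\bigr|\leq k$ ensures that along every accepting run both buffers remain of length at most $k$.'' This is false for an arbitrary transducer $T$ recognizing $R$. The bound $k$ controls the \emph{final} imbalance between the two coordinates of each accepted pair, but it says nothing about the \emph{intermediate} delay accumulated during a run of $T$. Concretely, $T$ might be built so that on input $(u,v)$ its only accepting run first consumes all of $u$ and only then begins reading $v$; the two read heads can then drift apart by $|u|$ while $\bigl||u|-|v|\bigr|$ stays bounded. In that situation your automaton $\mathcal{A}$ with state component in $A^{\leq k}\times A^{\leq k}$ simply cannot track the run, and the restricted $\mathcal{A}$ would fail to accept some $(u,v)\rpad$ with $(u,v)\in R$. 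Thus completeness of $\mathcal{A}$ does not follow as claimed.

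What is actually needed here is the resynchronization theorem (which is precisely the content of the Frougny--Sakarovitch result you name): a rational relation of globally bounded length difference is realized by \emph{some} transducer of bounded delay, equivalently is a synchronized rational relation. Proving this requires a pumping or normal-form argument showing that loops in a run of $T$ can be reordered or truncated so that the instantaneous delay never exceeds a constant depending on $T$ and $k$; it is not a consequence of merely bounding the buffers. Once resynchronization is in hand, your automaton construction is correct, and your handling of $\lpad$ --- either by citing Hoffmann--Thomas, or by observing that the $\rpad$ and $\lpad$ encodings differ by a shift of the second coordinate by at most $k$ positions, which is a rational transduction under which regular languages are closed --- is fine. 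So the only real issue is that the nondeterministic-buffer simulation is presented as though correctness were immediate from the length bound, when in fact the resynchronization step is where all the work lies.
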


Now we shall prove some results on automaticity and biautomaticity for
the class of homogeneous monoids.

Unlike the situation for groups, both automaticity and biautomaticity
for monoids and semigroups are dependent on the choice of generating
set \cite[Example~4.5]{campbell_autsg}. However, for monoids,
biautomaticity and automaticity are independent of the choice of
\emph{semigroup} generating sets
\cite[Theorem~1.1]{duncan_change}. In our particular case of
homogeneous
monoids, we do have independence of the choice of generating set:

\begin{proposition}
\label{prop:autochangegen}
Let $M$ be a homogeneous monoid that is \auto\ (respectively,
\biauto). Then for any finite generating set $C$ of $M$ there is a language $K$
over $C$ such that $(C,K)$ is an
automatic (respectively, biautomatic) structure for $M$.
\end{proposition}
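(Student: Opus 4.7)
The plan is to transport the given automatic (resp.\ biautomatic) structure $(A,L)$---where $A$ is the unique minimal generating set of $M$ discussed in the preliminaries---to the new alphabet $C$ by a letter-to-letter substitution. Since any generating set for $M$ must contain (representatives of) $A$, I would fix a letter-to-letter homomorphism $g\colon A^* \to C^*$ sending each $a \in A$ to some letter of $C$ representing $a$ in $M$, together with a homomorphism $f\colon C^* \to A^*$ sending each $c \in C$ to an $A$-word representing the same element of $M$, arranged so that $f \circ g = \mathrm{id}_{A^*}$. The candidate language over $C$ would then be $K := g(L)$: it is regular, as the letter-to-letter image of a regular language, and contains a representative of every element of $M$.

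The main task is to verify that $K_c\rpad$ is regular for each $c \in C \cup \{\emptyword\}$. Writing $L_w := \{(u,v) \in L \times L : uw =_M v\}$ for $w \in A^*$, a short direct calculation yields the key identity
\[
K_c \;=\; (g \times g)(L_{f(c)}).
\]
Because $g$ is length-preserving and letter-to-letter, $g \times g$ carries regular padded relations over $A$ to regular padded relations over $C$, so it suffices to prove that $L_w \rpad$ is regular for every $w \in A^*$.

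For this step I would proceed by composition. Each $L_a$ with $a \in A \cup \{\emptyword\}$ is a rational relation, since $L_a\rpad$ is regular. Writing $w = a_1 \cdots a_n$, I would verify the set-theoretic identity $L_w = L_{a_1} \circ \cdots \circ L_{a_n}$: the nontrivial inclusion uses that every element of $M$ has an $L$-representative, so given $(u,v) \in L_w$ one can choose $L$-words representing the intermediate elements $u a_1 \cdots a_i$ to serve as middles of the composition. Rational relations are closed under composition, so $L_w$ is rational. In a homogeneous monoid the length function is additive, forcing $|v| = |u| + |w|$ for all $(u,v) \in L_w$; the length difference is bounded, and \fullref{Proposition}{prop:rationalbounded} then delivers regularity of $L_w\rpad$. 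The biautomatic case follows by the symmetric argument applied to the relations ${}_w L$ together with the additional left-padded regularity hypotheses supplied by the biautomatic structure.

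The main obstacle I expect is the composition identity $L_w = L_{a_1} \circ \cdots \circ L_{a_n}$; rational closure under composition is standard, but the decomposition crucially relies on $L$ being surjective onto $M$ in a length-graded monoid. Once this identity and the bounded length difference (which is where homogeneity enters, through the additive length function) are in hand, \fullref{Proposition}{prop:rationalbounded} closes out the argument.
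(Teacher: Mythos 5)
Your argument has a genuine gap at the very first step. You take the given automatic structure to be $(A,L)$, where $A$ is the unique minimal generating alphabet of $M$. But the hypothesis is only that $M$ is \auto, i.e.\ that it admits an automatic structure with respect to \emph{some} generating set. That structure is $(B,L)$ for some finite alphabet $B$ which, while it must contain $A$ (up to relabelling), can be strictly larger, and the language $L$ may genuinely use letters from $B\setminus A$. Your construction relies on a \emph{letter-to-letter} map out of the source alphabet, and no such map $B^*\to A^*$ (or $B^*\to C^*$) exists in general, because a letter of $B$ may represent an $A$-word of length $\geq 2$. Restricting $L$ to $A^*$ is also not an option, since $L\cap A^*$ need not map onto $M$. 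So the reduction to the source alphabet $A$ is precisely the case $C=A$ of the proposition you are trying to prove, and assuming it is circular. The paper handles this missing step by replacing your letter-to-letter map with the rational ``spelling'' relation $\rel{Q}\subseteq B^*\times A^*$ that sends $b_1\cdots b_k$ to $w_{b_1}\cdots w_{b_k}$ (each $w_b\in A^*$ representing $b$), setting $K=L\circ\rel{Q}$, observing that $K_a=\rel{Q}^{-1}\circ L_a\circ\rel{Q}$ is rational, and invoking homogeneity together with \fullref{Proposition}{prop:rationalbounded} to get $K_a\rpad$ regular. This is the hard direction and it is the piece your argument skips.

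The second half of your argument --- going from a structure over $A$ to one over an arbitrary $C$ via an injective letter-to-letter embedding $g\colon A^*\to C^*$, the identity $K_c=(g\times g)(L_{f(c)})$, and the factorization $L_w=L_{a_1}\circ\cdots\circ L_{a_n}$ (whose proof correctly uses surjectivity of $L$ and additivity of the length function) --- is correct and essentially matches the paper's treatment of letters $c\in C\setminus A$, which it also handles by composing the one-letter relations $K_{u_1}\circ\cdots\circ K_{u_m}$. Your phrasing via $g$ is arguably slightly cleaner there, since it treats all $c\in C$ uniformly, but it does not supply the missing $B\to A$ step.
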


\begin{proof}
We first consider the case for \auto.
Suppose $(B,L)$ is an automatic structure for $M$.

Notice that both the alphabet $B$ and the alphabet $C$ must contain a
subalphabet representing the unique minimal generating set of
$M$. Without loss of generality, assume that they both contain the
alphabet $A$ representing this minimal generating set. For each $b \in
B$, let $w_b \in A^*$ be such that $w_b =_M b$. Let $\rel{Q} \subseteq
B^* \times A^*$ be the relation
\[
\gset[\big]{(b,w)}{b \in B}^*.
\]
Since $\rel{Q}$ is simply the subset of $B^* \times A^*$ obtained by taking the Kleene star of the finite set of
elements of the form $(b,w_b)$, it is by definition a rational relation.
Let
\[
K = L \circ \rel{Q} = \bigl\{v \in A^* : (\exists u \in L)\bigl((u,v) \in \rel{Q}\bigr)\bigr\}.
\]
Let $a \in A \cup \{\emptyword\}$. Then
\begin{align*}
(u,v) \in K_a &\iff u \in K \land v \in K \land ua =_{M} v\\
&\iff (\exists u',v' \in L)((u',u) \in \rel{Q} \land (v',v) \in \rel{Q} \land
u'a =_{M} v')\\
&\iff (\exists u',v' \in L)((u',u) \in \rel{Q} \land (v',v) \in \rel{Q}
\land (u',v') \in L_a)\\
&\iff (u,v) \in \rel{Q}^{-1} \circ L_a \circ \rel{Q}.
\end{align*}
Since a composition of rational relations is rational \cite[Theorem 4.4]{berstel_transductions}, it follows that
$K_a = \rel{Q}^{-1} \circ L_a \circ \rel{Q}$ is a rational relation. Furthermore
\begin{align*}
(u,v) \in K_a &\implies ua =_M v \\
&\implies (ua)\lambda = v\lambda \\
&\implies u\lambda + a\lambda = v\lambda \\
&\implies |u| + 1 = |v|;
\end{align*}
thus $K_a^\$$ is a regular language by
\fullref{Proposition}{prop:rationalbounded}.

For $c \in C - A$, let $u = u_1\cdots u_m \in A^*$ be such that $u =
c$; then $K_c^\$ = (K_{u_1}\circ K_{u_2}\circ\cdots \circ
K_{u_m})^\$$ is regular by \cite[Proposition~2.3]{campbell_autsg}
and similarly ${}^\$K_c$ is regular. Hence $(C,K)$ is an automatic
structure for $M$.

For \biauto, assume $(B,L)$ is a biautomatic structure for $M$ and
follow the above reasoning to show that each of the languages
$K_a^\$$, ${}_aK^\$$, ${}^\$K_a$, and ${}_a^\$K$ are regular.
\end{proof}

\begin{proposition}
\label{prop:htbiautoimpliesbiauto}
Let $M$ be a homogeneous monoid, let $B$ be a finite generating set for $M$, and let $L$ be a regular language over $B$
such that every element of $M$ has at least one representative in $L$ and such that, for each
$b \in B \cup \{\emptyword\}$, at least one of $L_b^\$$ and ${}^\$L_b$ and at least one of ${}_bL^\$$ and
${}_b^\$L$ is regular. Then $M$ is \biauto.
\end{proposition}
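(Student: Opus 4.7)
The strategy is to transfer the hypothesized structure on $(B,L)$ to a genuine biautomatic structure $(A,K)$ over the unique minimal generating set $A\subseteq B$ of $M$, exploiting the fact that every letter of $A$ has $\lambda$-length $1$, so that length differences in the relevant relations become automatically bounded and \fullref{Proposition}{prop:rationalbounded} applies. Concretely: for each $b\in B\setminus A$ choose a word $w_b\in A^*$ with $w_b=_M b$, and extend to the monoid homomorphism $\phi:B^*\to A^*$ that fixes $A$ pointwise and sends $b$ to $w_b$. Set $K=\phi(L)$; this is regular (as the homomorphic image of a regular language), and since $\phi$ descends to the identity on $M$, every element of $M$ has at least one representative in $K$.

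The key observation is that for every $a\in A\cup\{\emptyword\}$,
\[
K_a=(\phi\times\phi)(L_a)\qquad\text{and}\qquad{}_aK=(\phi\times\phi)({}_aL).
\]
The forward inclusions are immediate. For the reverse, suppose $(u',v')\in K_a$ and write $u'=\phi(u)$, $v'=\phi(v)$ with $u,v\in L$; since $\phi$ fixes $a$, we have $\phi(ua)=\phi(u)a=u'a=_M v'=\phi(v)$, and because $\phi$ descends to the identity on $M$ this forces $ua=_M v$, so $(u,v)\in L_a$. The analogous argument handles ${}_aK$.

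By hypothesis, at least one of $L_a\rpad$ and $L_a\lpad$ is regular, so $L_a$ is a rational relation; since rational subsets are preserved under monoid homomorphisms, $K_a=(\phi\times\phi)(L_a)$ is rational as well, and similarly ${}_aK$ is rational. Moreover, because $K\subseteq A^*$ and each letter of $A$ has $\lambda$-length $1$, any $(u',v')\in K_a$ satisfies $|v'|-|u'|=\lambda(v')-\lambda(u')=\lambda(a)\in\{0,1\}$, and analogously for ${}_aK$. Therefore \fullref{Proposition}{prop:rationalbounded} yields the regularity of all four of $K_a\rpad$, $K_a\lpad$, ${}_aK\rpad$, and ${}_aK\lpad$ simultaneously. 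Hence $(A,K)$ is a biautomatic structure for $M$ and $M$ is \biauto.

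The main subtlety I expect is in checking the identity $K_a=(\phi\times\phi)(L_a)$: because several words of $L$ can collapse onto the same word of $K$, one has to be careful that every pair in $K_a$ genuinely lifts to a pair in $L_a$, and this works precisely because $\phi$ fixes each $a\in A$, letting one commute $\phi$ past the $a$ in $u'a$.
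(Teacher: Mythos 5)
Your proof is correct and follows essentially the same strategy as the paper's: both push the structure down to the minimal generating set $A\subseteq B$, observe that the relevant relations over $A$ are rational (the paper phrases this via composition with the relation $\rel{Q}\subseteq B^*\times A^*$, which is exactly the graph of your homomorphism $\phi$, so $\rel{Q}^{-1}\circ L_a\circ\rel{Q}=(\phi\times\phi)(L_a)$), and then exploit homogeneity to bound $\bigl||u|-|v|\bigr|$ so that \fullref{Proposition}{prop:rationalbounded} yields all four padded relations at once. Your formulation in terms of the homomorphism $\phi$ and your explicit verification of both inclusions of $K_a=(\phi\times\phi)(L_a)$ are a mild stylistic cleanup, not a different argument.
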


\begin{proof}
Suppose $L_b^\$$ and ${}_bL^\$$ are regular; the other cases are
similar.

As in the proof of \fullref{Proposition}{prop:autochangegen}, the alphabet $B$ must contain the unique minimal
generating set $A$ of $M$. Construct the relation $\rel{Q} \subseteq B^* \times A^*$ as in the proof of
\fullref{Proposition}{prop:autochangegen}. Let $K = L \circ \rel{Q}$.

Let $a \in A \subseteq B$. Then at least one of $L_a^\$$ and
${}^\$L_a$ and at least one of ${}_aL^\$$ and ${}_a^\$L$ is
regular. In particular, $L_a$ and ${}_aL$ are rational relations. So
$K_a = \rel{Q}^{-1} \circ L_a \circ \rel{Q}$ and ${}_aK =
\rel{Q}^{-1}\circ {}_aL \circ \rel{Q}$ are rational relations. If
$(u,v)$ is in $K_a$ or ${}_aK$, then $|u| +1 = |v|$. Hence $K_a^\$$,
${}^\$K_a$, ${}_aK^\$$, and ${}_a^\$K$ are all regular by
\fullref{Proposition}{prop:rationalbounded}. Since $a \in A \cup
\{\emptyword\}$ was arbitrary, this proves that $(A,K)$ is a
biautomatic structure for $M$.
\end{proof}

Despite the positive results obtained so far, note that \auto\ does
not imply \biauto\ in the class of homogeneous monoids, as
we shall see below in
\fullref{Example}{eg:fcrsfdtnonbiautoauto}.

\section{Fundamental examples}
\label{sec:fundexs}

\subsection{An \fcrs, \fdt, non-\biauto, \auto\ homogeneous monoid}

In this subsection we present a homogeneous monoid that is \fcrs\ and thus
\fdt, is \auto, but is not \biauto. By considering the reversal
semigroup of this example we will get a homogeneous monoid that admits
a finite complete rewriting system but is not automatic.

\begin{example}
\label{eg:fcrsfdtnonbiautoauto}
Let $\m{\fcrs}{\auto}$ be the monoid defined by the presentation $\pres{A}{\rel{R}}$,
where $A = \{a,b,c\}$ and $\rel{R}$ consists of the rewriting rules
% \begin{align*}
% cbab &\imreduces cbcb,  & cbbb &\imreduces cbcb,  & cbca &\imreduces cacb,\\
% cbaa &\imreduces cbca,  & cbba &\imreduces cbca,\\
% caab &\imreduces cacb,  & cabb &\imreduces cacb,\\
% caaa &\imreduces caca,  & caba &\imreduces caca,
% \end{align*}
\begin{align*}
cxyz &\imreduces cxcz,  & (x,y,z\in\{a,b\})  \\
cbca &\imreduces cacb.
\end{align*}
\end{example}

\begin{proposition}\label{proposition_eg:fcrsfdtnonbiautoauto_is_fcrs}
The monoid $\m{\fcrs}{\auto}$ is \fcrs.
\end{proposition}

\begin{proof}
The rewriting system $(A,\rel{R})$ is noetherian because every
rewriting rule either decreases the number of non-$c$ symbols, or it stays
the same and
decreases the number of symbols $b$ to the left of symbols $a$. To see
that it is confluent, notice that the only overlaps are those between
the left-hand side of $cbca \imreduces cacb$ and the left-hand side of
a rule of the form $cayz \imreduces cacz$, where $y,z \in
\{a,b\}$. However, they resolve since we have
\[
cbcayz \imreduces \begin{cases} cacbyz \imreduces cacbcz \\
cbcacz \imreduces cacbcz
\end{cases},\ \textrm{for any}\ y,z\in \{a,b\}.
\]
Therefore $(A,\rel{R})$ is confluent.
\end{proof}

\begin{proposition}
The monoid $\m{\fcrs}{\auto}$ is \auto, but non-\biauto.
\end{proposition}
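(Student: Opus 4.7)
The plan is to construct an explicit automatic structure from the normal forms of $(A, \rel{R})$, and then rule out every biautomatic structure by exhibiting a non-regular left-multiplier relation.

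For \auto, I would take $L \subseteq A^*$ to be the set of $\rel{R}$-irreducible words; this is regular because $\rel{R}$ has finitely many left-hand sides. For each $g \in A$, homogeneity gives $|v| = |u| + 1$ for every $(u, v) \in L_g$, so by \fullref{Proposition}{prop:rationalbounded} it suffices to prove that $L_g$ is a rational relation. I would construct a finite transducer that reads an irreducible $u$ from left to right and outputs the normal form of $ug$, storing a bounded-length suffix of the output in its state. The cascade triggered by appending a letter can only propagate leftward through a structured suffix of $u$ via iterated use of $cbca \imreduces cacb$; this is locally predictable and therefore finite-state simulable. Hence $(A, L)$ is an automatic structure.

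For non-\biauto, suppose for contradiction that $(A, L')$ is a biautomatic structure; by \fullref{Proposition}{prop:autochangegen} we may take $A$ to be the unique minimal generating set. The critical calculation is $c \cdot (ab)^k =_M (ca)^k b$, obtained from $k-1$ iterated applications of $caba \imreduces caca$ cascading rightward through the word. More generally, one checks that $c \cdot (ab)^k (ca)^m b =_M (ca)^k b (ca)^m b$ in $M$ for every $k \geq 1$ and $m \geq 1$. For each such pair of elements, fix representatives $\wh{u}_{k,m} \in L'$ of $(ab)^k (ca)^m b$ and $\wh{v}_{k,m} \in L'$ of $(ca)^k b (ca)^m b$; by homogeneity $|\wh{u}_{k,m}| = 2k + 2m + 1$ and $|\wh{v}_{k,m}| = 2k + 2m + 2$, and the pair lies in $_c L'$.

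The obstruction is that the collection of padded images $(\wh{u}_{k,m}, \wh{v}_{k,m}) \rpad$ in $_c L' \rpad$ forces a non-regular matching correlation between the exponent $k$ on the prefix of $\wh{u}_{k,m}$ and the displaced block of length linear in $k$ appearing inside $\wh{v}_{k,m}$. Intersecting $_c L' \rpad$ with a suitable regular ``shape'' language isolates a sublanguage to which a standard pumping-lemma argument applies, yielding the contradiction. The main obstacle --- and the heart of the proof --- is controlling the freedom in choosing the representatives: homogeneity pins down their lengths, and a careful analysis of the confluent length-preserving rewriting shows that any two words representing the same element differ only by finitely many local rewrites, which is insufficient to defeat the counting obstruction. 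Once this bookkeeping is complete, no biautomatic structure survives, and $M$ is non-\biauto.
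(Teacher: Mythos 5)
For the \auto\ direction your strategy matches the paper's: take $L$ to be the $\rel{R}$-irreducible words and verify that each multiplier relation $L_g$ is rational with bounded length-difference, then invoke \fullref{Proposition}{prop:rationalbounded}. However, your assertion that a transducer can do this while ``storing a bounded-length suffix of the output in its state'' undersells the difficulty: appending $a$ to a normal form ending in $(cb)^\alpha c$ rewrites to one ending in $ca(cb)^\alpha$ via iterated $cbca\imreduces cacb$, and $\alpha$ is unbounded. The relation is indeed rational --- a nondeterministic transducer can guess where the $(cb)^\alpha$-block begins --- but it is not realised by a bounded-delay machine holding a bounded suffix. The paper instead writes each $L_a$, $L_b$, $L_c$ explicitly as a finite union of manifestly rational pieces, which is what the argument actually requires. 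Your conclusion is right, but the justification needs tightening.

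For non-\biauto\ there is a genuine gap. You place pairs $(\wh{u}_{k,m},\wh{v}_{k,m})$ directly in ${}_cL'$, so $\wh{v}_{k,m}$ represents $c\,\wh{u}_{k,m}$ and necessarily contains the letter $c$. Once $c$ is present, uniqueness of representation fails: for $k\geq 2$ the word $(ca)^kb(ca)^mb$ contains the right-hand side $caca$, hence is Thue-related to other words (e.g.\ via $caba\imreduces caca$), and you have no control over which of these $L'$ actually contains. Your claim that ``any two words representing the same element differ only by finitely many local rewrites'' does not bound this; the number of alternative representatives grows with $k$ and $m$, and a pumping argument needs concrete structural control of the actual words in $L'$, not just their lengths. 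The paper sidesteps this completely by working with the \emph{composed} relation ${}_cL\circ{}_cL^{-1}=\{(u,v)\in L\times L : cu=_M cv\}$, which allows both coordinates to be $c$-free. Since every rule of $\rel{R}$ involves $c$, a $c$-free word such as $a^nb^{n+1}$ or $b^na^nb$ is its own unique representative over $A$ and is therefore forced into any language surjecting onto $M$. That is the decisive device your proposal is missing; without it the representative-choice obstruction you yourself flag remains unresolved, and the proof does not go through as written.
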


\begin{proof}
Let $L$ be the language of normal forms of $(A,\rel{R})$. Since
$(A,\rel{R})$ is finite, $L$ is regular \cite[Lemma 2.1.3]{book_srs}. Let $u \in L$. Consider the
following cases separately:
\begin{enumerate}

\item $uc$ must also be in normal form, since no left-hand side of a
  rewriting rule ends in $c$. Hence
\[
L_c = \gset{(u,u)}{u \in L}(\emptyword,c),
\]
and so $L_c$ is rational by \fullref{Proposition}{prop:idrational}.

\item If $ub$ is not in normal form, then $ub$ must end with the
  left-hand side of a rewriting rule. Hence $u = u'cxy$ for some $x,y
  \in \{a,b\}$, then $ub = u'cxyb \imreduces u'cxcb$. This word
  $u'cxcb$ is in normal form since $u'cx$ (which is a prefix of $u$)
  is in normal form and no rewriting rule has left-hand side $cxcb$
  for any $x \in \{a,b\}$. Thus
\begin{align*}
  L_b ={}& \gset{(u,u)(\emptyword,b)}{u\in L, ub \in L} \\
         & \cup \gset{(u'cxy,u'cxcb)}{x,y \in \set{a,b}, u'cxy \in L} \\
  ={}& \gset{(u,u)(\emptyword,b)}{u\in L, ub \in L} \\
         & \cup \parens[\big]{\gset{(u',u')}{u'\in L}\gset{(cxy,cxcb)}{x,y \in \set{a,b}} \cap (L \times L)},
\end{align*}
and so $L_b$ is rational by \fullref{Proposition}{prop:intersectionwithregular}.

\item If $ua$ is not in normal form, then $ua$ must end with the
  left-hand side of a rewriting rule and so either $u = u'cbc$ or $u =
  u'cxy$ for some $x,y \in \{a,b\}$.
\begin{enumerate}
\item If $u = u''(cb)^\alpha y$, with $y\in \{a,b,c\}$ and where $\alpha \geq
1$ is maximal, then
  $ua = u''(cb)^\alpha ya \reduces u''ca(cb)^\alpha$, since
  $ua = u''(cb)^\alpha ya \imreduces u''(cb)^\alpha ca$ when $y\in \{a,b\}$,
and then $cbca\imreduces cacb$. Now, $u''ca(cb)^\alpha$ is in normal
  form since $u''$ and $ca(ab)^\alpha$ are in normal form and the only
  left-hand side of a rewriting rule of that ends in $ca$ is $cbca$,
  and $\alpha$ is maximal.
\item If $u = u'cay$, with $y\in\{a,b\}$, then $ua = u'caya \imreduces
u'caca$ and this
  word is in normal form since $u'ca$ is in normal form.
% \item If $u = u''(cb)^\alpha y$ where $\alpha \geq 1$ is maximal then
%   $ua = u''(cb)^\alpha ya \imreduces u''(cb)^\alpha ca \reduces
%   u''ca(cb)^\alpha$ and this word is in normal form since $u''ca$ is
%   in normal form and $\alpha$ is maximal.
\end{enumerate}
Therefore
\begin{align*}
L_a ={}& \gset{(u,ua)}{u\in L, ua \in L} \\
&\cup \gset{(u''(cb)^\alpha y, u''ca(cb)^\alpha)}{\alpha \in \nset,y\in\set{a,b,c}, u''(cb)^\alpha y \in L, u'' \notin A^*cb} \\
&\cup \gset{(u'cay, u'caca)}{y \in \set{a,b}, u'cay \in L} \\
={}& \parens[\big]{\gset{(u,u))}{u\in L}(\emptyword,a) \cap (L\times L)} \\
&\cup \parens[\big]{\gset{(u''u'')}{u'' \in L\setminus(A^*cb)}(\emptyword,ca)(cb,cb)^+\gset{(y,\emptyword)}{y \in \set{a,b,c}} \cap L \times L} \\
&\cup \parens[\big]{\gset{(u',u')}{u' \in L}\gset{(cay, caca)}{y \in \set{a,b}} \cap L \times L},
\end{align*}
\end{enumerate}
and so $L_a$ is a union of relations, each of which is rational by \fullref{Proposition}{prop:idrational} and \ref{prop:intersectionwithregular}.

Note also that $L_\emptyword = \{(u,u) : u \in L\}$ is rational. Hence $L_x$ is a rational relation for any
$x \in A \cup \{\emptyword\}$. Moreover, if $(u,v)$ lies in one of these relations, then $\bigl||u| - |v|\bigr| \leq 1$
and so $L_x^\$$ is regular for all $x \in A \cup \{\emptyword\}$ by \fullref{Propositions}{prop:rationalbounded} and
\ref{prop:intersectionwithregular}. Hence $\m{\fcrs}{\auto}$ is \auto.

Suppose, with the aim of obtaining a contradiction, that $\m{\fcrs}{\auto}$ is \biauto. Then by
\fullref{Proposition}{prop:autochangegen} it admits a biautomatic structure $(A,L)$. Thus $L$ is a regular language
mapping onto $\m{\fcrs}{\auto}$ and ${}_cL^\$$ is regular. This contradicts
\fullref{Lemma}{lem:fcrsfdtnonbiautoautonotleftautomatic} below.
\end{proof}

\begin{lemma}
  \label{lem:fcrsfdtnonbiautoautonotleftautomatic}
  There is no regular language $L \subseteq A^*$ such that $L$ maps onto $\m{\fcrs}{\auto}$ and ${}_cL^\$$ is regular.
\end{lemma}

\begin{proof}
  Suppose, with the aim of obtaining a contradiction, that such a language $L$ exists. Then
  $({}_cL \circ {}_cL^{-1})^\$$ is regular. Let $n$ be an even number exceeding the number of states in an automaton
  recognizing $({}_cL \circ {}_cL^{-1})^\$$. Observe that
  \[
    {}_cL \circ {}_cL^{-1} = \{(u,v) \in L : cu =_\monly cv\}.
  \]
  Notice that $a^n b^{n+1}$ is not represented by any other word over $A$
  and similarly $b^na^nb$ is not represented by any other word over
  $A$. So $a^nb^{n+1},b^na^nb \in L$. Furthermore,
  \begin{align*}
    ca^nb^{n+1} &\reduces (ca)^{n/2}(cb)^{(n/2)+1} \\
    \intertext{and}
    cb^na^{n}b &\reduces (cb)^{n/2}(ca)^{n/2}cb \\
                &\reduces (ca)^{n/2}(cb)^{(n/2)+1}
  \end{align*}
  and so $(a^nb^{n+1},b^na^nb) \in {}_cL \circ {}_cL^{-1}$. Since $n$ exceeds the number of states in an automaton
  recognizing $({}_cL \circ {}_cL^{-1})^\$$, we can apply the pumping lemma to a segment of the word that lies within
  the first $n$ letters of $(a^nb^{n+1},b^na^nb)^\$$ (that is, to a subword of the form $(a^k,b^k)^\$$ for some
  $k \geq 1$) to see that $(a^{n+ik}b^{n+1},b^{n+ik}a^nb) \in {}_cL \circ {}_cL^{-1}$ for some $k \geq 1$ and for all
  $i \in \nset$. Hence, by definition of the relation ${}_cL$, we have $ca^{n+ik}b^{n+1} =_\monly cb^{n+ik}a^nb$. But
  \begin{align*}
    ca^{n+2k}b^{n+1} &\reduces (ca)^{n/2+k}(cb)^{(n/2)+1} \\
    \intertext{and}
    cb^{n+2k}a^{n}b &\reduces (cb)^{n/2+k}(ca)^{n/2}cb \\
                     &\reduces (ca)^{n/2}(cb)^{(n/2)+k+1},
  \end{align*}
  and so the normal forms of $ca^{n+ik}b^{n+1}$ and $cb^{n+ik}a^nb$ are unequal; this contradicts the previous
  equality. Thus $M$ is not \biauto.
\end{proof}

\subsection{An \fcrs, \fdt, non-\biauto, non-\auto\ homogeneous monoid}

\begin{definition}
Let $S$ be a monoid defined by a presentation
$\pres{A}{\rel{R}}$. Denote by $M^\rev$ the monoid defined by the
presentation $\pres{A}{\rel{R}^\rev}$, where
$\rel{R}^\rev=\{(l^\rev,r^\rev): (l,r)\in\rel{R}\}$, that is called
the \defterm{reversal monoid} of $M$. [Note that $(M^\rev)^\rev \simeq M$.]
\end{definition}

\begin{example}
\label{eg:fcrsfdtnonbiautononauto}
Let $\m{\fcrs}{\nonauto} = \parens[big]{\m{\fcrs}{\auto}}^\rev$, which is defined by the presentation
$\pres{A}{\rel{R}^\rev}$, where $\pres{A}{\rel{R}}$ is the
presentation defining \fullref{Example}{eg:fcrsfdtnonbiautoauto}.
\end{example}

Since $\m{\fcrs}{\nonauto}$ is presented by $\pres{A}{\rel{R}^\rev}$ we can argue as in
the proof of Proposition~\ref{proposition_eg:fcrsfdtnonbiautoauto_is_fcrs}
 that the rewriting system is noetherian and that the overlaps, that
result in critical pairs, resolve in a similar way. Thus $\m{\fcrs}{\nonauto}$ is also
\fcrs\ and thus \fdt.

\begin{proposition}
  $\m{\fcrs}{\nonauto}$ is non-\auto.
\end{proposition}

\begin{proof}
  Suppose, with the aim of obtaining a contradiction, that $\m{\fcrs}{\nonauto}$ is \auto. Let $(A,L)$ be an automatic structure
  for $\m{\fcrs}{\nonauto}$. Then $L_a^\$$ is regular for all $a \in A \cup \set{\emptyword}$. Since $\m{\fcrs}{\nonauto}$ is homogeneous, if
  $(u,v) \in L_a$, then $\abs[\big]{|u|-|v|} \leq 1$ and so ${}^\$L_a$ is regular by
  \cite[Corollary~4.2]{hoffmann_notions}. Notice that $({}^\$L_a)^\rev = \parens[\big]{{}_a(L^\rev)}^\$$. Hence
  $L^\rev$ is a regular language mapping onto $\parens[\big]{\m{\fcrs}{\nonauto}}^\rev$ such that $\parens[\big]{{}_a(L^\rev)}^\$$ is regular. Since
  $\parens[\big]{\m{\fcrs}{\nonauto}}^\rev \simeq \m{\fcrs}{\auto}$, this contradicts \fullref{Lemma}{lem:fcrsfdtnonbiautoautonotleftautomatic} and so
  $\m{\fcrs}{\nonauto}$ is indeed non-\auto.
\end{proof}

\subsection{A non-\fcrs, \fdt, \biauto, \auto\ homogeneous monoid}

The following homogeneous monoid was introduced by Katsura and
Kobayashi \cite[Example~3]{katsura_nofcrs}, who showed that it
is non-\fcrs, but is \fdt. We shall prove that it is \biauto\ and thus
\auto.

\begin{example}
\label{eg:nonfcrsfdtbiautoauto}
Let $A = \{a,b_i,c_i,d_i : i = 1,2,3\}$ and let $\rel{R}$ consist of
the rewriting rules
\begin{align}
b_ia &\imreduces ab_i && i \in \{1,2,3\}, \label{eq:katsura:ba}\\
c_jb_j &\imreduces c_1b_1 && j \in \{2,3\}, \label{eq:katsura:cb}\\
b_jd_j &\imreduces b_1d_1 && j \in \{2,3\}. \label{eq:katsura:bd}
\end{align}
Let $\m{\fdt}{\biauto} = \pres{A}{\rel{R}}$. Then $\m{\fdt}{\biauto}$ is
\fdt\ \cite[\S~4]{katsura_nofcrs} but is
non-\fcrs\ \cite[Proposition~3]{katsura_nofcrs}.
\end{example}

\begin{proposition}
The monoid $\m{\fdt}{\biauto}$ of \fullref{Example}{eg:nonfcrsfdtbiautoauto} is \biauto\ and thus \auto.
\end{proposition}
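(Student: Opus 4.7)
\emph{Plan.} I would exhibit a biautomatic structure $(A, L)$ for $M$ by taking $L \subseteq A^*$ to be the set of irreducible words for the rewriting system $(A, \rel{R})$: those avoiding the subwords $b_i a$, $c_j b_j$ for $j \in \{2,3\}$, and $b_j d_j$ for $j \in \{2,3\}$. As the complement of a finite union of regular patterns, $L$ is regular. The system $(A, \rel{R})$ is noetherian (order words lexicographically by the number of $ba$-inversions, then by the total count of letters $c_j$ and $d_j$ with $j \in \{2,3\}$), so every element of $M$ is represented by at least one word in $L$.

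Because $\rel{R}$ is length preserving, every pair in $L_x \cup {}_xL$ (for $x \in A \cup \{\emptyword\}$) satisfies $\bigl||u|-|v|\bigr| \leq 1$. By \fullref{Proposition}{prop:rationalbounded} it therefore suffices to show that $L_x$ and ${}_xL$ are rational relations. I would do this by case analysis on $x$ together with the relevant suffix (respectively prefix) of $u$: for $x = c_i$ one has $ux \in L$; for $x = b_i$ any rewriting is confined to the boundary between $u$ and a possible trailing $c_i$, where rule~\eqref{eq:katsura:cb} applies; for $x = d_i$, symmetrically, rule~\eqref{eq:katsura:bd} fires at a trailing $b_i$; for $x = a$ the inserted $a$ is pushed left through the trailing block of $b$'s via rule~\eqref{eq:katsura:ba}, and if the new junction exposes a tied $c_1\cdots b_1$ configuration then backward applications of rule~\eqref{eq:katsura:cb} generate extra representatives of the form $c_\gamma\,a\,b_\gamma\cdots$ with $\gamma \in \{1,2,3\}$. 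In each case the transformation between $u$ and $v$ affects only a bounded window, and the (finitely many) possible $v$'s are parametrized by a choice of indices in $\{1,2,3\}$, so a finite transducer recognizes $L_x$. A symmetric argument handles ${}_xL$. Finally $L_\emptyword$ records pairs of distinct irreducible representatives of the same element, which again differ only at ``tied'' $c$- and $d$-boundaries (where $c_\alpha\cdots b_\alpha =_M c_\gamma\cdots b_\gamma$ or $b_\alpha\cdots d_\alpha =_M b_\gamma\cdots d_\gamma$ with $\alpha, \gamma \in \{1,2,3\}$), so $L_\emptyword$ is also rational.

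The main obstacle is the non-confluence of $(A,\rel{R})$: the element $c_j b_j d_j$ (for $j \in \{2,3\}$), for instance, has two irreducible representatives, $c_1 b_1 d_j$ and $c_j b_1 d_1$, and similar ambiguities arise throughout longer words whenever rules~\eqref{eq:katsura:cb} and~\eqref{eq:katsura:bd} can be applied in either order at a tied boundary. Enumerating these multivalued cases, and verifying that each boundary exchange is local to a bounded window (hence can be tracked in the finite state of a transducer), is where the bulk of the technical work lies. Once this is done, \fullref{Proposition}{prop:rationalbounded} supplies regularity of each of $L_x\rpad$, $L_x\lpad$, ${}_xL\rpad$, and ${}_xL\lpad$, giving the required biautomatic structure and hence \biauto\ (and therefore \auto).
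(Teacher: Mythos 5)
Your approach differs from the paper's in one key respect: you take $L$ to be the set of irreducible words for the \emph{original} finite system $(A,\rel{R})$, and try to work directly with its non‑uniqueness, while the paper first replaces $\rel{R}$ with an explicit infinite but regularly‑structured \emph{confluent} system $\rel{S}$ (adding the rules $c_ja^kb_j\imreduces c_1a^kb_1$ and $c_1a^kb_1a^\ell d_j\imreduces c_ja^{k+\ell}b_1d_1$ for all $k,\ell\geq 0$), takes $L$ to be the $\rel{S}$‑irreducibles, and then simply observes that for $w\in L$ and $x\in A$, the word $wx$ reaches its (unique) normal form in at most one $\rel{S}$‑step, which is a synchronous regular operation. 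The paper's $L_\emptyword$ is therefore just the diagonal, and the argument is short.

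Your sketch has a genuine gap. The claim that ``each boundary exchange is local to a bounded window'' is false for your choice of $L$. For any $k\geq 0$, the words $c_1a^kb_1d_2$ and $c_2a^kb_1d_1$ are both $\rel{R}$‑irreducible and represent the same element of $M$ (since $c_1a^kb_1d_2=c_1b_1a^kd_2=c_2b_2a^kd_2=c_2a^kb_2d_2=c_2a^kb_1d_1$), yet they differ at position $1$ and at the final two positions, i.e.\ in two regions separated by a gap of unbounded length $k$. The same phenomenon also shows that $L_a$ cannot be analysed by only looking at a bounded suffix of $u$: appending $a$ to $c_jb_j$‑type prefixes creates, after the $\rel{R}$‑reductions you describe, exactly the ambiguities whose resolution in the paper requires the long‑range rules $c_1a^kb_1a^\ell d_j\imreduces c_ja^{k+\ell}b_1d_1$. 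What is true is that the \emph{state} one must carry to track these correlated index choices at the $c$‑end and the $d$‑end is finite; but that is a different and subtler assertion than bounded locality, and you neither state it nor verify it (in particular you never check that $L_\emptyword\rpad$ is regular, which is where the issue bites). The paper sidesteps all of this by building the confluent system $\rel{S}$ so that $L$ is a language of unique normal forms; I would suggest adopting that route, or else replacing the ``bounded window'' claim by a genuine finite‑state argument about correlated but widely separated $c$/$d$‑index substitutions.
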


\begin{proof}
Let $\rel{S}$ consist of the following rewriting rules:
\begin{align}
b_ia &\imreduces ab_i && i \in \{1,2,3\}, \label{eq:katsura2:ba} \\
c_ja^kb_j &\imreduces c_1a^kb_1 && j \in \{2,3\}, k \in \nset \cup \{0\}, \label{eq:katsura2:cab} \\
c_1a^kb_1 d_j &\imreduces c_ja^{k}b_1d_1 && j \in \{2,3\}, k \in
\nset \cup \{0\}, \label{eq:katsura2:cabd} \\
b_jd_j &\imreduces b_1d_1 &&  j \in \{2,3\}.\label{eq:katsura2:bd}
\end{align}
Notice that every rule in $\rel{S}$ is a consequence of those in $\rel{R}$.
Indeed, using rules in $\rel{R}$, we have
\begin{multline*}
c_ja^kb_j \equivrl c_ja^{k-1}b_ja \equivrl \ldots \equivrl c_jb_ja^k
\equivrl c_1b_1a^k \equivrl c_1ab_1a^{k-1} \equivrl \ldots \equivrl c_1a^kb_1;
\end{multline*}
and
\begin{multline*}
c_1a^kb_1 d_j \equivrl \ldots \equivrl c_1b_1a^{k}d_j
\equivrl c_jb_ja^{k}d_j \equivrl \ldots \equivrl c_ja^{k}b_jd_j
\equivrl c_ja^{k}b_1d_1.
\end{multline*}

Consider any ordering $<$ of $A$ satisfying
$b_1 < b_j < a < d_1 < d_j$ (for $j \in \{2,3\}$). By
\cite[Lemma 2.4.3]{baader_termrewriting} the right-to-left
length-plus-lexicographic order induced by $<$ is noetherian. Moreover, any
rewriting using a rule in $\rel{S}$
decreases a word with respect to this ordering. Thus the rewriting system
$(A,\rel{S})$ is noetherian.

To see that $(A,\rel{S})$ is confluent, notice that there are two
possible overlaps of left-hand sides of rewriting rules: an overlap of
\eqref{eq:katsura2:ba} and \eqref{eq:katsura2:cab}, and an overlap of
 \eqref{eq:katsura2:cab} and \eqref{eq:katsura2:bd}.
%, and an overlap of \eqref{eq:katsura2:ba} and \eqref{eq:katsura2:cabad}.
However, critical pairs resolve, since
\begin{align*}
c_ja^kb_ja &\imreduces \begin{cases}
c_ja^{k+1}b_j \imreduces c_1a^{k+1}b_1 \\
c_1a^kb_1a \imreduces c_1a^{k+1}b_1
\end{cases}
\intertext{and}
c_ja^kb_jd_j &\imreduces \begin{cases}
c_ja^kb_1d_1 \\
c_1a^kb_1d_j \imreduces c_ja^kb_1d_1
\end{cases}.
\end{align*}

Let $L$ be the language of $\rel{S}$-irreducible words of $A^*$. That is,
\begin{align*}
L = A^* - A^*\Big(&\{b_1a,b_2a,b_3a\} \cup \{c_2,c_3\}a^*\{b_2,b_3\} \\
&\qquad\cup c_1a^*b_1\{d_2,d_3\} \cup \{b_2d_2,b_3d_3\}\Big)A^*;
\end{align*}
thus $L$ is regular. To prove that $(A,L)$ is an automatic structure for $\m{\fdt}{\biauto}$, we first show that $L_x$ and ${}_xL$ are
rational relations for all $x \in A\cup \set{\emptyword}$. Since $L$ is a cross-section for $\m{\fdt}{\biauto}$, the relations
$L_\emptyword$ and ${}_\emptyword L$ are both equal to the equality relation and hence are trivially rational.

So let $x \in A$ and $w \in L$. Suppose first that $x \in \set{b_2,b_3}$, then there may be a left-hand side of a
rewriting rule of type \eqref{eq:katsura2:cab} at the rightmost end of the word $wx$. In this case, $w$ must be of the
form $w'c_ja^k$ for some $w' \in L$ (since a prefix of a irreducible word is irreducible), $j \in \set{2,3}$ and
$k \in \nset\cup\set{0}$; applying the rewriting rule yields $w'c_1a^kb_1$, which is in normal form since no left-hand
side of a rewrite rule contains $c_1$ except for \eqref{eq:katsura2:cabd}, which clearly cannot be applied. So when
$x \in \set{b_2,b_3}$, at most one application of a rewrite rule at the rightmost end turns $wx$ into a normal form word. Hence
\begin{align*}
  L_{b_j}
  ={}& \gset[\big]{(w,wx)}{w \in L \setminus Lc_ja^*} \\
  &\cup \gset[\big]{(w'c_ja^k,w'c_1a^kb_1)}{w' \in L, k \in \nset\cup\set{0}} \\
  ={}& \gset[\big]{(w,w)}{w \in L \setminus Lc_ja^*}(\emptyword,x) \\
  &\cup \gset[\big]{(w',w')}{w' \in L}(c_j,c_1)(a,a)^*(\emptyword,b_1).
\end{align*}
Hence each $L_{b_j}$ is a rational relation by \fullref{Proposition}{prop:intersectionwithregular}.

Suppose now that $x \in \set{d_2,d_3}$. Reasoning similar to the previous paragraph shows that $wx$ is in normal form,
or one application of a rewrite rule of type \eqref{eq:katsura2:cabd} or \eqref{eq:katsura2:bd} turns it into normal
form. (Note that an application of a rule of type \eqref{eq:katsura2:bd} might be followed by one of type
\eqref{eq:katsura2:cab}, but these can be replaced by one of type \eqref{eq:katsura2:cabd}.) Hence
\begin{align*}
  L_{d_j}
  ={}& \gset[\big]{(w,wx)}{w \in L \setminus (Lc_1a^*b_1 \cup Ld_2 \cup Ld_3)} \\
  & \cup \gset[\big]{(w'c_1a^kb_1,w'c_ja^kb_1d_1)}{w' \in L, k \in \nset\cup\set{0}} \\
  & \cup \gset[\big]{(w'b_j,w'b_1d_1)}{w' \in L \setminus Lc_1a^*} \\
  ={}& \gset[\big]{(w,w)}{w \in L \setminus (Lc_1a^*b_1 \cup Ld_2 \cup Ld_3)}(\emptyword,x) \\
  & \cup \gset[\big]{(w',w')}{w' \in L}(c_1,c_j)(a,a)^*(b_1,b_1)(\emptyword,d_1) \\
  & \cup \gset[\big]{(w',w')}{w' \in L \setminus Lc_1a^*}(b_j,b_1)(\emptyword,d_1).
\end{align*}
Hence each $L_{d_j}$ is a rational relation by \fullref{Proposition}{prop:intersectionwithregular}.

Suppose that $x = a$. Then the only rewriting rules that can apply to $wx$ are a sequence of rule of type
\eqref{eq:katsura2:ba}, rewriting $w'yb_{i_1}\cdots b_{i_k}a$ (where $y \notin \set{b_1,b_2,b_3}$) to
$w'yab_{i_1}\cdots b_{i_k}$. This resulting word is in normal form, since the only way a rewriting rule could apply was
if $y=c_{i_1}$, but this means the word $w$ would contain $c_{i_1}b_{i_1}$, which contradicts $w\in L$. Hence
\begin{align*}
L_a &= \gset[\big]{(w'yb_{j_1}\cdots b_{j_k},w'yab_{j_1}\cdots b_{j_k})}{w'y \in L, b_{j_1},\ldots,b_{j_k} \in \set{1,2,3}, k\in\nset\cup\set{0}} \\
&= \gset[\big]{(w'y,w'y)}{w'y \in L}(\emptyword,a)\gset{(b_j,b_j)}{j \in \set{1,2,3}}^*.
\end{align*}

Finally, if $x \in \set{b_1,c_1,c_2,c_3,d_1}$, then $wx$ is already in normal form: hence, in this case,
\begin{align*}
 L_x = \gset{(w,w)(\emptyword,x)}{w \in L}.
\end{align*}

In each case, $L_x$ is a rational relation. Since $\m{\fdt}{\biauto}$ is homogeneous, if $(u,v) \in L_x$ for $x \in A$, then
$|v| = |ua| = |u| + 1$. Furthermore, if $(u,v) \in L_\emptyword$, then $|u| = |v|$. Hence $L_x^\$$ and ${}^\$L_x$ are
regular for all $x \in A \cup \set{\emptyword}$ by \fullref{Proposition}{prop:rationalbounded}.

Similar reasoning shows that ${}_x L$ is a rational relation: if $x \in \set{b_1,b_2,b_3}$ and $w \in L$, then rewriting
$xw$ to normal form can consist of a sequence of applications of rules of type \eqref{eq:katsura2:ba} followed possibly
by one of type \eqref{eq:katsura:bd}; for all other $x$, at most one rewriting rule is
required. \fullref{Proposition}{prop:rationalbounded} then applies to show that ${}_xL^\$$ and ${}_x^\$L$ are regular.

Hence $(A,L)$ is a biautomatic structure for $\m{\fdt}{\biauto}$.
\end{proof}

\subsection{A non-\fcrs, non-\fdt, \biauto, \auto\ homogeneous monoid}

In this section we give an example of a homogeneous monoid that
is non-\fdt\ and thus non-\fcrs, but which is \biauto\ and thus \auto.

\begin{example}
\label{eg:nonfcrsnonfdtbiautoauto}
Let $A=\{a,b\}$ and let $\rel{R}$ be the rewriting system on $A\cup\{c\}$ consisting of the three rules:
\[
\begin{array}{lclcll}
K_a &:& ac & \imreduces & ca &  \\
K_b &:& bc & \imreduces & cb &  \\
C &:& cab& \imreduces & cbb. &
\end{array}
\]
Let ${\gp}$ be the presentation $\pres{A\cup\{c\}}{\rel{R}}$, and let $\m{\nonfdt}{\biauto}$ be the monoid presented by $\gp$.
\end{example}

\begin{theorem}
\label{thm_exampleNotFDT}
The monoid $\m{\nonfdt}{\biauto}$:
\begin{enumerate}%[{\normalfont (i)}]
\item has $A^* \cup c^+b^*a^*$ as set of unique normal forms (that is, it is a
 set,  over the generating
set $A$, in one-to-one correspondence with the elements of $\m{\nonfdt}{\biauto}$);
\item is \biauto\ and thus \auto;
\item is non-\fdt\ and thus non-\fcrs.
\end{enumerate}
\end{theorem}

Part~1 of \fullref{Theorem}{thm_exampleNotFDT} will follow from \fullref{Lemma}{lem_infCRS} below, and part~2 is proved in \fullref{Lemma}{lem_bio}. Then, the rest of the subsection will be devoted to proving that $\m{\nonfdt}{\biauto}$ is non-\fdt, thus establishing part~3.

\begin{remark}
The methods we use here to prove that \fullref{Example}{eg:nonfcrsnonfdtbiautoauto} is not-\fdt \ are similar to those used in the proof of \cite[Theorem~1]{gray_propertiesnotinherited}. In particular we will use the notion of critical peaks, and resolution of critical peaks, in our proof. We refer the reader to \cite[Section~2]{gray_propertiesnotinherited} for the definitions of these concepts, and their connection with complete rewriting systems
and \fdt.
\end{remark}

Let us begin by fixing some of
the notation.  We start by
adding to $\gp$ infinitely many rules of the form
\[
\overline{C}_u \; : \; cuab \imreduces cubb \; (u\in A^*)
\]
and denote by $\rel{R}'$ the set of all these rules.  Notice first that
$\overline{C}_{\epsilon}$ is precisely the rule $C$ defined above and
that, for any word $u\in A^*$ the words $cuab$ and $cubb$ represent
the same element of the monoid $\m{\nonfdt}{\biauto}$, since in the word $cuab$ we can
use relations of the form $K_x$ to pass the letter $c$ through the word $u$
from left to right,
then replace $cab$ by $cbb$ using the relation $C$, and finally move the $c$ back through $u$ again
from right to left
using
the relations $K_x$.
It follows that the presentations ${\gp} = \pres{A\cup\{c\}}{\rel{R}}$ and
$\overline{{\gp}}=\pres{A\cup\{c\}}{\rel{R} \cup \rel{R}'}$
are equivalent presentations, in the sense that two words $u, v \in (A\cup\{c\})^*$ are equivalent modulo the relations $\rel{R}$ if and only if they are equivalent modulo the relations $\rel{R} \cup \rel{R}'$. In particular, the monoid $\m{\nonfdt}{\biauto}$ is also defined by the infinite presentation
\[
\overline{{\gp}}=\pres{A\cup\{c\}}{\rel{R} \cup \rel{R}'}.
\]

\begin{lemma}
\label{lem_infCRS}
The infinite presentation $\overline{\gp}$ is a complete presentation of $\m{\nonfdt}{\biauto}$.
The set of irreducible words with respect to this complete rewriting system is
\[
  A^* \cup c^+b^*a^* = A^* \cup \gset{c^ib^jc^l}{i \in \nset, j,k \in \nset \cup \set{0}}.
\]
\end{lemma}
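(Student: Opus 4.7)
The plan is to establish both parts of \fullref{Lemma}{lem_infCRS} at once by first showing that the infinite system $\rel{R}\cup\rel{R}'$ is noetherian, then verifying confluence through the critical pairs, and finally reading off the irreducible words from the shape of the left-hand sides.

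For noetherianity, I would assign to each word $w\in(A\cup\{c\})^*$ the pair $(|w|_a, s(w))$, where $s(w)$ is the sum of the positions at which $c$ occurs in $w$, and order these pairs lexicographically on $\nset \times \nset$. Each application of $K_a$ or $K_b$ preserves $|w|_a$ (neither $a$ nor $b$ is created or destroyed) but moves exactly one $c$ one step to the left, so $s(w)$ strictly decreases by $1$. Each application of a rule $\overline{C}_u$ (for $u\in A^*$, including $C = \overline{C}_{\emptyword}$) rewrites one occurrence of $a$ to $b$ and, because $u$ contains no $c$, leaves the positions of all $c$'s in the ambient word unchanged; hence $|w|_a$ strictly decreases by $1$. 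In either case, the measure strictly drops lexicographically, proving noetherianity.

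For confluence, by Newman's lemma it suffices to resolve all critical pairs. Since the only occurrences of the letter $c$ in any left-hand side of $\rel{R}\cup\rel{R}'$ are the final letter of $ac$ and $bc$ and the initial letter of $cuab$, the only nontrivial critical situations are: (a) the suffix-prefix overlap giving the word $acuab$, resolving to $caubb$ by applying $K_a$ then $\overline{C}_{au}$ on one side, and $\overline{C}_u$ then $K_a$ on the other; (b) the analogous overlap $bcuab$, resolving to $cbubb$; (c) the overlap $cuabc$, resolving to $cucbb$ via either $\overline{C}_u$ followed by two applications of $K_b$, or $K_b$ then $K_a$ then $\overline{C}_{\emptyword}$; and (d) the containment in which $cuab$ sits as a prefix of $cvab$ whenever $v = uabx$ for some $x\in A^*$, where $\overline{C}_v$ applied to the whole gives $cuabxbb$, which reduces via $\overline{C}_u$ to $cubbxbb$, while $\overline{C}_u$ applied to the prefix gives $cubbxab$, which reduces via $\overline{C}_{ubbx}$ to the same $cubbxbb$. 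These resolutions exhaust all critical situations, so the system is confluent and hence complete.

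To identify the normal forms, observe that a word $w$ is irreducible iff it avoids $ac$, $bc$, and every $cuab$ (for $u\in A^*$) as a subword. The first two conditions force every letter $c$ in $w$ (if any) to be preceded only by another $c$, so the $c$'s form a single prefix block and $w\in A^* \cup c^+A^*$. In the latter case $w = c^k v$ with $v\in A^*$, and avoidance of $cuab$ subwords is equivalent to $v$ containing no occurrence of $ab$, so $v\in b^*a^*$. Together this yields the advertised set $A^* \cup c^+b^*a^*$. The main obstacle will be case (d) of the confluence check: the infinite family $\{\overline{C}_u\}_{u\in A^*}$ admits a genuine containment of one left-hand side inside another rather than just a proper overlap, and the joinability of this critical pair requires the rule $\overline{C}_{ubbx}$, which is precisely why the enrichment from $\gp$ to $\overline{\gp}$ is needed to obtain completeness.
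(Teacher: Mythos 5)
Your proof is correct and follows essentially the same route as the paper: show the system is noetherian, resolve all critical pairs, and identify the irreducibles from the shape of the left-hand sides. The only notable divergence is the termination argument: you use the ad hoc lexicographic measure $(|w|_a, s(w))$, where $s(w)$ is the sum of the positions of the $c$'s, whereas the paper invokes the shortlex order on $\{a,b,c\}^*$ induced by $a>b>c$; since every rule is length-preserving, the paper's check amounts to three quick lexicographic comparisons within a fixed length class. Both arguments are valid; the paper's is more compact and standard, yours makes explicit which quantity each rule shrinks. Your critical situations (a)--(d) correspond precisely to the paper's peaks $(\overline{CT1})$--$(\overline{CT3})$ (you simply split the first into the $x=a$ and $x=b$ subcases), including the containment case (d) which the paper records as $(\overline{CT3})$, and your derivation of $A^*\cup c^+b^*a^*$ is a correct unpacking of what the paper calls ``easily seen.''
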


\begin{proof}
The fact that $\overline{\gp}$ is a presentation for $\m{\nonfdt}{\biauto}$ follows from the comments made before the statement of the lemma. By considering the (left-to-right) length-plus-lexicographic ordering
on $\{a,b,c\}^*$ induced by $a>b>c$ one sees that the rewriting system
$\overline{\gp}$ is noetherian.

The set of irreducible words under this
rewriting system is  the set $A^* \cup c^+b^*a^*$. Indeed, if a irreducible
word contains a symbol $c$, it cannot be to the left of a symbol from $A$,
otherwise we could apply a relation $K_a$ or $K_b$. Moreover, if the word
also contains a symbol $b$, then all symbols $a$ must be to the right of the
rightmost symbol $b$, since otherwise we could use a relation
of the form $\overline{C}_u$.

Finally, to prove that $\overline{\gp}$ is confluent it suffices to consider all possible overlaps between left-hand sides of the rewriting rules $K_x$ ($x \in A$) and
$\overline{C}_u$ ($u \in A^*$), showing that all critical peaks arising from these overlaps resolve (see \cite[Section~2]{gray_propertiesnotinherited}). There are three different ways in which these rewrite rules can overlap, giving rise to three types of critical peaks, all of which can be resolved; see \fullref{Figure}{Figure_resolutions}. This proves that $\overline{\gp}$ is confluent and thus completes the proof of the lemma.
\end{proof}

\begin{lemma}\label{lem_bio}
The monoid $\m{\nonfdt}{\biauto}$ is \biauto.
\end{lemma}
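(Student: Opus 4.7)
The plan is to show that $(A \cup \{c\}, L)$ is a biautomatic structure for $M$, taking $L = A^* \cup c^+ b^* a^*$, which is regular and, by \fullref{Lemma}{lem_infCRS}, consists of exactly one representative for each element of $M$. Since the defining relations of $M$ are length-preserving, every $(u,v) \in L_x \cup {}_xL$ satisfies $\bigl||u| - |v|\bigr| \leq 1$ for each $x \in A \cup \{c\} \cup \{\emptyword\}$; thus by \fullref{Proposition}{prop:rationalbounded} it suffices to verify that $L_x$ and ${}_xL$ are rational relations, since then each of $L_x\rpad$, $L_x\lpad$, ${}_xL\rpad$ and ${}_xL\lpad$ is automatically regular.

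To achieve this, I would perform a case analysis that, for each generator $x$ and each $u \in L$, explicitly identifies the unique normal form of $ux$ and $xu$, splitting on whether $u \in A^*$ or $u = c^n b^p a^q$. Most cases are routine: multiplying $u \in A^*$ by $a$ or $b$ on either side leaves $u$ in $A^*$ in normal form; and multiplying $u = c^n b^p a^q$ by $c$ on either side, or by $a$ on the right, produces another word of the same shape after pushing any new $c$ leftward using the rules $K_a, K_b$. The substantive cases are right or left multiplication of $w \in A^*$ by $c$, and multiplication of $c^n b^p a^q$ by $b$ on the right or by $a$ on the left; these are handled by reductions in $\overline{\gp}$ iterating the family $\overline{C}_w$. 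Representative identities are $cw =_M wc =_M c\,b^{|w|-k}\,a^k$ for $w \in A^*$ with a trailing block of exactly $k$ letters $a$, and $c^n b^p a^q \cdot b =_M c^n b^{p+q+1}$, as well as $a \cdot c^n b^p a^q =_M c^n b^{p+1} a^q$ when $p \geq 1$ (and $c^n a^{q+1}$ when $p = 0$).

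Each of the resulting relations $L_x$ and ${}_xL$ is visibly rational: it is either the identity on a regular sub-language of $L$, or it amounts to a bounded finite-state edit at a non-deterministically guessed position (the boundary between the $b$- and $a$-blocks of a word in $c^+ b^* a^*$, or the start of the trailing $a$-block of $w \in A^*$). Combined with the length bound and \fullref{Proposition}{prop:rationalbounded}, this shows that all four of $L_x\rpad$, $L_x\lpad$, ${}_xL\rpad$ and ${}_xL\lpad$ are regular, establishing that $(A \cup \{c\}, L)$ is a biautomatic structure.

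The main technical step will be verifying the key reduction $c^n b^p a^q b \reduces c^n b^{p+q+1}$ for $q \geq 1$ in the complete rewriting system $\overline{\gp}$. This is proved by induction on $q$ using the infinite family $\{\overline{C}_w : w \in A^*\}$, successively rewriting $c b^p a^{q-i} b^{i+1}$ to $c b^p a^{q-i-1} b^{i+2}$ via $\overline{C}_{b^p a^{q-i-1}}$; the analogous computation for $cw$ (pushing the $c$ past $w$ and collapsing each resulting $ab$ by an application of $\overline{C}_{\cdot}$) is similar. Once these normal-form formulas are in hand, the remainder of the argument is essentially bookkeeping that the resulting pair-relations are rational, whence $M$ is \biauto.
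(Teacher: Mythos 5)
Your proposal is correct and follows essentially the same route as the paper's proof: the same language of normal forms $L = A^* \cup c^+b^*a^*$, the same observation that homogeneity bounds $\bigl||u|-|v|\bigr|\leq 1$ so that \fullref{Proposition}{prop:rationalbounded} reduces the problem to showing each $L_x$ and ${}_xL$ is a rational relation, and the same explicit normal-form identities for the products (the paper's formula $(uba^k,cb^{|u|+1}a^k)$ for $L_c$ is your $cw =_M cb^{|w|-k}a^k$ in disguise). The paper simply records the six relations $L_x, {}_xL$ in closed form rather than narrating the case split, but the content is identical.
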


\begin{proof}
Let $L = A^* \cup c^+b^*a^*$. We will prove that $(A\cup\{c\},L)$ is a
  biautomatic structure for $\m{\nonfdt}{\biauto}$. By the previous lemma, $L$ is a regular
  language such that every element of $\m{\nonfdt}{\biauto}$ has a unique representative in $L$.
  Hence
  \[
    L_\emptyword = \gset{(w,w)}{w\in L};
  \]
  thus $L_\emptyword$ is a rational relation.

  Now let $u \in L$. Regardless of whether $u \in A^*$ or $u \in c^+b^*a^*$, the word $ua$ also lies in $L$. Hence
  \[
    L_a = \gset{(u,ua)}{u \in L} = \gset{(u,u)}{u \in L}(\emptyword,a)
  \]
  is a rational relation by \fullref{Proposition}{prop:idrational}.

  If $u \in A^*$, then $ub$ also lies in $L$. On the other hand, if $u = c^ib^ja^k$, then
  $ub = c^ib^ja^kb \reduces c^ib^{j+k+1}$ via a sequence of applications of rules $\overline{C}_u$. Hence
  \begin{align*}
    L_b &= \gset{(u,ub)}{u \in A^*} \cup \gset{(c^ib^ja^k,c^ib^{j+k+1})}{i \geq 1, j,k \geq 0} \\
    &= \gset{(u,u)}{u \in A^*}(\emptyword,b) \cup (c,c)^+(b,b)^*(a,b)^*(\emptyword,a)
  \end{align*}
  is a rational relation by \fullref{Proposition}{prop:idrational}.

  If $u=a^k$, then $uc = a^kc \reduces ca^k$ using a sequence of applications of rules $K_a$. If $u \in A^*$ and $u$
  contains at least one symbol $b$, then $u = u'ba^k$ for some $u' \in A^*$ and $k \in \nset\cup\set{0}$ and so $uc = u'ba^kc \reduces cu'ba^k \reduces cb^{|u'|+1}a^k$ by a sequences of applications of rules of $K_a$ and $K_b$ and a sequence of applications of rules $\overline{C}_u$. Hence
  \begin{align*}
    L_c ={}& \gset{(a^k,ca^k)}{k \geq 0}\\
           & \cup \gset{(uba^k,cb^{|u|+1}a^k)}{u \in A^*, k \geq 0} \\
           &\cup \gset{(c^ib^ja^k,c^{i+1}b^ja^k)}{i \geq 1, j,k \geq 0} \\
        ={}& (\emptyword,c)(a,a)^*\\
           &\cup (\emptyword,c)\set{(a,b),(b,b)}^*(b,b)(a,a)^* \\
           &\cup (\emptyword,c)(c,c)^+(b,b)^*(a,a)^*
  \end{align*}
  is a rational relation.

  Similar reasoning shows that
\begin{align*}
  {}_aL ={} & \gset{(u,au)}{u \in A^*}                                                         \\
            & \cup \gset{(c^ia^k,c^ia^{k+1})}{i\geq 1, k\geq 0}                                \\
            & \cup \gset{(c^ib^ja^k,c^ib^{j+1}a^k)}{i,j \geq 1, k \geq 0}                      \\
  ={} & (\emptyword,a)\set{(a,a),(b,b)}^*                                                \\
            & \cup (c,c)^+(a,a)^*(\emptyword,a) \\
            & \cup (c,c)^+(b,b)^+(\emptyword,b)(a,a)^*; \displaybreak[0]\\
  {}_bL ={} & \gset{(u,bu)}{u \in A^*} \\
            & \cup \gset{(c^ib^ja^k,c^ib^{j+1}a^k)}{i \geq 1, j,k \geq 0} \\
  ={} & (\emptyword,b)\set{(a,a),(b,b)}^* \\
            & \cup (c,c)^+(\emptyword,b)(b,b)^+(aa,a)^* \displaybreak[0]\\
  {}_cL ={}   & \gset{(a^k,ca^k)}{k \geq 0}; \\
            & \cup \gset{(uba^k,cb^{|u|+1}a^k)}{u \in A^*, k \geq 0}   \\
            & \cup \gset{(c^ib^ja^k,c^{i+1}b^ja^k)}{i \geq 1, j,k \geq 0} \\
  ={} & (\emptyword,c)(a,a)^* \\
            & \cup (\emptyword,c)\set{(a,b),(b,b)}^*(b,b)(a,a)^* \\
            & \cup (\emptyword,c)(c,c)^+(b,b)^*(a,a)^*;
\end{align*}
thus ${}_aL$, ${}_bL$, and ${}_cL$ are all rational.

Since $(u,v) \in {}_xL \cup L_x$ for any $x \in A \cup \emptyword$ implies $\abs[\big]{|u|-|v| \leq 1}$,
\fullref{Proposition}{prop:rationalbounded} shows that their images under $\textvisiblespace^\$$ and ${}^\$\textvisiblespace$ are regular. Hence $(A,L)$ is
a biautomatic structure for $\m{\nonfdt}{\biauto}$.
\end{proof}

Let $\Gamma$ denote the
derivation graph of $\gp$, and $\overline{\Gamma}$ the derivation graph of $\overline{\gp}$. Let $\Gamma_Z$ denote the  connected components of $\Gamma$ with
vertex set the set of all words in $A\cup\{c\}$ with at least two occurrences of the letter $c$.  Likewise let $\overline{\Gamma}_Z$ be the connected component of
$\overline{\Gamma}$ with the same vertex set as $\Gamma_Z$.

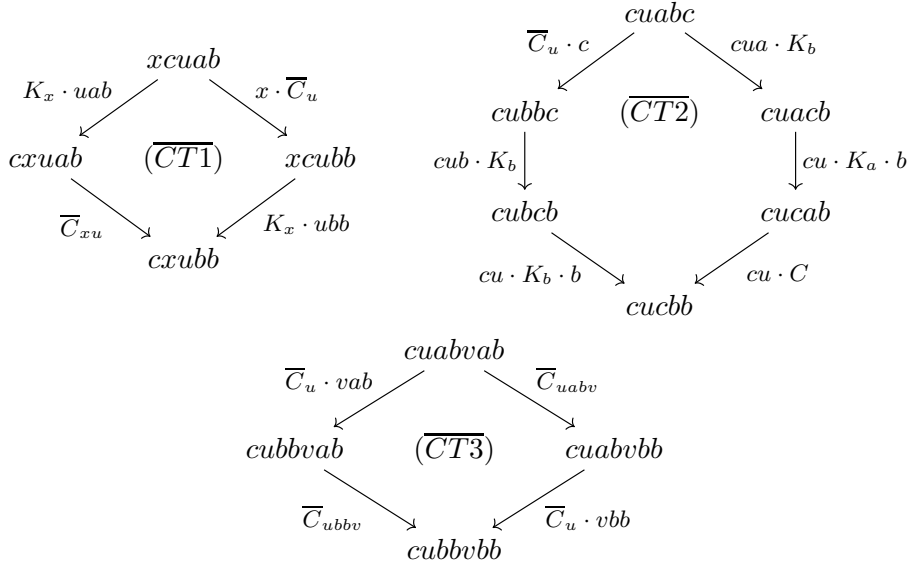
\begin{figure}[t]
\centering
\begin{tabular}{c@{\qquad}c}
\begin{tikzcd}[column sep=1.2em]
\null &	 xcuab \arrow[swap]{dl}{K_x \cdot uab} \arrow{dr}{x\cdot \overline{C}_{u}} & \\
cxuab \arrow[swap]{dr}{\overline{C}_{xu}} & \fdtpath{(\overline{CT1})} & xcubb \arrow{dl}{K_x\cdot ubb} \\
& cxubb &
\end{tikzcd}
&
\begin{tikzcd}[column sep=1.2em]
\null & cuabc \arrow[swap]{dl}{\overline{C}_{u} \cdot c}  \arrow{dr}{cua\cdot K_b} & \\
cubbc \arrow[swap]{d}{cub\cdot K_b} & \fdtpath{(\overline{CT2})} & cuacb \arrow{d}{cu\cdot K_a\cdot b}	\\
cubcb \arrow[swap]{dr}{cu\cdot K_b\cdot b  }	& & cucab \arrow{dl}{\ cu\cdot C} \\
& cucbb	&
\end{tikzcd}
\end{tabular}
\begin{tikzcd}[column sep=1.2em]
\null &	cuabvab \arrow[swap]{dl}{\overline{C}_{u}\cdot vab} \arrow{dr}{\overline{C}_{uabv}} & \\
cubbvab \arrow[swap]{dr}{\overline{C}_{ubbv}}	& \fdtpath{(\overline{CT3})} & cuabvbb \arrow{dl}{\overline{C}_{u}\cdot vbb} \\
& cubbvbb &
\end{tikzcd}
\caption{Resolutions of the critical peaks in the derivation graph $\overline{\Gamma}$ of the presentation $\overline{\gp}$. Here $x,y\in A$ and $u,v\in A^*$.}
\label{Figure_resolutions}
\end{figure}

There are three infinite families $\fdtpath{(\overline{CT1})}$,
$\fdtpath{(\overline{CT2})}$ and $\fdtpath{(\overline{CT3})}$ of closed paths
 in $\overline{\Gamma}$, as displayed in \fullref{Figure}{Figure_resolutions},
that correspond to resolutions of all critical peaks.  Each such
closed path we obtain we shall call a
\defterm{critical circuit}.
Let us denote by $\overline{\cal C}$ the critical circuits  of the form
$\fdtpath{(\overline{CT1})}$ and $\fdtpath{(\overline{CT3})}$
  and denote by $\overline{\cal
Z}$ the critical circuits of the form  $\fdtpath{(\overline{CT2})}$. Observe
that the critical circuits in  $\overline{\cal Z}$  are in
$\overline{\Gamma}_Z$ since the words labelling vertices in
$\fdtpath{(\overline{CT2})}$ all contain two occurrences of the letter $c$.
The set of critical circuits $\overline{\cal C}\cup \overline{\cal Z}$ forms an infinite homotopy base for $\overline{\Gamma}$ (see \cite[Lemma~3]{gray_propertiesnotinherited}).

We now want to use the infinite homotopy base $\overline{\cal C}\cup \overline{\cal Z}$ for $\overline{\Gamma}$ to obtain an infinite homotopy base for $\Gamma$. In order to do this we need to take the critical circuits $\fdtpath{(\overline{CT1})}$--$\fdtpath{(\overline{CT3})}$ and transform them into circuits in the derivation graph $\Gamma$ by replacing each occurrence of an edge $\overline{C}_u$ by a corresponding path in $\Gamma$.

As mentioned above when proving that $\gp$ and $\overline{\gp}$ are equivalent presentations,
the edges $\overline{C}_u$ can be realized in $\Gamma$ by paths $C_u$ which are defined inductively as follows: we first set  ${C}_{\epsilon}$ to be the rule $C$, and then for $u=xu'$, with $x\in A$ and $u'\in A^*$ we set $C_u$ to be the path
\begin{align}
\label{eqn_C}
C_u: \quad
cxu'a b
\xrightarrow{K_x^{-1} \cdot u' a b}
xcu'a b
\xrightarrow{x \cdot {C}_{u'}}
xcu'b b
\xrightarrow{K_x \cdot u' b b}
cxu'b b.
\end{align}
So, $C_u$ is the path in $\Gamma$ from $cuab$ to $cubb$ given by commuting $c$ through $u$ using the relations $K_x$, applying the relation $C$ to transform $ucab$ into $ucbb$, and then commuting $c$ back through $u$ again using the relations $K_x$, ending at the vertex $cubb$.

Now let us define a mapping $\varphi$ from the set of paths $P(\overline{\Gamma })$ in $\overline{\Gamma}$ to the set of paths  $P(\Gamma )$ in $\Gamma$.
Let $\varphi: P(\overline{\Gamma })\rightarrow P(\Gamma )$ be the map given by $(\alpha \cdot\overline{C}_u\cdot\beta) \varphi=\alpha \cdot C_u\cdot\beta$, for all $\alpha,\beta\in (A\cup\{c\})^*$ and $u\in A^*$, and defined to be the identity on every other edge of $\overline{\Gamma}$. Let ${\cal C} = (\overline{\cal C})\varphi$ and ${\cal Z}=(\overline{\cal Z})\varphi$.
Since $\overline{\cal C}\cup \overline{\cal Z}$ forms a homotopy base for  $\overline{\Gamma}$ it follows that ${\cal C}\cup {\cal Z}$ is an infinite homotopy base for $\Gamma$ (see \cite[Corollary~3.7]{squier_finiteness}).

Observe that the infinite homotopy base ${\cal C}\cup {\cal Z}$ for $\Gamma$ is nothing more than the set of circuits in $\Gamma$ obtained by taking the set of circuits $\fdtpath{(\overline{CT1})}$--$\fdtpath{(\overline{CT3})}$ and replacing each occurrence of the edge $\overline{C}_u$ by the path $C_u$ defined in \eqref{eqn_C}. Let us denote this corresponding set of circuits ${\cal C}\cup {\cal Z}$ in $\Gamma$ by $\fdtpath{({CT1})}$--$\fdtpath{({CT3})}$.

The monoid $\m{\nonfdt}{\biauto}$ is presented by the finite presentation $\gp$ and  $\Gamma$ is
the derivation graph of $\gp$ which has an infinite homotopy base  ${\cal
  C}\cup{\cal Z}$.

\begin{lemma}[{\cite[Discussion before Definition 3.3]{squier_finiteness}}]
  \label{lem:finitesubset}
  If $\m{\nonfdt}{\biauto}$ were \fdt\ then there would be a finite subset ${\cal C}_0\cup{\cal Z}_0$ of
  ${\cal C}\cup{\cal Z}$ which would be a finite homotopy base for $\Gamma$.
\end{lemma}

\begin{proof}[Sketch]
  Since $\mathcal{D}$ is a finite homotopy base, each path in $\mathcal{D}$ is homotopic to an empty path using finitely
  many paths from ${\cal C}\cup{\cal Z}$, and thus the finite subset of ${\cal C}\cup{\cal Z}$ consisting of all paths
  arising this way is a homotopy base.
\end{proof}

Our aim now
is to show that this leads to a contradiction, and thus conclude that $\m{\nonfdt}{\biauto}$ is not
\fdt. In order to do this we shall now define a mapping from the set of paths
$P(\Gamma)$ in $\Gamma$ into the integral monoid ring $\mathbb{Z}\monly$. Define
$\Phi: P(\Gamma) \rightarrow \ZM$ to be the unique map which extends the
mapping:
\begin{itemize}
\item $(\alpha \cdot K_a \cdot \beta)\Phi = \overline{\alpha}$, \quad $(\alpha \cdot K_b \cdot \beta)\Phi = \overline{\alpha}$, \quad and
\item $(\alpha \cdot C \cdot \beta)\Phi = \overline{\alpha }$,
\end{itemize}
where $\alpha,\beta \in (A\cup\{c\})^*$
and
$\overline{\alpha} \in \m{\nonfdt}{\biauto}$ denotes the element of $\m{\nonfdt}{\biauto}$ represented by the word $\alpha$,
to paths
in such a way that
\[
(\bbp \circ \bbq)\Phi = (\bbp)\Phi + (\bbq)\Phi
\;\; \mbox{and} \;\;
(\bbp^{-1})\Phi = - (\bbp)\Phi.
\]
The following basic properties of $\Phi$ are then easily verified for all paths $\bbp, \bbq \in P(\Gamma)$ and words $\alpha,\beta \in (A\cup\{c\})^*$:
\begin{enumerate}%%[(i)]
\item $(\alpha \cdot \bbp \cdot \beta)\Phi = \overline{\alpha} \cdot(\bbp)\Phi  $
\item $(\bbp \circ \bbp^{-1})\Phi = 0$
\item $([\bbp,\bbq])\Phi = 0$ where
\[
[\bbp,\bbq] =
(\bbp \cdot \iota \bbq)
\circ
(\tau \bbp \cdot \bbq)
\circ
(\bbp^{-1} \cdot \tau \bbq)
\circ
(\iota \bbp \cdot \bbq^{-1}).
\]
\item If $\bbp \sim_0 \bbq$ then $(\bbp)\Phi = (\bbq)\Phi$.
\end{enumerate}
Here, property~4 follows from properties~1, 2 and~3. Note that property~4 implies that $\Phi$ induces a well-defined map on the homotopy classes of paths of $\Gamma$.

In what follows we shall often omit bars from the top of words in the images under $\Phi$ and simply write words from $(A\cup\{c\})^*$ with the obvious intended meaning. Recall that $\overline{\cal C}$ is the set of critical circuits  of the form $\fdtpath{(\overline{CT1})}$ and $\fdtpath{(\overline{CT3})}$, and that $\mathcal{C}=(\overline{\cal C})\varphi $ is the corresponding set of circuits in $\Gamma$.
Let $F=A^*$ denote the free monoid on the alphabet $A$.

\begin{lemma}
\label{lem_passtoZG}
If $\m{\nonfdt}{\biauto}$ is \fdt\ then the submodule $\lb (\mathcal{C})\Phi \rb_{\mathbb{Z}F}$, of the left $\mathbb{Z}F$-module $\mathbb{Z}F$,
generated by $(\mathcal{C})\Phi$
is a finitely generated left $\mathbb{Z}F$-module.
\end{lemma}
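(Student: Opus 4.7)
The plan is to use the \fdt\ hypothesis to extract a finite homotopy base $\mathcal{B}$ of $\Gamma$ and then to show, via a $c$-counting argument, that $\lb(\mathcal{C})\Phi\rb_{\mathbb{Z}F}$ coincides with the $\mathbb{Z}F$-submodule generated by the finitely many $\Phi$-images of those circuits of $\mathcal{B}$ whose base vertex contains exactly one occurrence of~$c$.

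Suppose $M$ is \fdt. Then by \cite[Theorem~4.3]{Squier1} the finite presentation $\gp$ admits a finite homotopy base $\mathcal{B}$ for $\Gamma$. Every circuit of $\mathcal{C}$ is of type $\fdtpath{(CT1)}$ or $\fdtpath{(CT3)}$ and is based at a vertex containing exactly one~$c$, so it lies entirely in the connected component $\Gamma^{(1)}$ of $\Gamma$ whose vertices are the words with precisely one~$c$. Set $\mathcal{B}'=\{\bbb\in\mathcal{B}:\iota\bbb\in\Gamma^{(1)}\}$, a finite subset of $\mathcal{B}$.

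The key technical step is the following. For any closed path $\bbp$ in $\Gamma^{(1)}$ and any homotopy base $\mathcal{H}$ for $\Gamma$, the reduction $\bbp\sim_\mathcal{H} 1_{\iota\bbp}$ can only use circuits $\bbb\in\mathcal{H}$ with $\iota\bbb\in\Gamma^{(1)}$, and each application of such a $\bbb$ must occur in a context $\alpha\cdot\bbb\cdot\beta$ with $\alpha,\beta\in A^*=F$. Indeed, every rule of $\rel{R}$ has exactly one~$c$ on each side, so intermediate paths in the reduction remain in $\Gamma^{(1)}$ and the equality $|\alpha|_c+|\iota\bbb|_c+|\beta|_c=1$ must hold. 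The case $|\iota\bbb|_c=0$ would force $\bbb$ trivial (no rule applies); otherwise $|\iota\bbb|_c=1$, which forces $\alpha,\beta\in F$. Moreover, since each edge of any path in $\Gamma^{(1)}$ has the form $\gamma\cdot r\cdot\delta$ with $\gamma,\delta\in A^*$, the $\Phi$-image of any closed path in $\Gamma^{(1)}$ lies in $\mathbb{Z}F$.

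Apply this observation with $\mathcal{H}=\mathcal{B}$ and $\bbp=\bbr\in\mathcal{C}$. Using property~4 of $\Phi$ (so that only moves involving $\mathcal{B}$-pairs affect $\Phi$), we obtain
\[
\Phi(\bbr)=\sum_{j=1}^{N}\epsilon_j\,\overline{w_j}\,\Phi(\bbb_j),\qquad\epsilon_j\in\{\pm1\},\ w_j\in F,\ \bbb_j\in\mathcal{B}',
\]
which gives $(\mathcal{C})\Phi\subseteq\lb(\mathcal{B}')\Phi\rb_{\mathbb{Z}F}$. Running the same argument with $\mathcal{H}=\mathcal{C}\cup\mathcal{Z}$ and $\bbp=\bbb\in\mathcal{B}'$ (observing that every circuit in $\mathcal{Z}$ is based at a word with two $c$'s, so cannot occur in such a reduction) yields $\Phi(\bbb)\in\lb(\mathcal{C})\Phi\rb_{\mathbb{Z}F}$ for each $\bbb\in\mathcal{B}'$, giving the reverse containment. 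The two $\mathbb{Z}F$-submodules therefore coincide and are finitely generated by the finite set $(\mathcal{B}')\Phi$. The main obstacle is the $c$-counting step, which both excludes circuits in $\mathcal{Z}$ from the reduction and pins the coefficients $\overline{w_j}$ down to $F$ rather than to the larger monoid $M$.
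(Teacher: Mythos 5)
Your proof is correct, and your $c$-counting argument is essentially the same as the paper's, but your overall route is slightly different and a bit longer. The paper exploits the fact (already established before the lemma) that $\mathcal{C}\cup\mathcal{Z}$ is itself a homotopy base: since $M$ being \fdt\ implies every homotopy base of $\Gamma$ contains a \emph{finite sub}-homotopy base, one may pick $\mathcal{C}_0\cup\mathcal{Z}_0$ with $\mathcal{C}_0\subseteq\mathcal{C}$ and $\mathcal{Z}_0\subseteq\mathcal{Z}$. Then a single application of the decomposition of \cite[Lemma~2]{Gray2011} plus the $c$-count gives $(\mathcal{C})\Phi\subseteq\lb(\mathcal{C}_0)\Phi\rb_{\mathbb{Z}F}$, and since $(\mathcal{C}_0)\Phi$ automatically lies inside $\lb(\mathcal{C})\Phi\rb_{\mathbb{Z}F}$, the two submodules coincide and the lemma follows with no further work. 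You instead fix an arbitrary finite homotopy base $\mathcal{B}$ and prove \emph{two} inclusions, the second one (showing $(\mathcal{B}')\Phi\subseteq\lb(\mathcal{C})\Phi\rb_{\mathbb{Z}F}$) requiring a second pass through the same $c$-counting argument with $\mathcal{C}\cup\mathcal{Z}$ as the homotopy base. You correctly notice that the forward inclusion alone would be insufficient, since $\mathbb{Z}F$ is not Noetherian, and your reverse-inclusion step fills that gap soundly. So your argument works, and it is slightly more robust in the sense that it does not require the finite homotopy base to sit inside $\mathcal{C}\cup\mathcal{Z}$; but it does not avoid the dependence on $\mathcal{C}\cup\mathcal{Z}$ being a homotopy base (used in the reverse inclusion), and it doubles the use of the decomposition argument. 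The paper's choice of $\mathcal{C}_0\subseteq\mathcal{C}$ is precisely what streamlines the proof to one direction.
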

\begin{proof}
Assume that  $\m{\nonfdt}{\biauto}$ is \fdt\ and therefore $\gp$ is \fdt. Since $\mathcal{C}\cup\mathcal{Z}$ is a homotopy base for its derivation graph $\Gamma$, by \fullref{Lemma}{lem:finitesubset}
there are finite subsets $\mathcal{C}_0 \subseteq \mathcal{C}$ and  $\mathcal{Z}_0 \subseteq \mathcal{Z}$ such that
$\mathcal{C}_0\cup \mathcal{Z}_0$ is a finite homotopy base for $\Gamma$.
 Let $\bbp \in \mathcal{C}$ be
arbitrary. We claim that $(\bbp)\Phi \in \lb (\mathcal{C}_0)\Phi
\rb_{\mathbb{Z}F}$. Once established, this will prove the lemma, since
$(\mathcal{C}_0)\Phi$ is a finite subset of $\lb (\mathcal{C})\Phi
\rb_{\mathbb{Z}F}$.

By \cite[Lemma~2]{gray_propertiesnotinherited}, since $\bbp$ is a closed path in $\Gamma$ and
$\mathcal{C}_0\cup\mathcal{Z}_0$ is a homotopy base for $\Gamma$,
we can write
\begin{align}
\label{eqn_pathdecomp}
\bbp \sim_0
\bbp_1^{-1} \circ (\alpha_1 \cdot \bbq_1 \cdot \beta_1) \circ \bbp_1
\circ \cdots \circ
\bbp_n^{-1} \circ (\alpha_n \cdot  \bbq_n \cdot  \beta_n) \circ \bbp_n,
\end{align}
where each $\bbp_i \in P(\Gamma)$, $\alpha_i, \beta_i \in (A\cup\{c\})^*$ and $\bbq_i \in (\mathcal{C}_0\cup \mathcal{Z}_0)^{\pm 1}$.

Since the vertices of $\bbp$ have exactly one $c$, and all the relations
in the presentation $\gp$ involve the letter $c$, it follows that $\alpha_i,
\beta_i \in A^*$ and $\bbq_i\in \mathcal{C}_0$, for all $i\in\{1,\ldots,n\}$.
Applying $\Phi$  gives
$$ (\bbp)\Phi= \alpha_1(\bbq_1)\Phi + \cdots +\alpha_n(\bbq_n)\Phi\in \lb
(\mathcal{C}_0)\Phi \rb_{\mathbb{Z}F}$$ as claimed.
\end{proof}

To complete our proof, it remains to compute the subset $(\mathcal{C})\Phi$ of
$\mathbb{Z}F$ and then prove that the submodule $\lb (\mathcal{C})\Phi
\rb_{\mathbb{Z}F}$ of $\mathbb{Z}F$ is not finitely generated as a left
$\mathbb{Z}F$-module.
Recall that $\overline{\cal C}$ is the set of critical circuits  of the form $\fdtpath{(\overline{CT1})}$ and $\fdtpath{(\overline{CT3})}$, and that $\mathcal{C}=(\overline{\cal C})\varphi $.
So  $\mathcal{C}$ is the set of closed paths $\fdtpath{(CT1)}$ and
$\fdtpath{(CT3)}$ obtained by applying the mapping $\varphi$ to the closed paths
$\fdtpath{(\overline{CT1})}$ and $\fdtpath{(\overline{CT3})}$, that is, obtained
by taking each occurrence of $\overline{C}_u$ and replacing it by the path
$C_u$.

From  equation \eqref{eqn_C} we have, for any word $u \in A^*$, if
$u=xu'$, with $x\in A$,  the equality $(C_u)\Phi=-1_\monly+\overline{x}\cdot
(C_{u'})\Phi +1_\monly=\overline{x}\cdot (C_{u'})\Phi $, and thus
we deduce that $(C_{u})\Phi = u$. Using this fact, the result of computing
 a critical circuit in $\mathcal{C}$ under the map $\Phi$   is
given by:
\begin{itemize}
 \item $(\bbp)\Phi=0_{\mathbb{Z}\monly}$, for a critical circuit $\bbp$ of the
family $\fdtpath{(CT1)}$;
\item $(\bbp)\Phi=u(ab-bb)v$, for a critical circuit $\bbp$ of the
family $\fdtpath{(CT3)}$.
\end{itemize}

%
%
% \begin{table}[ht]
% \centering
% \begin{tabular}{c@{\hskip 1cm}l}
% \toprule
% Circuit type of $\bbp$ & \quad  $(\bbp)\Phi$ \\
% \midrule
%   $\fdtpath{(CT1)}$  & $0_{\mathbb{Z}\monly}$ \\
% %  (CT2)  & $cu(ab-bb)+ cu-u =_{\mathbb{Z}\monly} cu-u$ \\
%   $\fdtpath{(CT3)}$  & $u(ab-bb)v$ \\
% \bottomrule
% \end{tabular}
% \caption{The images under $\Phi$ of the critical circuits from $\mathcal{C}$.}
% \label{table:circuitimages}
% \end{table}

The next lemma completes the proof of \fullref{Theorem}{thm_exampleNotFDT}.

\begin{lemma}
The submodule $\lb (\mathcal{C})\Phi \rb_{\mathbb{Z}F}$ of $\mathbb{Z}F$, where
\[
(\mathcal{C})\Phi=\{ u(ab-bb)v \mid u,v\in A^*\}\cup\{0_{\mathbb{Z}\monly}\},
\]
is not finitely generated as a left $\mathbb{Z}F$-module, and therefore $\m{\nonfdt}{\biauto}$ is not \fdt.
\end{lemma}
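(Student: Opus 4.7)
The plan is to assume, for contradiction, that $I := \lb (\mathcal{C})\Phi \rb_{\mathbb{Z}F}$ is generated as a left $\mathbb{Z}F$-module by finitely many elements $f_1,\dots,f_n \in I$, and to exhibit a specific element of $I$ that cannot lie in $\sum_{i=1}^n \mathbb{Z}F\cdot f_i$. Together with \fullref{Lemma}{lem_passtoZG}, this will establish that $M$ is not \fdt.

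The pivotal observation is that no element of $I$ can have any word of the form $a^r$ (with $r\ge 0$, including the empty word) in its support when regarded as an element of the free $\mathbb{Z}$-module on basis $F$. Indeed, every generator of $I$ has the form $u(ab-bb)v = uabv - ubbv$, and both basis words $uabv$ and $ubbv$ contain the letter $b$; hence any $\mathbb{Z}$-linear combination of such generators has support disjoint from $\{a^r:r\ge 0\}$.

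Now set $N := \max\{|z| : z \in \mathrm{supp}(f_i),\ 1\le i\le n\}$ and fix an integer $m>N$. The test element $(ab-bb)a^m$ lies in $I$, and I claim it is not of the form $\sum_i h_i f_i$ with $h_i\in\mathbb{Z}F$. If it were, then expanding $h_i = \sum_l d_{i,l} u_{i,l}$ and $f_i = \sum_k c_{i,k} z_{i,k}$ as $\mathbb{Z}$-linear combinations of words in $F$, the equality would become
\[
aba^m - bba^m \;=\; \sum_{i,k,l} d_{i,l}\,c_{i,k}\, u_{i,l} z_{i,k}.
\]
Comparing the coefficient of the basis word $aba^m$: on the left it equals $+1$, while on the right a nonzero contribution requires $u_{i,l} z_{i,k} = aba^m$ as a word of $F$, so $z_{i,k}$ must be a suffix of $aba^m$ of length at most $N < m$. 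Every such suffix is either the empty word or $a^r$ with $1\le r\le N$, and by the pivotal observation none of these can belong to any $\mathrm{supp}(f_i)$. The right-hand coefficient is thus $0$, yielding the contradiction. The only subtle point is choosing the test element wisely: the purely $a$-tail $a^m$ ensures that every sufficiently short suffix of $aba^m$ is of the forbidden $a^r$-form, making the coefficient comparison immediate once the pivotal observation is in place.
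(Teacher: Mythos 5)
Your proof is correct and follows essentially the same strategy as the paper: assume a finite generating set, take the test element $(ab-bb)a^m$ with $m$ larger than the relevant length bound, and compare coefficients of a single basis word to reach a contradiction, using the fact that every sufficiently short suffix of that word is a power of $a$. Your ``pivotal observation'' (that no word $a^r$ occurs in the support of any element of the submodule) lets you work with an arbitrary finite set of generators $f_1,\dots,f_n \in I$ rather than first reducing, as the paper implicitly does, to a finite generating set drawn from $(\mathcal{C})\Phi$ itself; you also compare coefficients of $aba^m$ where the paper uses $bba^{n+1}$, but both choices succeed for the same reason.
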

\begin{proof}
Suppose, with the aim of obtaining a contradiction, that $\lb (\mathcal{C})\Phi \rb_{\mathbb{Z}F}$ is finitely generated as a left $\mathbb{Z}F$-module.
Then there exists a finite subset $X$ of $(\mathcal{C})\Phi$ such that  $\lb X \rb_{\mathbb{Z}F}\supseteq (\mathcal{C})\Phi $.
Let $n\in \mathbb{N}$ be the maximal length of a word $v$ where $u(ab-bb)v\in X$.
We shall show that $(ab-bb)a^{n+1}$ belongs to $(\mathcal{C})\Phi$ but not to $\lb X \rb_{\mathbb{Z}F}$.
Suppose that in $\mathbb{Z}F$ we have \[\displaystyle (ab-bb)a^{n+1}=\sum_{i=1}^{k}\alpha_i(ab-bb)v_i,\] where $\alpha_i, v_i\in A^*$ and $|v_i|\leq n$, for $i=1,\ldots, k$.
Then for some $j\in\{1,\ldots, k\}$ we have either
$bba^{n+1} = \alpha_jabv_j$ or
$bba^{n+1} = \alpha_jbbv_j$ in the free monoid $F=\{a,b\}^*$, which
clearly contradicts the fact that $|v_i| \leq n$.
We conclude that $(\mathcal{C})\Phi$
is not finitely generated as a left $\mathbb{Z}F$-module, and it
then follows from \fullref{Lemma}{lem_passtoZG} that $\m{\nonfdt}{\biauto}$ is not \fdt.
\end{proof}

\section{Free products of homogeneous monoids}
\label{sec_freeproducts}

In \fullref{Section}{sec:fundexs}, we gave four examples of
homogeneous monoids that possess certain combinations of the
properties \fcrs, \fdt, \biauto, and \auto. In this section, we use
free products to construct examples with the remaining consistent
combinations of these properties. Note that if monoids $M_1$ and $M_2$
have homogeneous presentations $\pres{A_1}{\rel{R}_1}$ and
$\pres{A_2}{\rel{R}_2}$, then their free product $M_1\ast M_2$ is defined by
the presentation $\pres{A_1 \cup A_2}{\rel{R}_1 \cup \rel{R}_2}$ and
is thus also homogeneous.

First we consider the interaction of the free product with \biauto\ and \auto. It is known that the free product of two
monoids is \auto\ if and only if each of the monoids is \auto\ (\cite[Theorem 6.2]{campbell_autsg}, \cite[Theorem
1.2]{duncan_change}). While it would be possible to extend this result to biautomaticity for general monoids, this
generalization does not appear in the literature. This paper required the biautomaticity result only for homogeneous
monoids, and the proofs are simpler in this case:

\begin{proposition}
\label{prop:homogenousautofreeproduct}
Let $M_1$ and $M_2$ be homogeneous monoids. Then $M_1 \ast M_2$ is
\auto\ (respectively, \biauto) if and only if $M_1$ and $M_2$ are
\auto\ (respectively, \biauto).
\end{proposition}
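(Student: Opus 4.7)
The plan is to handle both implications by exploiting, respectively, the way a free product decomposes back into its factors at the level of words, and the fact that in a homogeneous free product $M_1 \ast M_2$ multiplication by a generator changes word length by exactly one, so that \fullref{Proposition}{prop:rationalbounded} reduces the task of checking regularity of $\rpad$- and $\lpad$-images to showing that the underlying multiplier relations are rational. The \auto\ part is already known in general by the cited results, but the homogeneous hypothesis allows a uniform treatment that also covers \biauto.

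For the forward direction, suppose $M_1 \ast M_2$ is \auto\ (respectively, \biauto). Let $A_i$ denote the unique minimal generating set of $M_i$; then $A_1 \cup A_2$ is the minimal generating set of $M_1 \ast M_2$, and by \fullref{Proposition}{prop:autochangegen} there is an automatic (resp.\ biautomatic) structure $(A_1 \cup A_2, K)$ for $M_1 \ast M_2$. The key observation is that, in the homogeneous setting, any word over $A_1 \cup A_2$ representing an element $m \in M_i$ must lie in $A_i^*$: the alternating normal form in the free product forces any maximal block of letters from the other alphabet to represent the identity of the other factor, and in a homogeneous monoid only the empty word represents the identity. Consequently $K_i := K \cap A_i^*$ is a regular language of representatives for $M_i$, and for each $a \in A_i$ the relation $(K_i)_a$ equals $K_a \cap (A_i^* \times A_i^*)$, so its $\rpad$-image is obtained from $K_a\rpad$ by intersection with a regular language and is therefore regular. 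The analogous argument with ${}_aK$, $K_a\lpad$, and ${}_aK\lpad$ covers the biautomatic case.

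For the converse, suppose $(A_1, L_1)$ and $(A_2, L_2)$ are automatic (resp.\ biautomatic) structures. Put $L_i^\circ = L_i \setminus \{\emptyword\}$ and let
\[
L = (L_1^\circ L_2^\circ)^* (L_1^\circ \cup \{\emptyword\}) \;\cup\; (L_2^\circ L_1^\circ)^* (L_2^\circ \cup \{\emptyword\}),
\]
the regular language of all alternating concatenations of blocks from $L_1^\circ$ and $L_2^\circ$. Every element of $M_1 \ast M_2$ admits a representative in $L$ obtained by choosing an $L_{j_i}^\circ$-representative for each block in its free-product normal form. For $a \in A_1$, right multiplication by $a$ affects only the last block of a word in $L$: if that block is a word over $A_1$ it is replaced by any $v$ with $(u,v) \in (L_1)_a$; if it is over $A_2$ (or the word is empty) the letter $a$ is appended as a new $A_1$-block. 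This expresses $L_a$ as a finite union of concatenations and compositions of known rational relations, hence $L_a$ is rational; the relations ${}_aL$ and those for $a \in A_2$ are handled symmetrically. Since $M_1 \ast M_2$ is homogeneous, $|v| - |u| = 1$ for every pair in these relations, so \fullref{Proposition}{prop:rationalbounded} yields regularity of all the required $\rpad$- and (for \biauto) $\lpad$-images, giving the desired structure $(A_1 \cup A_2, L)$.

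The main technical burden is the bookkeeping in the reverse direction: one must verify that one can recognise, at the last letter of the input, which factor the terminal block belongs to, and that modification of that block by $(L_i)_a$ combines cleanly with the copy of the preceding prefix. This is essentially routine because the modification is localised to one endpoint block, and rational relations are closed under the concatenation and composition operations that implement this localisation; the homogeneity hypothesis then removes any remaining difficulty by converting rationality into regularity of the padded image via \fullref{Proposition}{prop:rationalbounded}.
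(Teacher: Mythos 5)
Your proof is correct and follows essentially the same path as the paper: the forward direction uses \fullref{Proposition}{prop:autochangegen} to get a structure over $A_1\cup A_2$ and then exploits homogeneity to argue that words representing elements of one factor live entirely over that factor's alphabet, so the structure restricts; the reverse direction builds the alternating-block language (your $L$ and the paper's $L_1\bigl((L_2-\{\emptyword\})(L_1-\{\emptyword\})\bigr)^*L_2$ describe the same set) and argues rationality of the multiplier relations together with \fullref{Proposition}{prop:rationalbounded}. The paper delegates the reverse-direction bookkeeping to the cited proof of \cite[Theorem~6.2]{campbell_autsg}, whereas you sketch the case analysis explicitly, but the underlying argument is the same.
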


\begin{proof}
Let $\pres{A_1}{\rel{R}_1}$ and $\pres{A_2}{\rel{R}_2}$ be homogeneous
presentations for $M_1$ and $M_2$, respectively.

Suppose $M_1 \ast M_2$ is \auto; the proof for \biauto\ is similar. By
\fullref{Proposition}{prop:autochangegen}, $M_1 \ast M_2$ admits an
automatic structure $(A_1 \cup A_2,L)$. Since $M_1$ is homogeneous,
there is no non-empty word over $A_1$ representing the identity of
$M_1$; hence every word in $(A_1\cup A_2)^*$ representing an element
of $M_2$ must lie in $A_2^*$. Thus $(A_2,L \cap A_2^*)$ is an
automatic structure for $M_2$; similarly, $(A_1,L\cap A_1^*)$ is an
automatic structure for $M_1$.

On the other hand, suppose $M_1$ and $M_2$ are \auto; again, the proof for \biauto\ is similar. Then there are automatic
structures $(A_1,L_1)$ and $(A_2,L_2)$ for $M_1$ and $M_2$ respectively. By \cite[Corollary~5.5]{campbell_autsg}, assume
without loss of generality that every element of $M_1$ and $M_2$ has a unique representative in $L_1$ and $L_2$,
respectively. Let
\begin{align}
  L ={}& \set{\emptyword} \label{eq:homogenousautofreeproduct1}\\
  & \cup \parens[\Big]{\parens[\big]{L_1 \cap (A_1^+ \times A_1^+)}\parens[\big]{L_2 \cap (A_2^+ \times A_2^+)}}^+ \label{eq:homogenousautofreeproduct2}\\
  & \cup \parens[\Big]{\parens[\big]{L_1 \cap (A_1^+ \times A_1^+)}\parens[\big]{L_2 \cap (A_2^+ \times A_2^+)}}^*\parens[\big]{L_1 \cap (A_1^+ \times A_1^+)} \label{eq:homogenousautofreeproduct3}\\
  & \cup \parens[\Big]{\parens[\big]{L_2 \cap (A_2^+ \times A_2^+)}\parens[\big]{L_1 \cap (A_1^+ \times A_1^+)}}^+ \label{eq:homogenousautofreeproduct4}\\
  & \cup \parens[\Big]{\parens[\big]{L_2 \cap (A_2^+ \times A_2^+)}\parens[\big]{L_1 \cap (A_1^+ \times A_1^+)}}^*\parens[\big]{L_2 \cap (A_2^+ \times A_2^+)};\label{eq:homogenousautofreeproduct5}
\end{align}
note that $L$ is a disjoint union of the languages \eqref{eq:homogenousautofreeproduct1}--\eqref{eq:homogenousautofreeproduct5}.
(Note that the language $L$ is not equal to $(L_1 L_2)^*$: every word in $L$ is a product of words strictly alternating
between non-empty words from $L_1$ and words from $L_2$.)

Since every element of $M_1 \ast M_2$ is represented by a unique element of $L$,
\[
L_\emptyword = \gset{(u,u)}{u \in L}
\]
is a rational relation.

Now let $a \in A_1$, and let $v_a$ be the unique word in $L_1$ that is equal to $a$ (note that $v_a$ will either be $a$
itself or another generator that is equal to $a$). Suppose that $u \in L$ lies in \eqref{eq:homogenousautofreeproduct2}
or \eqref{eq:homogenousautofreeproduct5}. Then $uv_a$ is the unique word in $L$ that is equal to $ua$. On the other
hand, if $u \in L$ lies in \eqref{eq:homogenousautofreeproduct3} or \eqref{eq:homogenousautofreeproduct4}, then it is of
the form $u's$, where $s \in L_2$ and $s$ is the maximal suffix of $u$ lying in $A_1^*$. In this case, the unique word
in $L$ equal to $ua$ is $u't$, where $t$ is the unique word in $L_1$ equal to $sa$: that is, where $(s,t) \in (L_1)_a$. Hence
\begin{align*}
  L_a ={}& \set{(\emptyword,v_a)} \\
         & \cup \parens[\big]{(L_1)_\emptyword \cap (A_1^+\times A_1^+)}\parens[\big]{(L_2)_\emptyword \cap (A_2^+\times A_2^+)}(\emptyword,v_a) \\
         & \cup \parens[\Big]{\parens[\big]{(L_1)_\emptyword \cap (A_1^+\times A_1^+)}\parens[\big]{(L_2)_\emptyword \cap (A_2^+\times A_2^+)}}^*\parens[\big]{(L_1)_a \cap(A_1^+ \times A_1^+)} \\
         & \cup \parens[\big]{(L_2)_\emptyword \cap (A_2^+\times A_2^+)}\parens[\Big]{\parens[\big]{(L_1)_\emptyword \cap (A_1^+\times A_1^+)}\parens[\big]{(L_2)_\emptyword \cap (A_2^+\times A_2^+)}}^*\parens[\big]{(L_1)_a \cap(A_1^+ \times A_1^+)} \\
         & \cup \parens[\Big]{\parens[\big]{(L_2)_\emptyword \cap (A_2^+\times A_2^+)}\parens[\big]{(L_1)_\emptyword \cap (A_1^+\times A_1^+)}}^*\parens[\big]{(L_2)_\emptyword \cap (A_2^+\times A_2^+)}\parens[\big]{(L_1)_a \cap(A_1^+ \times A_1^+)}.
\end{align*}
Thus $L_a$ is a rational relation.

Since $M_1\ast M_2$ is homogeneous, if $(u,v) \in L_x$ for $x \in A$, then $|v| = |ua| = |u| + 1$. Furthermore, if
$(u,v) \in L_\emptyword$, then $|u| = |v|$. Hence $L_x^\$$ and ${}^\$L_x$ are regular for all
$x \in A \cup \set{\emptyword}$ by \fullref{Proposition}{prop:rationalbounded}. Thus $M_1 \ast M_2$ is automatic.

(This reasoning is essentially \cite[Proof of Theorem~6.2]{campbell_autsg}, but simplified because we consider only
homogeneous semigroups.)
\end{proof}

Now we consider the interaction of free product with \fcrs\ within the
class of homogeneous monoids.

\begin{theorem}[{\cite[Theorem~D]{pride_rewriting}}]
\label{thm:fcrsfreefactorifnounits}
Let $M_1$ and $M_2$ be monoids. Suppose that $M_2$ has no non-trivial
left- or right-invertible elements. If $M_1 \ast M_2$ is \fcrs,
then $M_1$ and $M_2$ are \fcrs.
\end{theorem}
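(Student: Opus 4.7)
The plan is to start from a finite complete rewriting system $(A, \rel{R})$ presenting $M_1 \ast M_2$ and to build one for $M_1$ by restricting to the letters that represent elements of $M_1$. After adding new letters together with defining rules (which preserves completeness), one may assume $A = A_1 \sqcup A_2$, where each $A_i$ represents a generating set for $M_i$ inside $M_1 \ast M_2$; after a further preliminary simplification, one may also assume no letter in $A$ represents the identity of $M_1 \ast M_2$.

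The crux of the argument is the following claim: \emph{if $w \in A^*$ represents an element of $M_1$, then its $\rel{R}$-normal form lies in $A_1^*$.} To see this, write the normal form as a maximal alternating concatenation of non-empty blocks from $A_1^+$ and $A_2^+$, and observe that under the natural map to $M_1 \ast M_2$ each block becomes a syllable in the alternating-product expression of the element represented. Since an element of $M_1$ has an alternating-product expression consisting of at most one syllable (an $M_1$-syllable), every $A_2$-block $a_1 a_2 \cdots a_n$ must satisfy $a_1 \cdots a_n =_{M_2} 1$. But then $a_1$ is right-invertible in $M_2$; by hypothesis this forces $a_1 =_{M_2} 1$, contradicting the normalisation. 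Hence no $A_2$-block appears in the normal form.

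Given the claim, the plan is to define
\[
\rel{R}_1 = \bigl\{\, (l, \mathrm{NF}_{\rel{R}}(r)) : (l,r) \in \rel{R},\ l \in A_1^* \,\bigr\},
\]
which the claim makes into a finite rewriting system on $A_1$. Observe that a word in $A_1^*$ is $\rel{R}$-reducible if and only if it is $\rel{R}_1$-reducible, because any contiguous subword of a word in $A_1^*$ that equals the left-hand side of some $\rel{R}$-rule already lies in $A_1^*$. The three completeness ingredients are then straightforward: $\rel{R}_1$ is noetherian because every $\rel{R}_1$-step $xly \imreduces x\,\mathrm{NF}_{\rel{R}}(r)\,y$ simulates a nonempty $\rel{R}$-derivation $xly \imreduces xry \reduces x\,\mathrm{NF}_{\rel{R}}(r)\,y$, so an infinite $\rel{R}_1$-reduction would contradict noetherianity of $\rel{R}$; $\rel{R}_1$ is confluent because the $\rel{R}_1$-irreducible words in $A_1^*$ coincide with the $\rel{R}$-irreducible words in $A_1^*$, giving unique normal forms by completeness of $\rel{R}$; and $\rel{R}_1$ presents $M_1$ because two words in $A_1^*$ are $M_1$-equivalent if and only if they are $M_1 \ast M_2$-equivalent, which happens if and only if they share an $\rel{R}$-normal form, and by the claim this normal form lies in $A_1^*$ and is also their $\rel{R}_1$-normal form.

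The principal obstacle is the key claim on normal forms, since this is the one place where the hypothesis on $M_2$ (no non-trivial one-sided invertible elements) is used; the remainder is a routine check that rewriting in the ambient system projects correctly onto rewriting over the smaller alphabet.
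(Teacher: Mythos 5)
The paper offers no proof of this statement (it is cited as Pride's Theorem~D), so your proposal must stand on its own. Two steps in it are not justified. First, the preliminary reduction to $A = A_1 \sqcup A_2$ is not achieved merely by ``adding new letters together with defining rules'': adjoining a generator $b$ with a rule $b \imreduces w$ does preserve completeness, but it does not remove the \emph{mixed} generators already in $A$ (those whose image in $M_1 \ast M_2$ lies in neither factor, having two or more syllables), and eliminating such generators from a finite complete rewriting system while retaining completeness is a genuinely delicate operation. Given the analysis of your second step below, this alphabet-splitting step is almost certainly where the hypothesis on $M_2$ must actually do its work, yet you simply assert it.

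Second, the passage from ``the represented element lies in $M_1$'' to ``every $A_2$-block $a_1\cdots a_n$ satisfies $a_1\cdots a_n =_{M_2} 1$'' is a non-sequitur. The block decomposition of the normal form is not automatically a reduced syllable sequence: an $A_1$-block could a priori represent $1_{M_1}$ (there is no hypothesis on $M_1$), in which case its deletion merges the adjacent $M_2$-syllables, so that two individually non-trivial $M_2$-syllables may cancel against each other. What closes this gap is completeness itself: $\emptyword$ is the unique $\rel{R}$-irreducible representative of the identity of $M_1 \ast M_2$, and every block is a non-empty irreducible subword of the normal form, so every block --- from $A_1$ or from $A_2$ --- represents a non-identity element. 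But once this is noted the syllable sequence is already reduced, and since the element lies in $M_1$ it has at most one syllable, necessarily from $M_1$; hence there is no $A_2$-block at all, by an argument that never invokes the hypothesis on $M_2$. Your use of the hypothesis in this step is therefore spurious, which reinforces the conclusion that its genuine role must lie in the alphabet-splitting reduction you leave unaddressed.
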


\begin{proposition}
\label{corol:fcrsfreeprodhomog}
Let $M_1$ and $M_2$ be homogeneous monoids. Then $M_1 \ast M_2$ is \fcrs\ if and only if
$M_1$ and $M_2$ are \fcrs.
\end{proposition}

\begin{proof}
Suppose $M_1 \ast M_2$ is \fcrs. Since $M_1$ and $M_2$ are
homogeneous, neither contains any non-trivial left- or
right-invertible elements. So $M_1$ and $M_2$ are both \fcrs\ by
\fullref{Theorem}{thm:fcrsfreefactorifnounits}.

The converse part follows by \cite[Proposition
4.3]{otto_modular_tr}.
% ; then they are presented by
% finite complete rewriting systems $(A_1,\rel{R}_1)$ and
% $(A_2,\rel{R}_2)$, respectively. Assume without loss of generality
% that $A_1$ and $A_2$ are disjoint. \notetoself{2nd referee: 'what is the meaning
% of disjoint in that case?'} Then $M_1 \ast M_2$ admits a
% presentation via the rewriting system $(A_1 \cup A_2,\rel{R}_1 \cup
% \rel{R}_2)$, which is easily seen to be complete.
\end{proof}

Finally, we recall the following result on the
interaction of the free
product and \fdt:

\begin{theorem}[\cite{otto_modular,otto_modular_tr}]
\label{thm:fdtfreeprod}
Let $M_1$ and $M_2$ be monoids. Then $M_1 \ast M_2$ is \fdt\ if and
only if $M_1$ and $M_2$ are \fdt.
\end{theorem}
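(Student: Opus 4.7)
Since the theorem is attributed to Otto, I would reproduce its proof rather than invent one; the approach splits into the two implications, each of which is transparent once one picks the ``disjoint union'' presentation for the free product.

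For the reverse implication, fix finite presentations $\pres{A_1}{\rel{R}_1}$ and $\pres{A_2}{\rel{R}_2}$ for $M_1$ and $M_2$ with $A_1 \cap A_2 = \varnothing$, and finite homotopy bases $B_1, B_2$ for their derivation graphs $\Gamma_1, \Gamma_2$. Then $\pres{A_1 \cup A_2}{\rel{R}_1 \cup \rel{R}_2}$ presents $M_1 \ast M_2$, with derivation graph $\Gamma$. The plan is to show that $B_1 \cup B_2$ (viewed inside $\Gamma$ via the obvious inclusions) is a homotopy base for $\Gamma$. Take a pair of parallel paths $\bbp \parallel \bbq$ in $\Gamma$. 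Every vertex on these paths is a word of the form $w = u_1 u_2 \cdots u_k$ with the $u_i$ alternately from $A_1^*$ and $A_2^*$ (the ``syllable decomposition''). Since each rule in $\rel{R}_j$ uses only letters from $A_j$, every edge on $\bbp$ or $\bbq$ rewrites inside a single syllable; moreover the number and types of syllables are preserved along the path. The pull-down/push-up axiom (condition~1 of a homotopy relation) lets one permute edges that act on different syllables, so modulo $\sim_0$ one can rearrange $\bbp$ and $\bbq$ so that the rewriting happens in one syllable at a time. Each such ``syllable block'' is, after stripping the unchanged prefix and suffix, a path in $\Gamma_1$ or $\Gamma_2$, and two parallel syllable blocks are equivalent modulo $B_1$ or $B_2$ by hypothesis. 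Re-assembling gives $\bbp \sim_{B_1 \cup B_2} \bbq$.

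For the forward implication, suppose $(A, B)$ is a finite homotopy base for a finite presentation of $M_1 \ast M_2$; by invariance of \fdt\ under change of finite presentation \cite[Theorem~4.3]{Squier1}, I may assume $A = A_1 \cup A_2$ and the defining relations are $\rel{R}_1 \cup \rel{R}_2$. I want to extract a finite homotopy base $B_1$ for $\Gamma_1$. The key observation is that every element of $M_1$ is represented uniquely by a word over $A_1$ (there are no non-trivial invertible elements in a homogeneous monoid, but more basically this is a property of the free product), so one can define a retraction $\rho: (A_1\cup A_2)^* \to A_1^*$ that projects a word onto its $A_1$-part. This retraction extends to edges (collapsing any edge that rewrites inside an $A_2$-syllable to an empty path) and hence to paths. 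Applying $\rho$ to the elements of $B$ that become non-trivial after projection yields a candidate finite set $B_1$; routine checking against the four axioms of a homotopy relation, and against a pair of parallel paths in $\Gamma_1$ regarded as paths in $\Gamma$, confirms that $B_1$ is a homotopy base for $\Gamma_1$. The construction of $B_2$ is symmetric.

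The main obstacle is the bookkeeping in the reverse direction: making the syllable-by-syllable rearrangement argument rigorous requires an induction on the number of edges in $\bbp \circ \bbq^{-1}$ that use generators from the ``wrong'' syllable relative to the first rewritten letter, combined with careful invocations of the pull-down/push-up axiom to swap the order of non-interacting edges. Everything else is essentially routine once one has set up the syllable decomposition and the compatible retraction.
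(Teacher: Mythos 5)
The paper does not prove this theorem; it is stated as a recalled result, cited directly from Otto's papers on modular properties of monoids and string-rewriting systems, and is used as a black box. So there is no proof in the paper to compare yours against on its own terms; what I can do is assess your sketch as a standalone argument.

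There is a genuine gap in your reverse implication. You assert that, along a path in $\Gamma = \Gamma(\pres{A_1 \cup A_2}{\rel{R}_1 \cup \rel{R}_2})$, ``the number and types of syllables are preserved.'' This is false for arbitrary monoids $M_1, M_2$. A defining relation in $\rel{R}_j$ may have one side equal to the empty word (for instance if $M_1$ is a group, or more generally whenever $M_j$ has invertible or idempotent-cancelling structure encoded in its presentation). Applying such a rule can empty an $A_j$-syllable, causing the two neighbouring $A_{3-j}$-syllables to merge into one; conversely an insertion rule can split a syllable. Since the theorem is stated for all monoids, not just homogeneous ones, your syllable decomposition is not invariant along paths, so the rearrangement-by-pull-down/push-up and the re-assembly step do not go through as written. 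The same instability undermines the forward direction as you have stated it: it is not true in general that every word over $A_1 \cup A_2$ representing an element of the embedded copy of $M_1$ lies in $A_1^*$ (one can pad with an $A_2$-word representing $1_{M_2}$), so ``this is a property of the free product'' is not a correct justification. The retraction $\rho: (A_1 \cup A_2)^* \to A_1^*$ that deletes $A_2$-letters is nevertheless a well-defined monoid homomorphism commuting with the natural projection $M_1 \ast M_2 \to M_1$, and that is what you actually need, but you should justify it that way and then verify explicitly that $\rho$ is compatible with the homotopy relation axioms, in particular pull-down/push-up when one of the two disjoint edges collapses. In short: the skeleton (syllables, pull-down/push-up, retraction) is the right family of ideas, but the invariance of syllable structure is simply not available in the generality of the theorem, and both implications need a more careful treatment to handle relations with empty sides before the sketch becomes a proof.
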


Now, the examples in \fullref{Section}{sec:fundexs} suffice to construct the remaining examples in
\fullref{Figure}{fig:examplerelationships} by taking their free products. Regarding
\fullref{Figure}{fig:examplerelationships} as a semilattice, examples in \fullref{Section}{sec:fundexs} correspond to
the elements of this semilattice that have no non-trivial decomposition.
\fullref{Proposition}{corol:fcrsfreeprodhomog}, \fullref{Theorem}{thm:fdtfreeprod}, and
\fullref{Proposition}{prop:homogenousautofreeproduct} together show that by taking a free product, one obtains a new
monoid whose properties are given by taking the logical conjunction (that is, the `and' operation) of the corresponding
properties. That is, given two monoids $\m{\aprop}{\bprop}$ and $\m{\cprop}{\dprop}$, their free product
$\m{\aprop}{\bprop} \ast \m{\cprop}{\dprop}$ will have the weaker of the two properties \aprop\ and \bprop\ (which lie in $\set{\text{\fcrs},\text{\fdt},\text{\nonfdt}}$) and the weaker of the two properties \cprop\ and \dprop\ (which lie in $\set{\text{\biauto},\text{\auto},\text{\nonauto}}$). Thus, to summarize:
\begin{itemize}
\item $\m{\fcrs}{\auto}\ast\m{\fdt}{\biauto}$ is non-\fcrs, \fdt, non-\biauto, \auto;
\item $\m{\fdt}{\biauto}\ast\m{\fcrs}{\nonauto}$ is non-\fcrs, \fdt, non-\biauto, non-\auto;
\item $\m{\fcrs}{\auto}\ast\m{\nonfdt}{\biauto}$ is non-\fcrs, non-\fdt, non-\biauto, \auto;
\item $\m{\fcrs}{\nonauto}\ast\m{\nonfdt}{\biauto}$ is non-\fcrs, non-\fdt, non-\biauto, non-\auto.
\end{itemize}

\begin{theorem}
\label{thm:homogeneous}
For each consistent combination of the properties \fcrs, \fdt, \biauto, \auto, and their negations, there exists a
homogeneous monoid with exactly that combination of properties.
\end{theorem}

\section{From homogeneous to $n$-ary multihomogeneous monoids}
\label{sec:extending}

Thus far we have proved that for every consistent combination of the properties \fcrs, \fdt, \biauto, \auto, and their
negations, there exists a homogeneous monoid with exactly those properties. In the remainder of the paper, we show that
such monoids exist in the more restricted class of $n$-ary homogenous monoids, and show that monoids with some
consistent combinations of these properties exist in the even more restricted class of $n$-ary multihomogeneous monoids. The
current section describes the overall strategy and results; the following two sections then develop the necessary
concepts and techniques. First, we introduce and investigate the theory of abstractly Rees-commensuable semigroups in
\fullref{Section}{sec:abstractrees}, ultimately proving \fullref{Corollary}{cor_multi}, which implies that from each
homogeneous monoid listed in \fullref{Table}{tb:resume_properties}, we can obtain an $n$-ary homogeneous monoid with the
same combination of properties. Thus we have the following analogy of \fullref{Theorem}{thm:homogeneous} for $n$-ary homogeneous
monoids:

\begin{theorem}
\label{thm:from_homog_to_n-ary_homog}
For each consistent combination of the properties \fcrs, \fdt, \biauto,
\auto, and their negations, there exists an $n$-ary homogeneous monoid
with exactly that combination of properties.
\end{theorem}

Since we have examples of $n$-ary homogeneous monoids with all consistent combinations of the properties \fcrs, \fdt,
\biauto, \auto, and their negations, the next step is to extend these examples to $n$-ary multihomogeneous
monoids. With that aim, in \fullref{Section}{sec:embedding} we define and investigate an embedding of a ($n$-ary)
homogeneous monoid into a $2$-generated ($n$-ary) multihomogeneous monoid. As stated below in
\fullref{Corollary}{cor:from_homog_to_multihomog}, passing to and from the ($n$-ary) multihomogeneous monoid preserves \fdt,
\auto, and \biauto. Furthermore, passing \emph{to} the ($n$-ary) multihomogeneous monoid preserves \fcrs. It is unknown whether
\fcrs\ is preserved passing back to the original ($n$-ary) homogeneous monoid, or, equivalently, whether non-\fcrs\ is
preserved on passing to the ($n$-ary) multihomogeneous monoid. Applying this embedding technique to the list of $n$-ary
homogeneous monoids we discussed above, we get the following results:

\begin{theorem}
  \label{thm:from_homog_to_multihomog1}
  For each consistent combination of the properties \fcrs, \biauto, \auto, and their negations, there exists an $n$-ary
  multihomogeneous monoid with exactly that combination of properties.
\end{theorem}

\begin{theorem}
  \label{thm:from_homog_to_multihomog2}
  For each consistent combination of the properties \fdt, \biauto, \auto, and their negations, there exists an $n$-ary
  multihomogeneous monoid with exactly that combination of properties.
\end{theorem}

To obtain an analogue of \fullref{Theorem}{thm:from_homog_to_n-ary_homog} for $n$-ary multihomogeneous monoids it would
be sufficient to find a multihomogeneous monoid that is non-\fcrs, \fdt, and \biauto.  Indeed, in that case, combining
\fullref{Theorems}{thm:from_homog_to_multihomog1} and \ref{thm:from_homog_to_multihomog2} with the results of
\fullref{Section}{sec_freeproducts} and noting that a free product of multihomogeneous monoids is again
multihomogeneous, we would get, for any consistent combination of \biauto, \auto, and their negations, an example of an
\fdt, non-\fcrs\ multihomogeneous monoid with exactly that combination of properties. Therefore, from
\fullref{Corollary}{cor_multi} we would get examples of $n$-ary multihomogeneous monoids with exactly the same discussed
properties. Joining these examples with the examples from \fullref{Theorems}{thm:from_homog_to_multihomog1} and
\ref{thm:from_homog_to_multihomog2} we would get the intended result. Thus we have the following question:

\begin{question}\label{question:multi_fdt_non-fcrs}
  Does there exist a multihomogeneous monoid that is \fdt, but non-\fcrs?
\end{question}

\section{Abstractly Rees-commensurable semigroups}
\label{sec:abstractrees}

We introduce a new definition which is inspired by the notions of \defterm{abstractly com\-men\-su\-ra\-ble groups}
\cite[\S\S~iv.27ff.]{delaharpe_geometric} and \defterm{Rees index} for semigroups \cite{Jura1978}. A subsemigroup $T$ of
a given semigroup $S$ has finite Rees index if $S\setminus T$ is finite. In that case the semigroup $S$ is said to be a
\defterm{small extension} of $T$, and $T$ a \defterm{large subsemigroup} of $S$. The main interest is that large
subsemigroups and small extensions of a given semigroup share many important properties of that semigroup (see
\cite{cm_finreessurvey} for a survey).

\begin{definition}
Two semigroups $S_1$ and $S_2$ are said to be
\defterm{abstractly Rees-com\-men\-su\-ra\-ble} if there are finite Rees index
subsemigroups $T_i\subseteq S_i$ (for $i=1,2$) with $T_1 \cong T_2$.
\end{definition}

Is is easy to verify that abstract Rees-com\-men\-su\-ra\-bility is an
equivalence relation on semigroups.

This notion can be naturally extended to ideals.
\begin{definition}
Two semigroups $S_1$ and $S_2$ are said to be \defterm{abstract
Rees-ideal-com\-men\-su\-ra\-ble} if there are finite Rees index ideals
$U_i\subseteq S_i$ (for $i=1,2$) with $U_1 \cong U_2$.
\end{definition}

The idea behind these notions is that abstract
Rees-ideal-com\-men\-su\-ra\-ble semigroups share many important properties,
such as \fcrs, \fdt, \biauto, and \auto.

\begin{proposition}
\fcrs, \fdt, \biauto, and \auto\ are preserved under abstract Rees-ideal-com\-men\-su\-ra\-bility:
\end{proposition}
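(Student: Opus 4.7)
The plan is to first verify that Rees-ideal-commensurability really does define an equivalence relation on semigroups: reflexivity by taking $U_i=S_i$, symmetry is immediate, and for transitivity one notes that if $U \subseteq S$ and $V \subseteq S$ are ideals of finite Rees index, then $U \cap V$ is again an ideal of $S$ of finite Rees index, so two chains of isomorphic finite-Rees-index ideals can be spliced through a common refinement inside the middle semigroup. Once we have this equivalence relation, and because each of the four properties is clearly isomorphism-invariant, the proposition reduces to the following single claim: \emph{if $U$ is an ideal of a semigroup $S$ with $S \setminus U$ finite, then for each $P \in \{\fcrs,\fdt,\biauto,\auto\}$, $S$ has property $P$ if and only if $U$ has property $P$.}

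To prove this reduced claim I would exploit the ideal structure. Write $B = S \setminus U$; because $U$ is an ideal, whenever a product $xy$ in $S$ involves a factor from $U$, the product lies in $U$. From a finite presentation of $S$ one can then construct a finite presentation of $U$ by taking as generators the (finitely many) elements of $B$ together with a finite set of ``short'' elements of $U$ sufficient to express any product $bb'$ landing in $U$ and any element obtained by multiplying an element of $B$ on either side by a generator; conversely, given a finite generating set for $U$, one adjoins a generator for each element of $B$ plus finitely many relations from the multiplication table of $B$ with $B$ and with generators of $U$, to obtain a finite presentation of $S$. In both directions the finiteness of $B$ keeps everything finite, and confluent/noetherian rewriting rules, homotopy bases, and automatic/biautomatic language constructions can be transported across this correspondence.

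Rather than carry out each of these four constructions in full, I would cite the standing results in the Rees index literature that perform exactly these transfers: for \fcrs\ the result of Wang on small extensions, for \fdt\ the analogous theorem of Wang--Malheiro, and for \auto\ and \biauto\ the results of Hoffmann--Ruskuc and Ru\v{s}kuc--Thomas (summarised in the Rees index survey already referenced in the definition). Each of these says, in the stated form, that a monoid/semigroup has the property in question iff some (equivalently, every) large ideal of it has the property, which is precisely what is needed.

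The main obstacle is being precise about the ideal case versus the general large-subsemigroup case. Some of the known results are phrased for arbitrary subsemigroups of finite Rees index and need only be specialised, while others (especially the rewriting-system results) use the ideal structure in an essential way to keep critical pair resolution finite. In our setting we have ideals, which is the easier side of the dichotomy, so once the correct references are pinned down the argument is a matter of chaining: $S_1$ has $P$ $\Rightarrow$ $U_1$ has $P$ $\Rightarrow$ $U_2$ has $P$ (by isomorphism) $\Rightarrow$ $S_2$ has $P$, and symmetrically.
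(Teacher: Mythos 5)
Your approach matches the paper's: reduce the proposition to the claim that a semigroup $S$ has each of the four properties if and only if any finite Rees index ideal $U \subseteq S$ does, then chain $S_1 \Rightarrow U_1 \Rightarrow U_2 \Rightarrow S_2$ through the isomorphism and through the ``small extension / large ideal'' transfer results in the Rees index literature, exactly as the paper does.

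Two points to be careful about. First, your preliminary transitivity sketch has a genuine gap: while $U \cap V$ is indeed an ideal of finite Rees index in the middle semigroup $S_2$, the preimages of $U_2 \cap V_2$ under the isomorphisms are only ideals of $U_1$ and $V_3$, not a priori of $S_1$ and $S_3$ (an ideal of an ideal need not be an ideal of the ambient semigroup). So it is not clear that Rees-\emph{ideal}-commensurability is an equivalence relation, only that Rees-commensurability is, which is in fact what the paper claims. Fortunately, the proposition as stated does not need transitivity at all: ``preserved under Rees-ideal-commensurability'' only requires the two-step chaining you carry out, so this aside, while flawed, is harmless to the rest of your argument. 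Second, you gloss over a real subtlety in the \fdt\ case: the small-extension result for \fdt\ is stated for monoids, not semigroups, and the paper supplies a non-trivial workaround, passing from a semigroup $T$ to the monoid $T^1$ (where \fdt\ transfers trivially, by adding one connected component to the derivation graph), then invoking the monoid small-extension theorem for $S^1 \supseteq T^1$, and finally descending to $S$ using the fact that $S$ is a large \emph{ideal} of $S^1$ and citing Malheiro's result on large ideals. A citation of a ``theorem of Wang--Malheiro'' in the stated form would need to be justified by exactly this bridging argument.
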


\begin{proof}
\begin{itemize}
\item It is known that \fcrs\ is inherited by small extensions
  \cite[Theorem~1]{wang_fcrs} and by large subsemigroups
  \cite[Theorem~1.1]{wong_fcrs}. Although the result for small extensions was
stated
  in the context of monoids, it can be naturally extended to
  semigroups. These two results imply that \fcrs\ is preserved under
  abstract Rees(-ideal)-com\-men\-su\-ra\-bility.

\item It is known that \fdt\ is inherited by small extensions
  \cite[Theorem~2]{wang_fcrs} of monoids, and by large semigroup
  \emph{ideals} \cite[Theorem~1]{malheiro_fdtlargeideals}. We recall that the notion of
  finite derivation type was first introduced for monoids, but it was
  naturally extended to the semigroup case \cite[Section~2]{malheiro_fdtreesmatrix}.

The result on small extensions can be easily adapted for the semigroup
case. Indeed, let $T$ be a semigroup and consider the monoid $T^1$
obtained from $T$ by adding an identity. If $T$ is \fdt\ the derivation
graph of $T^1$ can be obtained from the derivation graph of $T$ by
adding an extra connected component with a single vertex corresponding
to the empty word.  Thus $T^1$ is \fdt. Now, if $S$ is a small
extension of the semigroup $T$, it turns out that the monoid $S^1$ is
a small extension of the monoid $T^1$. Therefore, by \cite[Theorem
2]{wang_fcrs} the monoid $S^1$ is \fdt. But $S$ is a large
ideal of $S^1$, and by \cite[Theorem 1]{malheiro_fdtlargeideals} we conclude
that $S$ is \fdt.

The two results on small extensions and large ideals of semigroups
show that \fdt\ is preserved under abstract
Rees-ideal-com\-men\-su\-ra\-bility.

\item By \cite[Theorem 1.1]{HTR02} and its natural analogue for \biauto,
  both \auto\ and \biauto\ are inherited by small extensions and by large
  subsemigroups, and therefore \auto\ and \biauto\ are preserved under
  abstract Rees(-ideal)-com\-men\-su\-ra\-bility.
\end{itemize}
\end{proof}

% [\fcrs, \biauto, and \auto\ are actually preserved under abstract
%   Rees-com\-men\-su\-ra\-bility, but for this paper, only the result for
%   abstract Rees-ideal-com\-men\-su\-ra\-bility is needed.]

The preceding result is important because, in the case of
(multi)homogeneous and $n$-ary (multi)homogeneous monoids, the
following result holds:

\begin{proposition}
Every finitely presented (multi)homogeneous monoid is
Rees-ideal-commen\-su\-ra\-ble to an $n$-ary (multi)homogeneous
monoid, where $n$ can be chosen arbitrarily as long as it is greater
than or equal to the length of the longest relation in $\rel{R}$.
\end{proposition}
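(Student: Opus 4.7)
The plan is to construct, from the given finite (multi)homogeneous presentation $\pres{A}{\rel{R}}$ of $M$, a new presentation on the same alphabet in which every relation has length exactly $n$, by ``padding'' each relation with arbitrary $A$-contexts. Concretely, set
\[
\rel{R}' = \bigl\{(xuy,\,xvy) : (u,v)\in\rel{R},\ x,y\in A^*,\ |x|+|u|+|y|=n\bigr\}.
\]
Since $A$ and $\rel{R}$ are finite and $n\geq|u|$ for every $(u,v)\in\rel{R}$, the set $\rel{R}'$ is finite and consists entirely of length-$n$ relations. Padding preserves the content of each side, so $\rel{R}'$ is automatically homogeneous, and multihomogeneous whenever $\rel{R}$ is. Let $M' = \pres{A}{\rel{R}'}$; this will be the desired $n$-ary (multi)homogeneous companion of $M$.

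To obtain Rees-ideal-commensurability, I will compare the ``length-$\geq n$'' parts of the two monoids. Both are homogeneous, so each carries a well-defined length function $\lambda,\lambda'$, and the sets
\[
I = \{m \in M : \lambda(m) \geq n\},\qquad I' = \{m' \in M' : \lambda'(m') \geq n\}
\]
are two-sided ideals (lengths add under multiplication) whose complements are finite, being contained in the image of the finite set of $A$-words of length less than $n$. Every relation of $\rel{R}'$ is a consequence of $\rel{R}$, so the identity on $A$ induces a surjective homomorphism $\pi : M' \to M$ that restricts to a surjection $\pi|_{I'} : I' \to I$. The plan is to show that this restriction is in fact an isomorphism, which then gives the required commensurability.

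The main point is injectivity of $\pi|_{I'}$: given $w_1,w_2 \in A^*$ with $|w_i|\geq n$ and $w_1=_M w_2$, I need to conclude $w_1=_{M'} w_2$. I intend to fix a finite sequence of single $\rel{R}$-rewrites $w_1=z_0\to z_1\to\cdots\to z_k=w_2$ and realise each of its steps as a single $\rel{R}'$-rewrite. Homogeneity of $M$ ensures that every $z_i$ has length $\geq n$, so if $z_i=p u q \to p v q = z_{i+1}$ with $(u,v)\in\rel{R}$, then $|p|+|q|\geq n-|u|$; this is precisely the inequality needed to choose a suffix $x$ of $p$ and a prefix $y$ of $q$ with $|xuy|=n$, after which the padded relation $(xuy,xvy)\in\rel{R}'$ rewrites $z_i$ to $z_{i+1}$. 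This padding step, which crucially uses the hypothesis $n\geq\max_{(u,v)\in\rel{R}}|u|$, is the only non-trivial ingredient; once it is in place, the isomorphism $I\cong I'$, and hence the Rees-ideal-commensurability of $M$ with the $n$-ary (multi)homogeneous monoid $M'$, follows at once.
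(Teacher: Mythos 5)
Your construction of $\rel{R}'$ by padding each relation of $\rel{R}$ to length $n$ with arbitrary left and right $A$-contexts is exactly the one used in the paper, and your ideals $I,I'$ of elements of length at least $n$ are likewise the same. The paper's proof states the injectivity of the length-$\geq n$ part of the natural map rather tersely (``we get $[u]_{\rel{R}}=[v]_{\rel{R}}$, with $|u|=|v|\geq n$, and therefore $[u]_{\rel{R}'}=[v]_{\rel{R}'}$'') while you spell out the key point: because $M$ is homogeneous, every word in a derivation chain connecting two words of length $\geq n$ itself has length $\geq n$, so the $\rel{R}$-contexts around each applied relation are long enough to supply the padding to length $n$, turning each $\rel{R}$-step into an $\rel{R}'$-step. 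This is the argument the paper leaves implicit, and your write-up is correct and complete.
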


\begin{proof}
  Let $\pres{A}{\rel{R}}$ be a finite homogeneous presentation of a monoid $M$.  Let $n$ be chosen arbitrarily, provided
  it is greater than or equal to the maximum length of a relation in $\rel{R}$. We are going to construct an $n$-ary
  (multi)homogeneous monoid $M'$ and show it is abstract Rees-ideal-commensurable to $M$ by finding isomorphic ideals
  $I$ and $I'$ of $M$ and $M'$ respectively.

Let $I=\{[u]_M\in M : |u|\geq n\}$. Note that
$I$ is an ideal of finite Rees index in $M$, since its complement in $M$ is the
finite set  $\bigcup_{0\leq i< n}\{[a_1]_M\cdots [a_i]_M:a_1,\ldots,
a_i\in A\}$.

Let $\rel{R}'=\{(u\ell v,urv): (\ell ,r)\in \rel{R}, u,v\in A^*, |u\ell v|=n\}$.
Consider the $n$-ary
(multi)homogeneous presentation $\pres{A}{\rel{R}'}$, and let $M'$ be the
monoid defined by it.

The set $I'=\{[u]_{M'}\in M': |u|\geq n\}$ is an ideal of $M'$. Moreover, $I'$
has finite Rees index in $M'$.

Since $\rel{R}'$ is contained in the Thue congruence generated by
$\rel{R}$, we can define a map $\varphi: I' \rightarrow I$, with
$\left([u]_{M'}\right)\varphi =[u]_{M}$. Given $[u]_M$ in $I$, thus with
$u\in A^*$ and $|u|\geq n$, we have $[u]_{M'}\in I'$ by definition of $I'$.
Therefore $\varphi$ is surjective. It is also injective since,
for any
$[u]_{M'}$, $[v]_{M'}\in I'$ such that
$\left([u]_{M'}\right)\varphi
=\left([v]_{M'}\right)\varphi$, we get
$[u]_{M}=[v]_{M}$, with $|u|=|v|\geq n$, and therefore
$[u]_{M'}=[v]_{M'}$. It is routine to check that $\varphi$
is a homomorphism. Thus $I$ and $I'$ are isomorphic finite Rees index
ideals of $M$ and $M'$. So $M$ and $M'$ are abstract
Rees-ideal-com\-men\-su\-ra\-ble.
\end{proof}

\begin{corollary}\label{cor_multi}
Let $P$ be a set of properties preserved under abstract
Rees-ideal-com\-men\-su\-ra\-bility. Then there exists a (multi)homogeneous
monoid satisfying every property in $P$ if and only if there exists
an $n$-ary (multi)ho\-mo\-ge\-neous monoid satisfying every property
in $P$.
\end{corollary}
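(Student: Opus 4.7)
The corollary is essentially a direct consequence of the preceding proposition, combined with the fact that Rees-ideal-commensurability is an equivalence relation on semigroups.

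The plan is as follows. The forward implication (``$\Leftarrow$'') is immediate: any $n$-ary (multi)homogeneous monoid is in particular (multi)homogeneous, so if an $n$-ary (multi)homogeneous monoid satisfies every property in $P$, it serves as the required (multi)homogeneous witness. For the reverse implication (``$\Rightarrow$''), suppose $M$ is a (multi)homogeneous monoid satisfying every property in $P$. Since each property in $P$ is, by assumption, preserved under abstract Rees-ideal-commensurability, and this is an equivalence relation, we know $M$ must in particular be finitely presented (otherwise the properties under consideration would typically not apply, and more importantly the previous proposition only constructs an $n$-ary commensurable partner for finitely presented monoids). All the properties of primary interest in this paper (\fcrs, \fdt, \biauto, \auto) implicitly require finite presentability, so this causes no issue for our application.

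Next I would invoke the preceding proposition to obtain an $n$-ary (multi)homogeneous monoid $M'$ which is Rees-ideal-commensurable to $M$, where $n$ can be chosen to be at least the length of the longest defining relation in a finite (multi)homogeneous presentation of $M$. Then, since each property in $P$ is preserved under abstract Rees-ideal-commensurability and $M$ satisfies every property in $P$, it follows that $M'$ also satisfies every property in $P$. This provides the required $n$-ary (multi)homogeneous witness.

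There is no real obstacle here, since all the work has been done in the previous proposition and in the preservation result. The only subtlety worth noting is that the statement is phrased for arbitrary lists $P$ of properties preserved under Rees-ideal-commensurability; in our intended application to \fullref{Theorem}{thm_main}, each of \fcrs, \fdt, \biauto, and \auto is preserved (by the preceding proposition of the paper), and since Rees-ideal-commensurability is symmetric, the negations of these properties are equally preserved. Hence any combination of these four properties and their negations forms a valid list $P$, which is precisely what is needed to upgrade the homogeneous examples of \fullref{Section}{sec:fundexs} and \fullref{Section}{sec_freeproducts} to $n$-ary multihomogeneous examples and thereby establish \fullref{Theorem}{thm_main}.
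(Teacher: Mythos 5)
Your proposal is correct and follows precisely the route the paper intends: the backward implication is immediate since $n$-ary (multi)homogeneity is a stronger condition, and the forward implication combines the preceding proposition (constructing an $n$-ary commensurable partner) with the preservation hypothesis on $P$. The paper gives no explicit proof for this corollary, so there is no competing argument to contrast with. One small wrinkle: the assertion that ``$M$ must in particular be finitely presented'' does not follow logically from the hypothesis that $P$ is preserved under Rees-ideal-commensurability; rather, finite presentability is an implicit standing assumption in the paper (see the abstract and the introduction, which restrict attention to finitely presented monoids), and the preceding proposition requires it. You do acknowledge this and correctly observe that it is harmless for the application to \fullref{Theorem}{thm_main}, so the argument stands.
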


Note that the set of properties $P$ can contain `negative' properties like `not
finitely generated'. As an immediate consequence, we obtain
\fullref{Theorem}{thm:from_homog_to_n-ary_homog}

\section{From $n$-ary homogeneous to $n$-ary multihomogeneous monoids}
\label{sec:embedding}

We now develop the embedding technique that allows us to construct multihomogeneous examples. The technique embeds a
homogeneous monoid into a multihomogenous monoid that shares several of the properties under study. If the original
monoid is $n$-ary homogeneous, then the monoid it is embedded into is $n$-ary multihomogenous, so we use the
formulations `($n$-ary) homogeneous' and `($n$-ary) multihomogeneous' to indicate that statements apply to both the
general and $n$-ary cases.

Let us start by fixing some notation that will be maintained throughout this section. Let $M$ be a finitely generated
($n$-ary) homogeneous monoid, and let $\pres{A}{\rel{R}}$ be a ($n$-ary) homogeneous presentation defining $M$.  Recall
that $A = \{a_1,\ldots,a_m\}$ is minimal, so that any other generating set of $M$ contains $A$ (see discussion at the
end of \fullref{Subsection}{subsection_words}).

We define a homomorphism
\[
\phi : A^* \to \{x,y\}^*, \qquad a_i \mapsto x^{2}y^ixy^{m+1-i}.
\]
Denote by $N$ the finitely generated monoid presented by
$\pres{x,y}{\rel{R}\phi}$, where $\rel{R}\phi$ denotes the set
$\{(\ell\phi,r\phi ): (\ell,r)\in \rel{R}\}$.

\begin{proposition}
The monoid $N$, defined by the presentation $\pres{x,y}{\rel{R}\phi}$, is
($n(m+4)$-ary)
multihomogeneous.
%If $\pres{A}{\rel{R}}$ is homogeneous, then $\pres{x,y}{\rel{R}\phi}$ is
%multihomogeneous.
\end{proposition}

\begin{proof}
The presentation $\pres{A}{\rel{R}}$ is homogeneous hence, for any $(u,v) \in
\rel{R}$, we have $|u| = |v|$. Since $a_i\phi$ contains $3$ symbols $x$
and $m+1$ symbols $y$ for all $i \in \{1,\ldots,m\}$, it follows that
$|u\phi|_x = 3|u| = 3|v| = |v\phi|_x$ and $|u\phi|_y = (m+1)|u| =
(m+1)|v| = |v\phi|_y$. Hence $\pres{x,y}{\rel{R}\phi}$ is
multihomogeneous.

If the presentation $\pres{A}{\rel{R}}$ is $n$-ary homogeneous, then also for
any relation $(u,v) \in
\rel{R}$, we have $|u| = |v|=n$. But in that case we get $|u\phi|=|u\phi|_x
+|u\phi|_y = 3|u|+ (m+1)|u|= 3n+(m+1)n= n(m+4)$, and similarly for $|v\phi|$.
\end{proof}

A subset $C$ of $\{x,y\}^*$ is a \defterm{code} if it is a set of free
generators for the submonoid of  $\{x,y\}^*$ generated by $C$
\cite[Subsection~7.2]{howie_fundamentals}. That is the case for the set
$A\phi=\{x^{2}y^ixy^{m+1-i}: i=1,\ldots,m\}$. Furthermore, $A\phi$ is what is
called a \defterm{prefix code}, since no word of $A\phi$ is a proper prefix of
another word in $A\phi$ \cite[Chapter~6]{lothaire_algebraic}.

By \cite[Proposition~6.1.3]{lothaire_algebraic}, since $\phi$ induces a
bijection from $A$ to the code $A\phi$, we conclude that $\phi$ is injective,
and for that reason $\phi$ is called a \defterm{coding morphism for} $A\phi$.

\begin{proposition}
\label{prop:homogenousembedding}
The monoid $M$ presented by $\pres{A}{\rel{R}}$ embeds into the monoid $N$
presented by
$\pres{x,y}{\rel{R}\phi}$ via the map $[u]_M \mapsto [u\phi]_N$, and the words
over $\{x,y\}$ representing elements of (the image of) $M$ are
precisely the words in $(A^*)\phi$.
% The monoid $\pres{A}{\rel{R}}$ embeds into the monoid
% $\pres{x,y}{\rel{R}\phi}$ via the map $u \mapsto u\phi$, and the words
% over $\{x,y\}$ representing elements of (the image of) $\pres{A}{\rel{R}}$ are
% precisely the words in $A^*\phi$.
\end{proposition}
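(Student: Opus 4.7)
The plan is to prove three things in sequence: (a) that $\phi$ descends to a well-defined monoid homomorphism $\bar\phi : M \to N$, where $M = \pres{A}{\rel{R}}$ and $N = \pres{x,y}{\rel{R}\phi}$; (b) that $\bar\phi$ is injective; and (c) that the words of $\{x,y\}^*$ representing elements of $\bar\phi(M)$ are exactly $A^*\phi$. Part~(a) is immediate, since every defining relation $(u,v)\in\rel{R}$ is mapped to a defining relation $(u\phi,v\phi)\in\rel{R}\phi$, so $u \thue_{\rel{R}} v$ forces $u\phi \thue_{\rel{R}\phi} v\phi$.

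The crux is a single lifting lemma: \emph{if $w \in A^*\phi$ and $w \equivrl_{\rel{R}\phi} w'$ in one step, then $w' \in A^*\phi$, and writing $w = u\phi$ and $w' = v\phi$ we have $u \equivrl_{\rel{R}} v$.} To prove it, I would exploit the following special features of $\phi$: (i) the code $A\phi$ is uniform, with every $a_i\phi$ having length $n+4$; and (ii) the factor $xx$ occurs in a word of $A^*\phi$ only at code-word boundaries, because each $a_i\phi = x^2y^ixy^{n+1-i}$ starts with $xx$, has no other internal $xx$ (since $i\ge 1$ forces at least one $y$ between the two $x$-runs), and cannot produce a straddling $xx$ across consecutive code words (since $n+1-i\ge 1$ means every $a_i\phi$ ends with at least one $y$). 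If the step applies a rule $(\ell\phi,r\phi)$ with $(\ell,r)\in\rel{R}$, say $w=\alpha(\ell\phi)\beta$ and $w'=\alpha(r\phi)\beta$, then $\ell\phi$ begins with $xx$, so by (ii) it starts at a code boundary in $w$; since $|\ell\phi|$ is a multiple of $n+4$ by (i), it also ends at a code boundary. Hence $\alpha,\beta\in A^*\phi$, say $\alpha = \alpha'\phi$ and $\beta = \beta'\phi$, and $w = (\alpha'\ell\beta')\phi$, $w' = (\alpha'r\beta')\phi$, with $\alpha'\ell\beta' \imreduces_{\rel{R}} \alpha'r\beta'$. Because $\phi$ is injective (being a code), the words $u$ and $v$ are unique, so the lemma is proved.

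From this lemma, injectivity of $\bar\phi$ follows by induction along any $\rel{R}\phi$-chain: if $u\phi = w_0 \equivrl w_1 \equivrl \cdots \equivrl w_k = v\phi$, then each $w_i$ lies in $A^*\phi$, and the chain projects to an $\rel{R}$-chain $u = u_0 \equivrl u_1 \equivrl \cdots \equivrl u_k = v$, so $u \thue_{\rel{R}} v$. The image characterisation is the same argument in reverse: any $w$ with $w \thue_{\rel{R}\phi} u\phi$ for some $u\in A^*$ must lie in $A^*\phi$, since by the lemma the inductive property ``belonging to $A^*\phi$'' is preserved in both directions along a chain of one-step applications and holds at the endpoint $u\phi$; conversely every word in $A^*\phi$ trivially represents an element of $\bar\phi(M)$.

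The main obstacle is the alignment claim (ii) above, that any match of $\ell\phi$ inside a word of $A^*\phi$ must respect the code factorisation. All of the routine congruence bookkeeping is forced once this is in hand; the rest of the proof is a straightforward induction on chain length.
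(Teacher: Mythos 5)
Your proposal follows essentially the same route as the paper: both exploit the facts that $xx$ can occur in a word of $A^*\phi$ only at the start of a code word and that all code words have the same length, to conclude that any occurrence of $\ell\phi$ (for $(\ell,r)\in\rel{R}$) inside a word of $A^*\phi$ is aligned to the code factorisation, and both then induct along a chain of elementary transitions to obtain injectivity and the image characterisation simultaneously. Your justification of the alignment claim is somewhat more detailed than the paper's, which simply asserts that subwords $x^2$ only occur at the start of words in $A\phi$, but the argument is the same.
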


\begin{proof}
 Consider the natural projection $\rho_N$ of $\{x,y\}^*$ onto $N$, whose kernel
is generated by $\rel{R}\phi$. Since $\phi$ is injective, the kernel of the
composition $\phi\circ\rho_N:A^*\rightarrow N$ is the congruence generated by
$\rel{R}$.

 The natural projection of $\rho_M$ of $A^*$ onto $M$ has the same kernel has
the map $\phi\circ\rho_N$. By \cite[Theorem 1.5.2]{howie_fundamentals} there
is a monomorphism $\overline{\phi}:M\rightarrow N$ such that
$(M)\overline{\phi}=(A^*)\phi\circ\rho_N$ and  $([u]_M)\overline{\phi}
=(u)\phi\circ\rho_N= [u\phi]_N$.
\end{proof}

We shall now turn our attention to investigating the relationship between
\fdt\ holding in $M$ and \fdt\ holding in $N$.
For this we shall prove some results which relate the Squier graphs of $M$
and $N$.

Let $\Delta$ denote the subgraph of $\Gamma(\pres{x,y}{R\phi})$
induced on the set of vertices $(A\phi)^*$.

Let us extend the mapping $\phi:A^*\rightarrow\{x,y\}^*$ to a mapping from
the
derivation graph $\Gamma(\pres{A}{\rel{R}})$ to the derivation graph
$\Gamma(\pres{x,y}{\rel{R}\phi})$ mapping an edge
$\bbe=(w_1,(\ell,r),\epsilon, w_2)$ to an edge
$\bbe\phi=(w_1\phi,(\ell\phi,r\phi),\epsilon,
w_2\phi)$.
 The mapping $\phi$ extends to a mapping between paths by putting
\[
(\bbe_1 \bbe_2 \ldots \bbe_k)\phi =
(\bbe_1 \phi)(\bbe_2 \phi) \ldots (\bbe_k \phi),
\]
for any edges $\bbe_1, \bbe_2, \ldots,\bbe_k$ of $\Gamma(\pres{A}{\rel{R}})$.

\begin{lemma} \label{lemma:multihomogeneous_bijection_derivation_graph}
The mapping $\phi: \Gamma(\pres{A}{\rel{R}}) \rightarrow
\Gamma(\pres{x,y}{\rel{R}\phi})$ has the following properties.
\begin{enumerate}%[(i)]
\item $\phi$ maps $A^*$ bijectively to $(A\phi)^*$;
\item For every edge $\bbe$ in $\Gamma(\pres{A}{\rel{R}})$ we have
\[
\iota (\bbe \phi) = (\iota \bbe) \phi, \quad \mbox{and} \quad
\tau (\bbe \phi) = (\tau \bbe) \phi;
\]
\item For every  edge $\bbe$ in $\Gamma(\pres{A}{\rel{R}})$ and every pair of
words $u,v \in A^*$ we have
\[
(u \cdot \bbe \cdot v)\phi = u \phi \cdot \bbe \phi \cdot v \phi;
\]
\item $\phi$ maps the edge-set of $\Gamma(\pres{A}{\rel{R}})$ bijectively to
the
edge-set of $\Delta$.
\end{enumerate}
\end{lemma}
\begin{proof}
\noindent (i) It follows from the fact that $\phi$ is an injective homomorphism
from $A^*$ into $\{x,y\}^*$, and thus $\phi$ maps bijectively $A^*$ to its
image $(A^*)\phi=(A\phi)^*$.

\noindent (ii) \& (iii) The identities follow from the definition of the
extended map $\phi $ on edges and from the fact that $\phi$ is a homomorphism.

\noindent (iv) The injectivity of the edge-set of $\Gamma(\pres{A}{\rel{R}})$
 to the edge-set of $\Delta$, follows from the injectivity of $\phi$.
Now, consider an edge $\bbf=(z_1 , (\ell\phi,r\phi), \epsilon,z_2)$  of
$\Delta$, and suppose without lost of generality, that $\epsilon =+1$.

The word  $\iota\bbf=z_1(\ell\phi)z_2$ decomposes as a concatenation of words
in $A\phi$; that is,
 words of the form $x^2y^jxy^{m+1-j}$ for various $j \in
 \{1,\ldots,m\}$. Since subwords $x^2$ only occur at the start of such
 words in $A\phi$, the $x^2$ at the start of $\ell\phi$ lies at the start
 of a word from $A\phi$ in the decomposition of $\iota\bbf$. Since all
 words in $A\phi$ have the same length, $\ell\phi$ must also finish at the
 end of some word from $A\phi$ in the decomposition of $\iota\bbf$. Hence
 $z_1$ and $z_2$ are (possibly empty) concatenations of words in $A\phi$,
 and so $z_1 = w_1\phi$ and $z_2 = w_2\phi$ for some $w_1,w_2 \in A^*$. Thus
$\bbf =\bbe\phi$, for the edge $\bbe =(w_1,(\ell,r),+1,w_2)$ of
$\Gamma(\pres{A}{\rel{R}})$.
\end{proof}

Let $\psi: (A\phi)^* \rightarrow A^*$ be the right inverse of the injective
mapping $\phi$. Note that $(w_1w_2)\psi=(w_1\psi)(w_2\psi)$, for any elements
$w_1,w_2\in (A\phi)^*$.
By part (iv) of the above lemma we can extend $\psi$ to a mapping of edges by
simply setting $(\bbe \phi) \psi = \bbe$ and
using the fact that every edge of $\Delta$ has the form $\bbe\phi$ for a unique
edge $\bbe$ from $\Gamma(\pres{A}{R})$.

\begin{lemma}
\label{lemma:multihomogeneous_bijection_psi_derivation_graph}
The mapping $\psi: \Delta \rightarrow \Gamma(\pres{A}{R})$ has the
following properties.
\begin{enumerate}%[(i)]
\item $\psi$ maps $(A\phi)^*$ bijectively to $A^*$;
\item For every edge $\bbe$ in $\Delta$  we have
\[
\iota (\bbe \psi) = (\iota \bbe) \psi, \quad \mbox{and} \quad
\tau (\bbe \psi) = (\tau \bbe) \psi;
\]
\item For every edge $\bbe$ in $\Delta$ and every pair of words $u,v \in
\{x,y\}^*$ we have that $u \cdot \bbe \cdot v$ is an edge in $\Delta$ if and
only if $u,v \in (A\phi)^*$ in which case
\[
(u \cdot \bbe \cdot v)\psi = u \psi \cdot \bbe \psi \cdot v \psi.
\]
\item $\psi$ maps the edge-set of $\Delta$ bijectively to the edge-set of
$\Gamma(\pres{A}{R})$.
\end{enumerate}
\end{lemma}
\begin{proof}
\noindent (i) Follows immediatly from the definition of $\psi$.

\noindent (ii) From
\fullref{Lemma}{lemma:multihomogeneous_bijection_derivation_graph} each edge of
$\Delta$ has the form $\bbe\phi=(w_1\phi, (\ell\phi,r\phi),\epsilon, w_2\phi)$,
with $w_1,w_2\in A^*$, $(\ell,r)\in \rel{R}$ and $\epsilon=\pm 1$. The image of
the edge under $\psi$  is $\bbe$.  Since $\phi$ is an homomorphism and from the
definition of $\psi$ we get $((w_1\phi)(\ell\phi)(w_2\phi))\psi= w_1\ell w_2$
 and $((w_1\phi)(r\phi)(w_2\phi))\psi= w_1r w_2$ as required.

\noindent (iii)  Suppose that  $\bbe$ is an edge in $\Delta$ and that for some
$u,v \in \{x,y\}^*$ we have $u \cdot \bbe \cdot v$ an edge in $\Delta$. So the
vertex $u(\iota \bbe)v$ of $\bbe$ is a word in $(A\phi)^*$. Arguing as in
\fullref{Lemma}{lemma:multihomogeneous_bijection_derivation_graph}~(iv) we can
conclude that $u,v\in (A\phi)^*$.

The converse part of the equivalence follows trivially.

If $\bbe$ in $\Delta$ and $u,v\in (A\phi)^*$, there exists $\bbe'$ in
$\Gamma(\pres{A}{R})$ and $u',v'\in A^*$, such that
$
(u \cdot \bbe \cdot v)= u' \phi \cdot \bbe \phi \cdot v' \phi$. From the
definition of $\psi$  we get $
(u \cdot \bbe \cdot v)\psi = u \psi \cdot \bbe \psi \cdot v \psi$.

\noindent (iv) It follows from the definition of $\psi$ and
\fullref{Lemma}{lemma:multihomogeneous_bijection_derivation_graph}.
\end{proof}

From the above lemmas, we can regard, up to the given encoding, the graph
$\Gamma(\pres{A}{\rel{R}})$ as a subgraph of $\Gamma(\pres{x,y}{\rel{R}\phi})$,
identifying  $\Gamma(\pres{A}{\rel{R}})$ with
$\Delta$.

\begin{lemma}\label{lemma:preserve_homotopy_multihomogeneous}
Let $\mathcal{C}$ be a set of closed paths in $\Gamma(\pres{A}{R})$. Then
$\mathcal{C}\phi$ is a set of closed paths in $\Delta$.
Moreover, for every closed path $\bbp$ in $\Gamma(\pres{A}{R})$, if
$\bbp \sim_\mathcal{C} 1_{\iota\bbp}$ in $\Gamma(\pres{A}{R})$
then $\bbp\phi \sim_{\mathcal{C}\phi} 1_{(\iota \bbp) \phi}$ in
$\Gamma(\pres{x,y}{R\phi})$.
\end{lemma}
\begin{proof}
The result follows by \cite[Theorem~3.6]{squier_finiteness}.
\end{proof}

% ** Note: This proof would also contain the special case where $\mathcal{C}$ is
% the empty set, which corresponds to $\sim_0$.
At this point,  our strategy starts to emerge. The identification  between
$\Delta$ with the derivation
graph  $\Gamma(\pres{A}{\rel{R}})$, will allow us to get a finite homotopy base
of $\Gamma(\pres{A}{\rel{R}})$, from a finite homotopy bases of
$\Gamma(\pres{x,y}{\rel{R}\phi})$.

\begin{lemma}\label{lemma:preserve_homotopy_reverse_multihomogeneous}
Let $\mathcal{D}$ be a set of closed paths in $\Delta$. Then $\mathcal{D}\psi$
is a set of closed paths in $\Gamma(\pres{A}{R})$.
Moreover, for every closed path $\bbp$ in $\Delta$, if
$\bbp \sim_\mathcal{D} 1_{\iota\bbp}$ in $\Gamma(\pres{x,y}{R\phi})$
then $\bbp\psi \sim_{\mathcal{D}\psi} 1_{(\iota \bbp) \psi}$ in
$\Gamma(\pres{A}{R})$.
\end{lemma}
\begin{proof}
The result follows from \cite[Lemma 1]{Lafont1995}, where the author uses
strict monoidal categories to prove an analogue of
\cite[Theorem~3.6]{squier_finiteness}.

Indeed, in that context, both $\Gamma(\pres{A}{R})$ and $\Delta$ are
strict monoidal categories (notice that $\Delta$ is not a derivation graph,
and so we can not refer to \cite[Theorem~3.6]{squier_finiteness} to prove the
lemma).  The mapping $\psi$ is a functor that preserves the multiplicative
structure, by
\fullref{Lemma}{lemma:multihomogeneous_bijection_psi_derivation_graph}.
The homotopy relation $\sim_{\mathcal{D}\psi}$ is what is called a
$2$-congruence on $\Delta$, satisfying $\bbp\psi \sim_{\mathcal{D}\psi}
1_{(\iota \bbp) \psi}$, for any closed path $\bbp$ in $\mathcal{D}$. From
\cite[Lemma 1]{Lafont1995}, it follows that $\bbp\psi \sim_{\mathcal{D}\psi}
1_{(\iota \bbp) \psi}$, for any closed path $\bbp$ of $\Delta$ such that $\bbp
\sim_\mathcal{D} 1_{\iota\bbp}$.
\end{proof}

Next, we describe the remaining connected
componnets of $\Gamma(\pres{x,y}{\rel{R}\phi})$.

\begin{lemma} \label{lemma:properties_derivation_graph_xy}
The derivation graph $\Gamma(\pres{x,y}{\rel{R}\phi})$ has the
following properties:
 \begin{enumerate}%[(i)]
 \item If a vertex $z$ has maximal factors (with respect to length)  $u$ and
$v$ in $(A\phi)^*$, then either $z=z_0uz_2=z_0vz_2$, or $z=z_0uz_1vz_2$, or
$z=z_0vz_1uz_2$, with $z_0,z_1,z_2\notin (A\phi)^+$ and $z_1$ non-empty;
  \item Any vertex has a
unique decomposition of the form \[w_0
u_1 w_1 \cdots w_{k-1} u_k w_k,\] where each $u_i$ is a maximal factor
in $(A\phi)^*$, and each $w_i$ has no factor in $(A\phi)^*$;
\item Any edge  has the form
$z_1\cdot(\bbe\phi)\cdot z_2$, for some edge $\bbe$ in
$\Gamma(\pres{A}{\rel{R}})$, where $\iota\bbe\phi$ and $\tau\bbe\phi$ are
maximal factors in $(A\phi)^*$ of the corresponding initial and terminal
vertices of $\bbe\phi$, and $z_1,z_2$ are not in $(A\phi)^+$.
 \end{enumerate}
\end{lemma}
\begin{proof}
\noindent (i) Let $u$ and $v$ be maximal factors in $(A\phi)^*$ of $z$. Suppose
$u$ and $v$ overlap in the word $z$. Without lost of generality, suppose that
$z=u_0uu_1=v_0vv_1$ with $|u_0|\leq |v_0|< |u_0u|$. Since $v\in (A\phi)^*$
is a sequence of words of the form $x^2y^jxy^{m+1-j}$ for various $j \in
 \{1,\ldots,m\}$, and subwords $x^2$ only occur at the start of such
 words in $A\phi$, the $x^2$ at the start of $v$ lies at the start
 of a word from $A\phi$ in the decomposition of $u$. By the maximality of $v$
and since $z=u_0uu_1=v_0vv_1$ we get $u_0=v_0$.
  Since all
 words in $A\phi$ have the same length, $u$ and $v$ must also finish at the
 end of some word from $A\phi$. By the maximality of $u$ and $v$ we get $u=v$.

\noindent (ii) The result follows from (i).
% Notice that $z_0$
% and
% $z_n$ may be empty, and if $u_1,\ldots, u_n$ are non empty then also
% $z_1,\ldots, z_{n-1}$ are non empty.

\noindent (iii) From the definition of $\Gamma(\pres{x,y}{\rel{R}\phi})$, an
edge has for initial and terminal vertices words $t$ and $s$ in $\{x,y\}^*$
where one is obtained from the other
by a
single application of a defining relation in $\rel{R}\phi$, that is,
$s = w_0 (\ell\phi) w_1$ and $t = w_0(r\phi)w_1$,
with $(\ell,r)\in\rel{R}$. Notice that $\ell\phi, r\phi\in(A\phi)^*$. So,
we can consider maximal factors $p$ and $q$ in $(A\phi)^*$ having as factors
$\ell\phi$ and $r\phi$, respectively, in such a way that
\begin{align*}
s &= z_1 (p\phi) z_2, \\
t &= z_1(q\phi)z_2.
\end{align*}
The words $p$ and $q$ are the initial and terminal vertices of an edge $\bbe$
in $\Gamma(\pres{A}{\rel{R}})$. The result follows as in the statement.
\end{proof}

\begin{lemma}\label{lemma:consecutive_edges_multihomogeneous}
A length-two path in $\Gamma(\pres{x,y}{\rel{R}\phi})$ has one of
the forms:
\begin{enumerate}%[(i)]
 \item $(z_0\cdot\bbe_1\phi\cdot z_2)\circ(z_0\cdot\bbe_2\phi\cdot z_2)$, and
$\bbe_1\circ\bbe_2$ is a path in $\Gamma(\pres{A}{\rel{R}})$; or
 \item $(z_0\cdot\bbe_1\phi\cdot z_1(\iota \bbe_2\phi)z_2)\circ
(z_0(\tau\bbe_1\phi)z_1\cdot \bbe_2\phi\cdot z_2)$; or
 \item $(z_0(\iota\bbe_2\phi)z_1\cdot \bbe_1\phi\cdot
z_2)\circ(z_0\cdot\bbe_2\phi\cdot z_1(\tau \bbe_1\phi)z_2)$,
\end{enumerate}
for $z_0,z_1,z_2\notin (A\phi)^+$ and $z_1$ non-empty, and $\bbe_1,\bbe_2$
edges in $\Gamma(\pres{A}{\rel{R}})$.
\end{lemma}
\begin{proof}
 By
\fullref{Lemma}{lemma:properties_derivation_graph_xy}~(iii), the path of
consecutive edges has the form $(w_1\cdot(\bbe_1\phi)\cdot
w_2)\circ(z_1\cdot(\bbe_2\phi)\cdot z_2)$. In the conditions of that lemma the
words $\tau\bbe_1\phi$ and $\iota\bbe_2\phi$ are maximal factors in $(A\phi)^*$
of the same word $w_1(\tau\bbe_1\phi)w_2=z_1(\iota\bbe_2\phi)z_2$. From
\fullref{Lemma}{lemma:properties_derivation_graph_xy}~(i) we have three
possible cases. Each of the cases corresponds to one the possible cases (i),
(ii) and (iii) of the statement.
\end{proof}

\begin{lemma}\label{lemma:direct_part_fdt_multihomog}
 Let $\bbp$ be a non-empty (closed) path in
$\Gamma(\pres{x,y}{\rel{R}\phi})$. Then
\begin{equation}\label{path_ripas}
 \bbp \sim_0 \bbp_1\circ \bbp_2 \circ \cdots \circ\bbp_k,
 \end{equation}
for some $k\in \mathbb{N}$, and where  each $\bbp_i$ has the form
\[
w_0(\tau \bbq_1\phi) w_1 \cdots (\tau \bbq_{i-1}\phi) w_{i-1} \cdot
(\bbq_i\phi)\cdot w_i (\iota\bbq_{i+1}\phi) \cdots w_{k-1}(\iota\bbq_k\phi)w_k,
\]
for some (closed) path $\bbq_i$ in
$\Gamma(\pres{A}{\rel{R}})$, with $w_0,\ldots,w_k\notin (A\phi)^*$ and
$w_1,\ldots,w_{k-1}$ non-empty.
\end{lemma}
\begin{proof}
The key observation to prove the lemma is to note that in
\fullref{Lemma}{lemma:consecutive_edges_multihomogeneous} the paths on items
(ii) and (iii) are $\sim_0$-homotopic. Indeed, this is a consequence of
disjoint derivations.

By \fullref{Lemma}{lemma:properties_derivation_graph_xy}~(ii) the initial
vertex of $\bbp$ can be factorized in the form \[w_0
u_1 w_1 \cdots w_{k-1} u_k w_k,\] where each $u_i$ is a maximal factor
in $(A\phi)^*$ and each $w_i$ has no factor in $(A\phi)^*$. Notice, also by the
same lemma,  that any edge in $\bbp$ has a factorization of the form
$z_1\cdot(\bbe\phi)\cdot z_2$, for some edge $\bbe$ in
$\Gamma(\pres{A}{\rel{R}})$, where $\iota\bbe\phi$ and $\tau\bbe\phi$ are
maximal factors in $(A\phi)^*$ of the corresponding initial and terminal
vertices of $\bbe\phi$.

Since $w_i$'s have no factor in $(A\phi)^*$, no relation from $\rel{R}\phi$ is
going to
be applied to them, and hence they are fixed in this path. Also, for each
$i\in\{1,\ldots,k\}$ we can identify $B_i$ as the set of edges in $\bbp$ where
the relation is applied to a word between $w_{i-1}$ and $w_i$.

From \fullref{Lemma}{lemma:consecutive_edges_multihomogeneous} any two
consecutive edges $\bbe_i\circ\bbe_j$, with $\bbe_i\in B_i$ and $\bbe_j\in
B_j$, and $i> j$, we get $\bbe_i\circ\bbe_j\sim_0\bbe_j\circ\bbe_i$ by
disjoint derivations.

Consequently, taking $i=1$ we can group together all edges in $B_1$ by finding
a $\sim_0$-homotopic path in which all those edges are at the beginning of the
path. Proceeding in this way with the remaining edges we get the intended
result.
\end{proof}

The following result can be viewed as a generalization of Lemma~2.3 in
\cite{kobayashi_onerel}.

\begin{proposition} \label{prop:multi_fdt}
The monoid $M$ is \fdt\ if and only if the monoid
$N$ is \fdt.
\end{proposition}

\begin{proof}
Suppose that $M$ is \fdt\ and let $\mathcal{C}$ be  a finite homotopy base
(of closed paths) for $\Gamma(\pres{A}{\rel{R}})$. We shall see that the finite
set
$\mathcal{C}\phi=\{\bbp\phi : \bbp\in \mathcal{C}\}$ of
closed paths is a homotopy base for $\Gamma(\pres{x,y}{\rel{R}\phi})$ .

Let $\bbp$ be a non-empty closed path in
$\Gamma(\pres{x,y}{\rel{R}\phi})$. By
\fullref{Lemma}{lemma:direct_part_fdt_multihomog} there are
closed paths $\bbq_i$, for $i=1,\ldots,k$, in $\Gamma(\pres{A}{\rel{R}})$ such
that
\[
\bbp \sim_0 \bbp_1\circ \bbp_2 \circ \cdots \circ\bbp_k,
\]
and each $\bbp_i =w_i\cdot\bbq_i\phi\cdot w'_i$, for some $w_i,w_i'\in
\{x,y\}^*$.

Since $\mathcal{C}$ is a homotopy base, for each
$i=1,\ldots,k$, we have $\bbq_i\sim_{\mathcal{C}}1_{\iota(\bbq_i)}$. Thus
$\bbp_i\sim_{\mathcal{C}\phi} 1_{\iota(\bbp_i\phi)}$ by
\fullref{Lemma}{lemma:preserve_homotopy_multihomogeneous}, for all
$i=1,\ldots,k$, which in turn implies
that $\bbp\sim_{\mathcal{C}\phi} 1_{\iota(\bbp\phi)}$.  Consequently,
$\mathcal{C}\phi$ is
a finite homotopy base for $\Gamma(\pres{x,y}{\rel{R}\phi})$,  and so $N$ is
\fdt.

Conversely, suppose that $N$ is \fdt, and let $\mathcal{E}$ be a finite
homotopy base (of closed paths) for
$\Gamma(\pres{x,y}{\rel{R}\phi})$. Again by
\fullref{Lemma}{lemma:direct_part_fdt_multihomog},  for each
$\bbp$ in $\mathcal{E}$, there are closed paths $\bbq_1,\ldots ,\bbq_k$ in
$\Gamma(\pres{A}{\rel{R}})$, such that
\[ \bbp \sim_0 \bbp_1\circ\bbp_2 \circ \cdots \circ\bbp_k,
 \] and  $\bbp_i =w_i\cdot\bbq_i\phi\cdot w'_i$, for some $w_i,w_i'\in
\{x,y\}^*$. Denote by
$\mathcal{C}$ the finite set of all $\bbq_i$'s, for all $\bbp\in
\mathcal{E}$, and let $\mathcal{C}\phi$ denote the set $\{\bbp\phi : \bbp\in
\mathcal{C}\}$. From the definition of $\mathcal{C}\phi$, the paths $\bbp_i$
satisfy $\bbp_i\sim_{\mathcal{C}\phi}1_{\iota(\bbp_i)}$, and so each closed
path $\bbp$ in $\mathcal{E}$ satisfies
$\bbp\sim_{\mathcal{C}\phi}1_{\iota(\bbp)}$.  Thus,  the set
$\mathcal{C}\phi $ generates the homotopy relation
${\sim_{\mathcal{E}}}$, and therefore  also
$\mathcal{C}\phi$ is a finite homotopy base of
$\Gamma(\pres{x,y}{\rel{R}\phi})$.
Note that $\mathcal{C}\phi$ is a set of closed paths in $\Delta$ and that
$\mathcal{C}\phi\psi=\mathcal{C}$.

% As
% noticed before, this subgraph can be idenfied with the derivation graph
% $\Gamma(\pres{A}{\rel{R}})$ through the mapping $\phi$. Hence, for any closed
% path $\bbp$ in $\Gamma(\pres{A}{\rel{R}})$, if the closed path $\bbp\phi$,
% contained the subgraph with vertices from $(A\phi)^*$, satisfies
% $\bbp\phi\sim_{\mathcal{C}\phi}1_{\iota(\bbp\phi)}$, then also
% $\bbp\sim_{\mathcal{C}}1_{\iota(\bbp)}$.

To conclude the proof, let $\bbp$ be a closed path in
$\Gamma(\pres{A}{\rel{R}})$. Since $\mathcal{C}\phi$ is a homotopy base of
$\Gamma(\pres{x,y}{\rel{R}\phi})$, we have
$\bbp\phi\sim_{\mathcal{C}\phi}1_{\iota(\bbp\phi)}$. By
\fullref{Lemma}{lemma:preserve_homotopy_reverse_multihomogeneous}, we get
$\bbp\phi\psi\sim_{\mathcal{C}\phi\psi}1_{\iota(\bbp\phi\psi)}$. Since
$\bbp\phi\psi=\bbp$ we get
$\bbp\sim_{\mathcal{C}}1_{\iota(\bbp)}$  as required.
Therefore,
$\mathcal{C}$ is a finite homotopy base of $\Gamma(\pres{A}{\rel{R}})$, and so
$M$ is \fdt.
\end{proof}

\begin{proposition}
\label{prop:homogenousembeddingauto}
The monoid $M$ is \auto\ (respectively,
\biauto) if and only if the monoid $N$ is
\auto\ (respectively, \biauto).
\end{proposition}
\begin{proof}
% For convenience, let $M$ be the monoid $\pres{A}{\rel{R}}$ and $N$ be
% the monoid $\pres{x,y}{\rel{R}\phi}$.
We will prove the result for
\biauto; the result for \auto\ follows by considering
multiplication only on one side.

Suppose that $N$ is \biauto. By
\fullref{Proposition}{prop:autochangegen}, there is a biautomatic
structure $(\{x,y\},K)$ for $N$. By
\fullref{Proposition}{prop:homogenousembedding}, words over $\{x,y\}$
representing elements of the image under $\phi$ of $M$ are precisely
those in $(A\phi)^*$. So $K \cap (A\phi)^*$ must map onto the image of
$M$.

The map $\phi$ is a rational relation, since
\[
\phi = \gset{(a_i,x^2y^ixy^{m+1−i})}{a_i \in A}^*.
\]
Thus its converse $\phi^{-1}$ is also a rational relation. Let $L = K\phi^{-1}$; note that $L \subseteq A^*$. Since
$\phi^{-1}$ is a rational relation and $K$ is regular, it follows that $L$ is also regular. Since $K \cap (A\phi)^*$
must map onto the image of $M$, the language $L$ maps onto $M$. For any $a \in A \cup \{\emptyword\}$,
\begin{align*}
(u,v) \in L_a &\iff u \in L \land v \in L \land ua =_M v \\
&\iff (\exists u',v' \in K)\parens[\big]{(u,u') \in \phi \land (v,v') \in \phi \land u'(a\phi) =_N v'} \\
&\iff (\exists u',v' \in K)\parens[\big]{(u,u') \in \phi \land (v,v') \in \phi \land (u',v') \in K_{a\phi}} \\
&\iff (u,v) \in \phi \circ K_{a\phi} \circ \phi^{-1};
\end{align*}
thus $L_a = \phi \circ K_{a\phi} \circ \phi^{-1}$. Since $(\{x,y\},K)$ is an automatic structure for $N$, each relation
$K_{a\phi}$ is rational. Since $\phi$ and $\phi^{-1}$ are rational relations, it follows that $L_a$ is a rational
relation. Since $M$ is homogeneous, $(u,v) \in L_a \implies \bigl||u|-|v|\bigr| \leq 1$, and so $L_a^\$$ and
${}^\$L_a$ are regular by \fullref{Proposition}{prop:rationalbounded}. Symmetrical reasoning shows that ${}_aL^\$$ and
${}_a^\$L$ are regular. Hence $(A,L)$ is a biautomatic structure for $M$.

Now suppose that $M$ is \biauto. By
\fullref{Proposition}{prop:autochangegen}, there is a biautomatic
structure $(A,L)$ for $M$. The aim is to construct a biautomatic structure for $N$.

Let $J = \{x,y\}^+ - \{x,y\}^*A^+\phi\{x,y\}^*$; so $J$ consists of all non-empty words over $\set{x,y}^*$ that do not
contain subwords equal to elements $M\phi$. Notice in particular that $\emptyword \notin J$ and that $J$ is closed under
taking non-empty subwords. Note further that $J$ is regular. For future use, let $\Delta = \gset{(z,z)}{z \in J}$; note
that $\Delta$ is a rational relation by \fullref{Proposition}{prop:idrational}.

Consider a word $w \in \set{x,y}^+$. By \fullref{Lemma}{lemma:properties_derivation_graph_xy}, $w$ can be uniquely
factored into an alternating product of words from $J$ and words from $A^+\phi$. That is,
\begin{equation}
  \label{eq:homogenousembeddingauto1}
  w = z_0u_1z_1\cdots z_{k-1}u_kz_k,
w\end{equation}
where each $u_i$ lies in $A^+\phi$ and each $z_i$ lies in $J$, except that $z_0$ and $z_k$ may also be
$\emptyword$. Note that each $u_i$ is equal to some word in $L\phi - \set{\emptyword}$. The idea is to build an automatic structure where
the language of representatives consists of alternating products of words from $J$ and words from $L\phi$. That is, the language will consist of words   \eqref{eq:homogenousembeddingauto1} where each $u_i$ lies in $L\phi - \set{\emptyword}$.

More formally, let
\[
K = (J \cup \set{\emptyword})\parens[\big]{(L\phi \cap \set{x,y}^+)J}^*(J \cup \set{\emptyword});
\]
note that $K$ is regular since $J$ is regular and $\phi$ is a homomorphism, and that every element of $N$ is equal to
some word in $K$ by the reasoning in the previous paragraph.

Now let $w \in K$ and factor $w$ as \eqref{eq:homogenousembeddingauto1} where each $u_i$ lies in
$L\phi - \set{\emptyword}$ and each $z_i$ lies in $J$, except that $z_0$ and $z_k$ may also be
$\emptyword$.

Consider right-multiplication of $w$ by a generator $x$. Since $z_kx$ cannot contain a subword from $A^+\phi$ (since
$z_k$ contains no such subword, and such subwords do not end with symbols $x$), the words in $L$ to which $wx$ is equal
are precisely words of the form $z_0u'_1z_1\cdots z_{k-1}u'_kz_kx$, where each $u'_i$ is any word in $L\phi$ that is
equal to $u_i$. (Recally that defining relations in $\rel{R}\phi$ only apply to words in $A^+\phi$; thus the subwords
$z_i$ are fixed.) Hence
\[
  K_x = (\Delta \cup \set{(\emptyword,\emptyword)})\parens[\big]{(\phi^{-1} \circ L_\emptyword \circ \phi)\Delta}^*\parens[\big]{L_\emptyword\phi(\emptyword,x) \cup (\emptyword,x)}
\]
is a rational relation.

Now consider right-multiplication of $w$ by a generator $y$. The $z_ky$ may end with a subword $x^2y^ixy^{m-i+1} \in A\phi$, and
thus it is necessary to distinguish three cases:
\begin{itemize}
\item $z_k = tx^2y^ixy^{m-i}$, where $t \neq \emptyword$. In this case, the factorization of $wy$ into an alternating
  product of subwords from $J$ and $A^+\phi$ is
  \[
    w = z_0u_1z_1\cdots z_{k-1}u_ktx^2y^ixy^{m-i};
  \]
  note that $t \in J$ since $J$ is closed under taking subwords. The words in $L$ to which $wy$ is equal are precisely
  words of the form $z_0u'_1z_1\cdots z_{k-1}u'_kz_kv$, where each $u'_i$ is any word in $L\phi$ that is equal to $u_i$,
  and $v$ is any word in $L\phi$ that is equal to $x^2y^ixy^{m-i} = a_i\phi$. Let $H_i = \gset{p \in L}{p = a_i}$; then
  $H_i$ is regular by \cite[Proposition 3.1]{campbell_autsg}. (In fact, since $M$ is homogeneous, $H$ can contain only
  generators from $A$ and so must be finite.) Let
  \[
    K^{(1)}_y = (\Delta \cup m
\set{(\emptyword,\emptyword)})\parens[\big]{(\phi^{-1} \circ L_\emptyword \circ
\phi)\Delta}^*\bigcup_{i=1}^m\parens[\big]{\set{x^2y^ixy^{m-i}} \times H_i\phi}.
  \]
  Since $H_i\phi$ is regular, $\set{x^2y^ixy^{m-i}} \times H_i\phi$ is a rational relation by
  \fullref{Proposition}{prop:intersectionwithregular} and so $K^{(1)}_y$ is a rational relation since it is a
  concatenation of rational relations.

  Then $K^{(1)}_y$ is a rational relation that describes right-multiplication by a generator $y$ in this case.

\item $z_k = x^2y^ixy^{m-i}$. In this case, the factorization of $wy$ into an alternating
  product of subwords from $J$ and $A^+\phi$ is
  \[
    w = z_0u_1z_1\cdots z_{k-1}u_kx^2y^ixy^{m-i};
  \]
  note that the last factor is $u_kx^2y^ixy^{m-i} \in A^+\phi$. The words in $L$ to which $wy$ is equal are precisely
  words of the form $z_0u'_1z_1\cdots z_{k-1}v$, where each $u'_i$ is any word in $L\phi$ that is equal to $u_i$, and
  $v$ is any word in $L\phi$ that is equal to $u_kx^2y^ixy^{m-i}$. That is, $v$ can be any word such that
  $(u_k,v) \in L_{a_i}\phi$. Let
  \begin{align*}
    K^{(2)}_y ={}& (\Delta \cup \set{(\emptyword,\emptyword)})\parens[\big]{(\phi^{-1} \circ L_\emptyword \circ \phi)\Delta}^*\\
    &\qquad\cdot \bigcup_{i=1}^n\parens[\big]{(\phi^{-1}\circ L_{a_i} \circ \phi)(x^2y^ixy^{m-i},\emptyword)}.
  \end{align*}
  Then $K^{(2)}_y$ is a rational relation that describes right-multiplication by a generator $y$ in this case.

\item $z_k$ does not end with a suffix of the form $x^2y^ixy^{m-i}$. The words in $L$ to which $wy$ is equal are precisely
  words of the form $z_0u'_1z_1\cdots z_{k-1}u'_kz_ky$, where each $u'_i$ is any word in $L\phi$ that is equal to $u_i$. Let
  \begin{align*}
    K^{(3)}_y ={}& (\Delta \cup \set{(\emptyword,\emptyword)})\parens[\big]{(\phi^{-1} \circ L_\emptyword \circ \phi)\Delta}^*(\phi^{-1} \circ L_\emptyword \circ \phi) \\
    &\quad \cdot \gset[\big]{(v,v)}{v \in J - \set{x,y}\bigcup_{i=1}^m
x^2y^ixy^{m-i}}(\emptyword,y).
  \end{align*}
  Then $K^{(2)}_y$ is a rational relation that describes right-multiplication by a generator $y$ in this case.
\end{itemize}
Thus $K_y = K^{(1)}_y \cup K^{(2)}_y \cup K^{(3)}_y$ is a rational relation.

Since $N$ is homogeneous, $(u,v) \in K_t \implies \bigl||u|-|v|\bigr| \leq 1$, and so $K_t^\$$ and ${}^\$K_t$ are
regular by \fullref{Proposition}{prop:rationalbounded}.

Similar reasoning shows that ${}_tK^\$$ and ${}_t^\$K$. Hence $(\set{x,y},K)$ is a biautomatic structure for $N$.
\end{proof}

% Recall that if a monoid is defined by a homogeneous
% presentation
% $\pres{A}{\rel{R}}$, then the alphabet $A$ represents a unique minimal
% generating set, and hence any generating set must contain it.

\begin{proposition} \label{prop:homogenousembedding_fcrs}
If $\pres{B}{\rel{Q}}$ is a finite presentation for $M$, and so
$A\subseteq B$, then
\[
\mathcal{P}=\pres{x,y,B}{\rel{Q},\ (a\phi,a)\ (\forall a\in A)}
\]
is a finite presentation defining $N$. Moreover,
the presentation $\pres{B}{\rel{Q}}$ is complete if and only if the
presentation $\cal P$ is complete. Thus, if $M$ is
\fcrs\ then  $N$ is \fcrs.
\end{proposition}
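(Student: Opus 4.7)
I address the three clauses in turn. For the first --- that $\mathcal{P}$ presents $\pres{x,y}{\rel{R}\phi}$ --- I would use Tietze transformations. Starting from $\pres{x,y}{\rel{R}\phi}$, for each $a\in A$ I adjoin a new generator symbol $a$ together with the defining relation $a\phi=a$, a valid redundant-generator move. In this extended presentation one has $w=w\phi$ for any $w\in A^\ast$ (by applying the new relations letter-by-letter); so for each $(u,v)\in\rel{R}$ the relation $u=v$ is a consequence of $(u\phi,v\phi)\in\rel{R}\phi$ together with the new relations, allowing $\rel{R}$ to be adjoined, and then the relations $\rel{R}\phi$ become redundant and may be dropped. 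This produces $\pres{x,y,A}{\rel{R},\ a\phi=a\ (\forall a\in A)}$; one more Tietze step replaces $\pres{A}{\rel{R}}$ by the equivalent $\pres{B}{\rel{Q}}$, yielding $\mathcal{P}$.

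For the second clause --- the completeness equivalence --- I orient the new rules as $a\phi\to a$ and keep the given orientation of $\rel{Q}$. If $\rel{Q}$ is complete, termination of $\mathcal{P}$ follows from a lexicographic argument: each $\rel{Q}$-rule preserves $|w|_x+|w|_y$ (both sides lie in $B^\ast$) while each application of $a\phi\to a$ strictly decreases it; and on words of fixed $(x,y)$-content, an extension to $(B\cup\{x,y\})^\ast$ of a reduction order for $\rel{Q}$, applied factor-by-factor to the maximal $B^\ast$-blocks of $w$, is strictly decreased by every $\rel{Q}$-step. Confluence is handled by a critical-pair analysis: $a\phi$-rules and $\rel{Q}$-rules live on disjoint alphabets and so cannot overlap, and two $a\phi$-rules cannot overlap either, because $A\phi=\{x^2y^ixy^{n+1-i}:1\leq i\leq n\}$ consists of words of equal length $n+4$ in which the factor $xx$ appears only as an initial segment, so no proper suffix of one $a\phi$ is a prefix of another. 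Hence all critical pairs of $\mathcal{P}$ are inherited from $\rel{Q}$, and they resolve precisely when $\rel{Q}$ is confluent.

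For the converse implication, any $\mathcal{P}$-reduction starting from a word in $B^\ast$ remains in $B^\ast$ (no $a\phi$-rule can apply there) and uses only $\rel{Q}$-rules, so termination and confluence of $\mathcal{P}$ restrict directly to $\rel{Q}$. The \fcrs\ equivalence now follows: if $\pres{A}{\rel{R}}$ is \fcrs\ it has a finite complete presentation which, by a routine completeness-preserving Tietze adjustment, may be taken with alphabet containing $A$, and the completeness equivalence just established delivers a finite complete $\mathcal{P}$ for $\pres{x,y}{\rel{R}\phi}$. The reverse direction is symmetric: a finite complete presentation of $\pres{x,y}{\rel{R}\phi}$ can be brought into $\mathcal{P}$-form via completeness-preserving Tietze moves, whence the $\rel{Q}$-part is a finite complete presentation of $\pres{A}{\rel{R}}$.

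The main obstacle is the termination step in the second clause: since $\rel{Q}$ is not assumed length-reducing, the two rule families cannot be combined by length alone. The structural observation that rescues the order is that the content measure $|w|_x+|w|_y$ cleanly separates the two families, making the lexicographic combination well-founded. The triviality of the critical-pair analysis for the new rules rests similarly on a code-theoretic fact: $A\phi$ is a bifix code whose members do not overlap each other, which prevents any new obstruction to confluence from arising.
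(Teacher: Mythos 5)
Your proof establishes the same three claims, but the middle step---the completeness equivalence---is argued by a genuinely different route than the paper's. The paper derives termination of $\mathcal{P}$ from the quasi-commutation of $\imreduces_{\rel{E}}$ over $\imreduces_{\rel{Q}}$ (where $\rel{E}=\{(a\phi,a):a\in A\}$) together with Bachmair--Dershowitz's modularity theorem, and derives confluence from the commutation of $\reduces_{\rel{E}}$ with $\reducesrev_{\rel{Q}}$ together with the commutation lemma \cite[Lemma~2.7.10]{baader_termrewriting}, after checking separately that $\rel{E}$ has no critical pairs. You instead build a termination order directly---lexicographic in the $\{x,y\}$-content $|w|_x+|w|_y$ (strictly decreased by $\rel{E}$, preserved by $\rel{Q}$) followed by a $\rel{Q}$-order applied blockwise---and you get confluence from the critical-pair lemma plus Newman, after the same observation that $\rel{E}$-rules cannot overlap with $\rel{Q}$-rules or with themselves (your bifix/no-overlap argument for $A\phi$ is the same observation the paper makes when showing $\rel{E}$ is confluent). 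Both routes are sound; yours is more self-contained and avoids citing the two modularity theorems, whereas the paper's approach keeps the argument shorter and cites standard results. Your first and third clauses match the paper's: the Tietze argument is the same, and the final \fcrs\ equivalence is left at essentially the same level of detail in both (your invocation of a ``completeness-preserving Tietze adjustment'' is no more explicit than the paper's bare ``Hence''). One small caveat: your lexicographic order as stated should really have the whole $\{x,y\}$-pattern, not just the count $|w|_x+|w|_y$, frozen in the second coordinate before comparing $B^*$-blocks; this is automatic along any $\rel{Q}$-reduction but deserves a word if you want the order itself, rather than the reduction relation, to be well-founded.
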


\begin{proof}
Using Tietze transformations we obtain from the presentation
$\pres{x,y}{\rel{R}\phi}$ a new presentation for
$N$ as follows: for each $a\in A$, insert  a generator $a$ and
a relation $(a\phi,a)$,
thus obtaining a Tietze equivalent presentation
\[
\pres{x,y,A}{\rel{R}\phi,\ (a\phi,a)\ (\forall a\in A)}.
\]
Since $\phi$ is a homomorphism, identifying each symbol $a$ with the word
$a\phi$, performing these substitutions on the words from $\rel{R}\phi$, we
obtain another Tietze equivalent presentation defining the same monoid:
\[
\pres{x,y,A}{\rel{R},\ (a\phi,a)\ (\forall a\in A)}.
\]

As it is possible to obtain from the presentation $\pres{A}{\rel{R}}$
the presentation $\pres{B}{\rel{Q}}$ using finitely many Tietze
transformations, we can obtain from the presentation
$\pres{x,y,A}{\rel{R},\ (a\phi,a)\ (\forall a\in A)}$ the presentation
$\cal P$ by using the same Tietze transformations.

Suppose that $\pres{B}{\rel{Q}}$ is also complete. Observe that
$\rel{Q}$ relates words from the alphabet $B$, and that a relation
from the set $\rel{E}=\{(a\phi,a):a\in A\}$ has left-hand side in
$\{x,y\}^*$ and right-hand side in $A$. Thus, if
$w\imreduces_{\rel{Q}} w' \imreduces_{\rel{E}} w''$, we can find
$\overline{w}$ such that $w\imreduces_{\rel{E}} \overline{w}
\imreduces_{\rel{Q}} w''$. Indeed, since $w' \imreduces_{\rel{E}} w''$ the word
$w'$ has a factor from the alphabet $\{x,y\}^*$. Since $w\imreduces_{\rel{Q}}
w'$ the word $w'$ is obtained from $w$ by changing some factor in $B^*$. Thus,
the word $w$ also contains the left-hand side of the relation from $\rel{E}$
applied to $w'$. This means that $w$ has both left-hand sides of the relations
being applied, and those left-hand sides do not overlap. So,  we can
alternatively apply first the relation from $\rel{E}$, obtaining a word
$\overline{w}$, and then apply the relation from $\rel{Q}$, obtaining the word
$w''$.

Hence, $\imreduces_{\rel{E}}$
quasi-commutes over $\imreduces_{\rel{Q}}$, that is,
${\imreduces_{\rel{Q}}} \circ {\imreduces_{\rel{E}}} \subseteq
{\imreduces_{\rel{E}}} \circ {\reduces_{\rel{Q}\cup\rel{E}}}$. By
\cite[Theorem 1]{BacDers86}, the rewriting system $(\{x,y\} \cup A,
\rel{Q}\cup\rel{E})$ is terminating if, and only if, both $\rel{Q}$
and $\rel{E}$ are terminating. By assumption $\rel{Q}$ is terminating
and from where $\rel{E}$ is also terminating by length-reduction.

As before, since $\rel{Q}$ relates words from the alphabet $B$, and that
relations from  $\rel{E}$ have left-hand side in
$\{x,y\}^*$ and right-hand side in $A$,  we can also deduce that whenever
$w'\reducesrev_{\rel{Q}} w
\reduces_{\rel{E}} w''$, there exists $\overline{w}$ such that
$w'\reduces_{\rel{E}} \overline{w} \reducesrev_{\rel{Q}}
w''$. Therefore, the relations $\reduces_{\rel{E}}$ and
$\reducesrev_{\rel{Q}}$ commute, that is, ${\reducesrev_{\rel{Q}}}
\circ{\reduces_{\rel{E}}} \subseteq
{\reduces_{\rel{E}}}\circ{\reducesrev_{\rel{Q}}}$. By \cite[Lemma
  2.7.10]{baader_termrewriting}, if $\rel{E}$ and
$\rel{Q}$ are confluent and $\reduces_{\rel{E}}$ and
$\reducesrev_{\rel{Q}}$ commute, then
$\rel{Q}\cup \rel{E}$ is also confluent. Since by assumption $\rel{Q}$
is confluent it remains to show that $\rel{E}$ is confluent.

Observe that two left-hand sides $a_i\phi$ and $a_j\phi$ of rules in $\rel{E}$ can overlap if and only if
$a_i\phi=a_j\phi$. Since $\phi$ is injective we get $a_i=a_j$. Therefore, $\rel{E}$ has no critical pairs and thus is
also confluent.

Conversely, suppose that $\cal P$ is a complete presentation. Since
$\pres{B}{\rel{Q}}$ is contained in $\cal P$, we deduce that
$\pres{B}{\rel{Q}}$ is terminating. Confluence also
holds from the fact that in $\cal P$ all critical pairs are resolved,
in particular, those arising from relations in $\rel{Q}$. Hence, each
resolution associated to relations from $\rel{Q}$, can only involve
relations from $\rel{Q}$, since left-hand sides of rules in $\rel{E}$ belong to
$\{x,y\}^*$. Therefore, $\pres{B}{\rel{Q}}$ is complete.
\end{proof}

It is unknown if the property \fcrs\ is preserved from $N$ to $M$.  If this were true, we would have an example
satisfying the conditions of \fullref{Question}{question:multi_fdt_non-fcrs}.

Combining Propositions~\ref{prop:multi_fdt}, \ref{prop:homogenousembeddingauto}, and
\ref{prop:homogenousembedding_fcrs}, we conclude the following:

\begin{corollary}
  \label{cor:from_homog_to_multihomog}
  For each of the properties \fcrs, (non-)\fdt, (non-)\auto\ and (non-)\biauto, the ($n$-ary) homogeneous monoid $M$ has
  that property if and only if the ($n$-ary) multihomogeneous monoid $N$ has that property.
\end{corollary}

\bibliographystyle{elsarticle-num}
\bibliography{\jobname}

\end{document}